\date{%\today
July 16, 2021}
\let\oldsection\section
\renewcommand\section{\setcounter{equation}{0}\oldsection}
\newtheorem{corollary}{Corollary}[section]
\newtheorem{theorem}{Theorem}[section]
\newtheorem{lemma}{Lemma}[section]
\newtheorem{proposition}{Proposition}[section]
\newtheorem{definition}{Definition}[section]
\newtheorem{remark}{Remark}[section]
\begin{document}
\title[$z$-weak solutions to the primitive equations]{Global well-posedness of $z$-weak solutions to the primitive equations without vertical diffusivity}
\thanks{$^*$Corresponding author}
\keywords{Primitive equations, global well-posedness, $z$-weak solutions, without vertical diffusivity.}
\subjclass[2010]{35Q86, 86A05, 86A10.}

\author[J. Li]{JinKai Li}
\address[J. Li]{South China Research Center for Applied Mathematics and Interdisciplinary Studies, School of Mathematical Sciences, South China Normal University, Guangzhou 510631, China}
\email{jklimath@m.scnu.edu.cn; jklimath@gmail.com}

\author[G. Yuan]{GuoZhi Yuan$^*$}
\address[G. Yuan]{South China Research Center for Applied Mathematics and Interdisciplinary Studies, School of Mathematical Sciences, South China Normal University, Guangzhou 510631, China}
\email{shenggaoxii@163.com}

\begin{abstract}
In this paper, we consider the initial boundary value problem in a cylindrical domain to the three dimensional primitive equations with full eddy viscosity in the momentum equations but with only horizontal eddy diffusivity in the temperature equation. Global well-posedness of $z$-weak solution is established for any such initial datum that itself and its vertical derivative belong to $L^2$. This not only extends the results in \cite{Cao5} from the spatially periodic case to general cylindrical domains but also weakens the regularity assumptions on the initial data which are required to be $H^2$ there.
\end{abstract}
\maketitle

%\newpage
\section{Introduction}
The primitive equations, derived from the Navier-Stokes equation by the hydrostatic approximation, play an important role in weather prediction, see the books \cite{Hal,Lew,Maj,Ped,Val,Was,Zen}. Mathematically, the primitive equations can be rigorously justified through taking the small aspect ratio limit to the Navier-Stokes equations defined on a thin domain, see \cite{Aze,Fur,Fur2,Gao,Jli,Jli2,Liu4,Pu}. The systemic mathematical analysis on the primitive equations was initiated by a series of papers \cite{Lio1,Lio2,Lio3} in 1990s, where the global existence of weak solutions was established; however, the uniqueness of weak solution is still open, even for the two-dimensional case, except for some special cases (see \cite{Bre1,Kuk1,Jli3}). Since then, the mathematical analysis on the primitive equations has been being extensively studied. The local well-posedness of strong solutions was established in \cite{Gui}, while the global existence of strong solutions for the two-dimensional case was proved in \cite{Bre2}. For the three-dimensional case, the global well-posedness with arbitrary large initial data was established in \cite{Cao7}, see also \cite{Kob,Kuk}. One can see \cite{Guo,Ju,Kuk1,Jli3,Med} for the well-posedness of the primitive equations with lower regular initial data and \cite{Gig1,Gig2,Hie1,Hie2} for the results in $L^p$ type spaces, see also \cite{Pet} for the solutions with Gevrey regularities. Recently, there are some global well-posed results of strong solutions to the coupled system of the primitive equations to the moisture system, see \cite{Cot,Guo1,Hit1,Hit2}. For results of the compressible primitive equations, one can see \cite{KAZ,WFC1,Liu1,Liu2,Liu3,WFC2}.

Note that all the systems considered in the papers mentioned in the above paragraph are assumed to have full viscosities and full diffusivities. However, due to the presence of strong turbulence mixing in the horizontal direction in the large scale atmosphere, the eddy viscosities and diffusivities in the horizontal and vertical directions are different. In particular, the horizontal viscosities and diffusivities are much stronger than the vertical ones.
Therefore, both physically and mathematically, it is important to consider the primitive equations with anisotropic dissipations. Towards this direction, the results in \cite{Cao1,Cao2,Cao3,Cao4,Cao5,Cao6} show that the anisotropic primitive equations with full or only horizontal viscosities are globally well-posed, as long as one still has either the horizontal or vertical diffusivities, see \cite{Saa} for the existence of time periodic solutions. Notably, as indicated in \cite{Tit1,Tit2,Won}, the inviscid primitive equations, with or without coupling to the temperature equation, may blow up in finite time. The results in \cite{Cao1,Cao2,Cao3,Cao4,Cao5,Cao6,Tit1,Tit2} indicate that the horizontal viscosity has a crucial effect on the global well-posedness of the primitive equations. The effect of the rotation on the life-span of solutions to the inviscid primitive equations was studied in \cite{Gho}.

In this paper, we continue the studies on the anisotropic primitive equations. More precisely, the main purpose of this paper is to establish the global well posed result of the $z$-weak solutions to the system without vertical eddy diffusivity. We focus on the case that with full viscosities but with only horizontal diffusivities, that is the following system:
\begin{equation}
\partial_t v+(v\cdot\nabla_H) v+w\partial_z v+\nabla_H p-\frac{1}{Re_1}\Delta_H v-\frac{1}{Re_2}\partial_z^2 v+fk\times v=0,\label{1}
\end{equation}
\begin{equation}
\partial_z p+T=0,
\label{2}
\end{equation}
\begin{equation}
\nabla_H\cdot v+\partial_z w=0,
\label{3}
\end{equation}
\begin{equation}
\partial_t T+v\cdot\nabla_H T+w\partial_z T-\frac{1}{R_T}\Delta_H T=0,
\label{4}
\end{equation}
here the horizontal velocity $v=(v_1,v_2)$, the vertical velocity $w$, the temperature $T$, and the pressure $p$ are the unknowns,  $f$ is the Coriolis parameter, and $k\times v=(-v_2,v_1)$. $Re_1$ and $Re_2$ are positive constants representing the horizontal and vertical Reynolds numbers, respectively, and $R_T$ is a positive constant. In this paper, we use $\nabla_H =(\partial_{x},\partial_{y})$ and $\Delta_H=\partial_x^2+\partial_y^2$ to denote the horizontal gradient and Laplacian, respectively.

We consider the above system in a cylindrical domain
\begin{equation*}
\Omega=M\times(-h,0),
\end{equation*}
where $M$ is a given smooth bounded domain in $\mathbb{R}^2$ and $h>0$ is a given positive number. The boundary of $\Omega$ consists of the surface $\Gamma_u$, the bottom $\Gamma_b$, and the side $\Gamma_s$:
$$
\Gamma_u=M\times \{0\},\quad
\Gamma_b=M\times \{-h\},\quad
\Gamma_s=\partial M\times (-h,0).
\label{7}
$$

We complement system \eqref{1}--\eqref{4} with the initial condition
\begin{equation}
(v,T)|_{t=0}=(v_0,T_0),\label{8}
\end{equation}
and the following boundary conditions:
\begin{eqnarray}
\textrm{on}~\Gamma_s:&&\partial_n T=-\alpha_TT,~v=0,
\label{9}\\
\textrm{on}~\Gamma_u:&&\partial_z v=0,~w=0,\label{10}\\
\textrm{on}~\Gamma_b:&&\partial_z v=0,~w=0,\label{11}
\end{eqnarray}
where $n$ is the unit outward normal vector on the boundary $\Gamma_s$. For simplicity, we assume that the $\alpha_T$ is a nonnegative constant; however, with the similar arguments as in this paper, the results continue to hold if it varies on the boundary or depends also on the time variable, as long as it is nonnegative and suitably smooth.

\begin{remark}
In general, one can consider the following nonhomogeneous boundary conditions to system \eqref{1}--\eqref{4}:
\begin{equation*}
\begin{aligned}
&\textrm{on}~\Gamma_s:~~~~\partial_n T=\alpha_T(T_{s}-T),~v=0,\\
&\textrm{on}~\Gamma_u:~~~~\partial_z v=-\alpha_v\tau(x,y;t),~w=0,\\
&\textrm{on}~\Gamma_b:~~~~\partial_z v=0,~w=0,
\end{aligned}
\end{equation*}
where $\alpha_v$ is a nonnegative constant or a nonnegative suitably smooth function on the boundary $\Gamma_u$, $\tau(x,y,t)$ is the wind stress on the ocean surface, and $T_{s}(x,y,z,t)$ is the typical temperature on $\Gamma_s$. Assume that $\tau$ and $T_{s}$ are sufficiently smooth, uniformly bounded, and satisfy the following compatibility conditions:
\begin{equation}
\begin{aligned}
&\tau=0~\textrm{on}~\Gamma_s,~~\textrm{if}~\alpha_v>0, \\
&\partial_zT_s=0~\textrm{on}~\partial M\times \{-h\}~\textrm{and}~\partial M\times \{0\},~~\textrm{if}~\alpha_T>0.\label{12}
\end{aligned}
\end{equation}
As will be shown in the Appendix, due to \eqref{12}, one can homogenize the above nonhomogeneous boundary conditions to the corresponding homogeneous ones at the cost of dealing with some additional linear terms in the resulting equations. These additional linear term will bring no technical difficulties if we assume that both $\tau$ and $T_s$ are sufficiently smooth. Therefore, even though we only focus on the case that $\tau=0$ and $T_s=0$ in this paper, the same results still hold in the general case that $\tau\neq0$ and $T_s\neq0$, as long as they are suitably smooth.
\end{remark}

By \eqref{3} and the boundary conditions of $w$ in \eqref{10} and \eqref{11}, one gets
\begin{equation*}
\nabla_H\cdot\left(\int_{-h}^0v dz\right)=0 
\end{equation*}
and
\begin{equation}
w(x,y,z,t)=-\int_{-h}^z\nabla_H\cdot v(x,y,\xi,t)d\xi.\label{14}
\end{equation}
Integrating equation \eqref{2} with respect to $z$ yields
\begin{equation}
p(x,y,z,t)=-\int_{-h}^z T(x,y,\xi,t)d\xi+p_s(x,y,t) \label{15}
\end{equation}
for some unknown function $p_s(x,y,t)$.

Based on the above, one can rewrite system \eqref{1}--\eqref{4} as:
\begin{equation}
\begin{aligned}
\partial_t v+(v\cdot\nabla_H) v&-\left(\int_{-h}^z\nabla_H\cdot vd\xi\right)\partial_z v+\nabla_H p_s(x,y,t)\\
&-\nabla_H\left(\int_{-h}^zTd\xi\right)+fk\times v-\frac{1}{Re_1}\Delta_H v-\frac{1}{Re_2}\partial_z^2 v=0,\label{16}
\end{aligned}
\end{equation}
\begin{equation}
\nabla_H\cdot\left(\int_{-h}^0v dz\right)=0,\label{13}
\end{equation}
\begin{equation}
\partial_t T+v\cdot\nabla_H T-\left(\int_{-h}^z\nabla_H\cdot vd\xi\right)\partial_z T-\frac{1}{R_T}\Delta_H T=0,\label{17}
\end{equation}
subject to the following boundary conditions
\begin{equation}
\begin{aligned}
\textrm{on}~\Gamma_s:~~~~\partial_n T=-\alpha_TT,~v=0,
\label{73}
\end{aligned}
\end{equation}
\begin{equation}
\textrm{on}~\Gamma_u\cup\Gamma_b:~~~~\partial_z v=0.
\label{75}
\end{equation}

Now, we define the following two spaces $\mathcal H$ and $\mathcal V$ which will be used throughout this paper:
\begin{equation*}
\mathcal H:=\left\{\varphi\in L^2(\Omega)~\Bigg|~n\cdot\left(\int_{-h}^0\varphi dz\right)=0\mbox{ on }\partial M,  \nabla_H\cdot\left(\int_{-h}^0 \varphi dz\right)=0\ \  \textrm{in}~\Omega\right\},
\end{equation*}
\begin{equation*}
\mathcal V:=\left\{\varphi\in H^1(\Omega)~\Bigg|~\varphi=0~\textrm{on}~\Gamma_s,~\nabla_H\cdot\left(\int_{-h}^0\varphi dz\right)=0~\textrm{in}~\Omega\right\}.
\end{equation*}
As usual, we use $L^q(\Omega)$ and $W^{m,q}(\Omega)$ to denote the standard Lebesgue and Sobolev spaces, respectively. For $q=2$, we use $H^m$ instead of $W^{m,2}$, and we still use $L^q$ and $H^m$ to denote the $N$ product spaces $(L^q)^N$ and $(H^m)^N$, respectively. If without confusion, we will use $\|\cdot\|_{L^q}$ to denote the $L^q$ norm and use $\|(f_1,...,f_n)\|_{L^2}^2$ to denote the summation $\sum_{i=1}^n\|f_i\|_{L^2(\Omega)}^2$. For simplicity, we use $d\Omega$ instead of $dxdydz$ sometimes.

We start with the definition of $z$-weak solutions to system \eqref{16}--\eqref{17}, subject to the boundary conditions \eqref{73}--\eqref{75} and the initial condition (\ref{8}).

\begin{definition}
\label{def}
Given $(v_0,T_0)\in \mathcal H\times L^2(\Omega)$ with $(\partial_zv_0,\partial_zT_0)\in L^2(\Omega)$.
A pair $(v,T)$ is called a global $z$-weak solution to system \eqref{16}--\eqref{17}, subject to the boundary conditions \eqref{73}--\eqref{75} and the initial condition (\ref{8}), if it has the regularities
\begin{equation*}
\begin{aligned}
&(v,\partial_zv)\in L^\infty(0,\mathfrak{T};L^2(\Omega))\cap L^2(0,\mathfrak{T};H^1(\Omega)),\\
&(T,\partial_zT)\in L^\infty(0,\mathfrak{T};L^2(\Omega)),\quad (\nabla_H T,\nabla_H\partial_zT)\in L^2(\Omega\times(0,\mathfrak{T})),\\
&\sqrt{t}(\nabla v,\nabla_HT)\in L^\infty(0,\mathfrak{T};L^2(\Omega))\cap L^2(0,\mathfrak{T};H^1(\Omega)),\\
&\sqrt{t}(\partial_tv, \partial_tT)\in L^2(\Omega\times(0,\mathfrak{T})),\quad (v,T)\in C([0,\mathfrak T];\mathcal H\times L^2(\Omega)),
\end{aligned}
\end{equation*}
for any positive time $\mathfrak{T}$, satisfies equations \eqref{16}--\eqref{17} a.e.\,in $\Omega\times(0,\infty)$, and fulfills the boundary conditions \eqref{73}--\eqref{75} and the initial condition (\ref{8}).
\end{definition}

We are now ready to state the main result of this paper.

\begin{theorem}\label{thm1}
Let $(v_0,T_0)\in\mathcal H\times L^2(\Omega)$ with $(\partial_z v_0,\partial_zT_0)\in L^2(\Omega)$. Then, there is a unique global $z$-weak solution $(v,T)$ to system \eqref{16}--\eqref{17}, subject to the boundary conditions \eqref{73}--\eqref{75} and the initial condition (\ref{8}). Moreover, the unique solution is continuously depending on the initial data.
\end{theorem}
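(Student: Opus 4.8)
The plan is to prove Theorem \ref{thm1} by the standard program for such evolution problems: derive a hierarchy of a priori estimates strong enough to control exactly the norms appearing in Definition \ref{def}, construct approximate solutions enjoying these estimates uniformly, pass to the limit by compactness, and finally prove uniqueness and continuous dependence by an energy estimate on the difference of two solutions. The structural observation that drives everything is that differentiating \eqref{16}--\eqref{17} in $z$ eliminates the unknown surface pressure $\nabla_H p_s$ (which is independent of $z$) and turns the hydrostatic coupling $-\nabla_H\int_{-h}^z T\,d\xi$ into the lower-order term $-\nabla_H T$. Hence, writing $u:=\partial_z v$ and $\theta:=\partial_z T$, the pair $(u,\theta)$ solves a system that is pressure-free and for which $u$ carries full (horizontal and vertical) viscosity while $\theta$ carries only horizontal diffusivity; controlling $(u,\theta)$ in $L^2$ together with the basic energy is precisely what the $z$-weak class demands. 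For the construction step I would regularize the temperature equation by adding an artificial vertical diffusivity $\varepsilon\partial_z^2T$ (restoring the parabolic, fully-dissipative setting in which global strong solvability is already available) and carry out all estimates uniformly in $\varepsilon$, then let $\varepsilon\to0$; a Galerkin truncation adapted to the spaces $\mathcal H$ and $\mathcal V$ is an equivalent alternative.

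The heart of the matter is the a priori estimate, carried out in two layers. First, testing \eqref{16} with $v$ and \eqref{17} with $T$ and using the incompressibility identity $\nabla_H\cdot v+\partial_z w=0$ together with the boundary conditions \eqref{73}--\eqref{75} (all convective, pressure and Coriolis contributions drop out, and the side boundary term $-\tfrac{1}{R_T}\int_{\Gamma_s}(\partial_n T)T=\tfrac{\alpha_T}{R_T}\int_{\Gamma_s}T^2\ge0$ has a good sign) gives the basic bound on $(v,T)$ in $L^\infty(L^2)$ and on $(\nabla v,\nabla_H T)$ in $L^2(L^2)$; the hydrostatic term is absorbed by the viscosity of $v$ since $\|T\|_{L^2}$ is already controlled. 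Second, and crucially, I would test the $u$-equation with $u$ and the $\theta$-equation with $\theta$. Here the available dissipation is $\tfrac1{Re_1}\|\nabla_H u\|_{L^2}^2+\tfrac1{Re_2}\|\partial_z u\|_{L^2}^2$ for $u$ but only $\tfrac1{R_T}\|\nabla_H\theta\|_{L^2}^2$ for $\theta$, so the genuinely dangerous nonlinear terms $\int_\Omega(u\cdot\nabla_H)v\cdot u$, $\int_\Omega(\nabla_H\cdot v)|u|^2$, $\int_\Omega(u\cdot\nabla_H)T\,\theta$ and $\int_\Omega(\nabla_H\cdot v)\theta^2$ must be absorbed without any vertical smoothing of $\theta$. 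This is the main obstacle. I expect to handle it through anisotropic Ladyzhenskaya/Agmon-type inequalities on the cylinder combined with the barotropic--baroclinic splitting $v=\bar v+\tilde v$, $\bar v:=\tfrac1h\int_{-h}^0v\,dz$, which ties $\nabla_H v$ to $\nabla_H u$ and to the two-dimensional quantity $\bar v$; integrating by parts in the horizontal to move derivatives off $v$ (the side boundary terms vanish because $v=0$ on $\Gamma_s$) should let every cubic term be dominated by $\epsilon(\|\nabla u\|_{L^2}^2+\|\nabla_H\theta\|_{L^2}^2)$ plus a Gronwall-integrable multiple of $\|u\|_{L^2}^2+\|\theta\|_{L^2}^2$. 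Closing this estimate yields $(v,\partial_zv)\in L^\infty(L^2)\cap L^2(H^1)$, $(T,\partial_zT)\in L^\infty(L^2)$ and $(\nabla_HT,\nabla_H\partial_zT)\in L^2(L^2)$.

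To reach the remaining, instantaneously-smoothing part of the regularity class I would run a third, time-weighted layer: testing the differentiated equations against $-\Delta_H v$, $\partial_t v$, $-\Delta_H T$ and their $z$-derivative analogues, each multiplied by $t$, and invoking the already-established bounds to control the nonlinearities, produces $\sqrt t\,(\nabla v,\nabla_H T)\in L^\infty(L^2)\cap L^2(H^1)$ and then $\sqrt t\,(\partial_t v,\partial_t T)\in L^2(\Omega\times(0,\mathfrak T))$ directly from the equations. With these bounds in hand, Aubin--Lions compactness lets me pass to the limit in the approximating sequence, and the weak continuity plus the energy identities upgrade convergence to $(v,T)\in C([0,\mathfrak T];\mathcal H\times L^2)$.

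Finally, for uniqueness and continuous dependence I would take two $z$-weak solutions with data $(v_0^{(i)},T_0^{(i)})$, write the equations for the differences $\delta v,\delta T$, and perform an $L^2$ energy estimate. The subtlety, again, is the absence of vertical diffusivity in the $\delta T$ equation: the term coming from transport of the background temperature by $\delta v$ cannot be absorbed by any vertical dissipation, so I would control it using the $z$-weak regularity of the solutions (in particular the $L^\infty(L^2)$ bound on $\partial_z T$ and the $L^2(H^1)$ bound on $\partial_z v$) via the same anisotropic inequalities used in the a priori step, arriving at a differential inequality $\tfrac{d}{dt}(\|\delta v\|_{L^2}^2+\|\delta T\|_{L^2}^2)\le G(t)(\|\delta v\|_{L^2}^2+\|\delta T\|_{L^2}^2)$ with $G\in L^1_{loc}$. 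Gronwall then gives uniqueness when the data coincide and the Lipschitz-type continuous dependence on the initial data in the stated topology.
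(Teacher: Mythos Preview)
Your overall architecture---$\varepsilon$-regularization, a priori estimates, compactness, $L^2$ difference estimate for uniqueness---matches the paper's, and your uniqueness argument is essentially the one given there. The gap is in the second layer of a priori estimates, specifically in closing the $L^\infty(0,\mathfrak T;L^2)$ bound on $u=\partial_z v$ from only the basic energy and the hypothesis $(v_0,\partial_z v_0,T_0,\partial_z T_0)\in L^2$.

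After the integrations by parts you describe, one is left with a cubic term of the form $\int_\Omega |\nabla_H u|\,|\widetilde v|\,|u|\,d\Omega$. Using the barotropic--baroclinic splitting and $|\widetilde v|\le\int_{-h}^0|u|\,dz$ together with the anisotropic Ladyzhenskaya inequality (Lemma~\ref{lemma1}) gives at best
\[
\int_\Omega |\nabla_H u|\,|\widetilde v|\,|u|\,d\Omega
\;\le\; C\|\nabla_H u\|_{L^2}^{2}\,\|u\|_{L^2}\;+\;\text{(lower order)},
\]
and the first term cannot be absorbed into the dissipation $\tfrac{1}{Re_1}\|\nabla_H u\|_{L^2}^2$ unless $\|u\|_{L^2}$ is small. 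So your claim that ``every cubic term be dominated by $\epsilon(\|\nabla u\|_{L^2}^2+\|\nabla_H\theta\|_{L^2}^2)$ plus a Gronwall-integrable multiple of $\|u\|_{L^2}^2+\|\theta\|_{L^2}^2$'' does not hold as stated. The paper deals with this in two stages: first (Propositions~\ref{pro3.2}--\ref{pro3.3}) it closes the $\partial_z v$ estimate via an auxiliary $L^{3+\delta}$ bound on $\widetilde v$, which however requires $\widetilde v_0\in L^{3+\delta}$ and hence $H^1$ initial data; then (Proposition~\ref{pro4.1}) it proves a \emph{local-in-time} estimate on $\partial_z v$ under a local smallness condition on $\partial_z v_0$, obtained by a spatial localization argument, and bootstraps from there through the $H^1$ theory. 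Your proposal is missing an ingredient of this type---either the intermediate $L^{3+\delta}$ control of $\widetilde v$ or a smallness/localization device---to bridge the gap between the $z$-weak data and a closable $\partial_z v$ estimate.
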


\begin{remark}
Global well-posedness of strong solutions to the periodic boundary value problem for system \eqref{16}--\eqref{17} has
already proved in \cite{Cao5}, where the initial datum is assumed to be $H^2$. Note that the regularities required in Theorem \ref{thm1} on the initial data are much weaker than those in \cite{Cao5}. Therefore, Theorem \ref{thm1} not only extends the results in \cite{Cao5} from the spatially periodic case to general cylindrical domains but also weakens the regularity assumptions on the initial data. The
arguments presented in this paper work also for the periodic boundary value problem.
\end{remark}

%%To prove Theorem \ref{thm1}. One can find that the $z$-weak solution to the system \eqref{16}--\eqref{17} is actually a
%%$H^1$ strong solution when it away from the initial time. Thus, if we find a local solution by only assuming
%%$v_0\in\mathcal H$ and $(T_0,\partial_zT_0,\partial_zv_0)\in L^2(\Omega)$, then, one can extend it globally in time by
%%applying the global well-posed result for $H^1$ initial datum. Therefore, this paper can be divided into two main parts.
%%In the first part, we establish the $H^1$ global strong solution to system \eqref{16}--\eqref{17} subject to \eqref{73}
%%and \eqref{75}. In the second part, we deduce the local estimates by only applying the assumptions that $v_0\in \mathcal
%%H$ and $(T_0,\partial_zT_0,\partial_z v_0)\in L^2(\Omega)$.
\begin{remark}
The same result still holds if the horizontal velocity $v$ satisfies the slip boundary conditions rather than the non-slip boundary conditions on $\Gamma_s$. In other words, one still has the global existence and uniqueness of $z$-weak solutions to the same system
if replacing the boundary condition $v=0$ on $\Gamma_s$ by the following
\begin{equation*}
v\cdot\overrightarrow{n}=0,\ \  \partial_n v\times\overrightarrow{n}=0,
\end{equation*}
and at the same time changing the compatibility condition of $\tau$ in \eqref{12} to 
\begin{equation*}
\tau\cdot\overrightarrow{n}=0,\ \  \partial_n \tau\times\overrightarrow{n}=0,\ \ \ \ on~\Gamma_s.
\end{equation*}
In fact, the arguments presented in this paper still work by slightly modifying the calculations.
\end{remark}

The rest of this paper is arranged as follows: in Section 2, some preliminary results being used in the subsequent sections are collected, including the local existence result to a regularized system with full dissipation; in Section 3, global well-posedness and a priori estimates depending on $\|(v_0, T_0)\|_{H^1}$
are established for the regularized system with $H^1$ initial datum; this is the base to construct approximating sequence to $z$-weak solutions in section 5; in section 4,
global in time a priori estimate depending only on $\|(v_0, T_0, \partial_zv_0,\partial_zT_0)\|_{L^2}$ are carried out;
Theorem \ref{thm1} is proved in section 5; in the Appendix, we give some details about how to transform the problem with nonhomogeneous boundary conditions to the corresponding problem with homogeneous boundary conditions.

Throughout this paper we use $C$ to denote a generic positive constant which may vary from line to line. For simplicity
of presentations, the dependence of $C$ on the parameters or quantities is stated only in the statements of the
theorems, propositions, corollaries, or lemmas, rather in their proofs.

\section{Preliminaries}
In this section, we collect some preliminary results which will be used in the rest of this paper.

The following inequality will be used frequently in the a priori estimates, one can prove it in the same way as in \cite{Cao8} and \cite{Cao1}, and thus we omit its proof here.

\begin{lemma}\label{lemma1}
Let $S$ be a bounded domain in $\mathbb{R}^2$ and denote by $L$ the diameter of $S$. Then, the following inequalities hold:
\begin{equation*}
\begin{aligned}
&\ \ \  \int_S\left( \int_{-h}^0|\phi(x,y,z)|dz\right)\left(\int_{-h}^0|\varphi(x,y,z)\psi(x,y,z)|dz\right)dxdy\\
&\leq C\|\phi\|_{L^2}\|\varphi\|_{L^2}^{\frac{1}{2}}\left(\frac{\|\varphi\|_{L^2}}{L}+\|\nabla_H\varphi\|_{L^2}\right)^{\frac{1}{2}}\|\psi\|_{L^2}^{\frac{1}{2}}\left(\frac{\|\psi\|_{L^2}}{L}+\|\nabla_H\psi\|_{L^2}\right)^{\frac{1}{2}}
\end{aligned}
\end{equation*}
and
\begin{equation*}
\begin{aligned}
&\ \ \  \int_S\left( \int_{-h}^0|\phi(x,y,z)|dz\right)\left(\int_{-h}^0|\varphi(x,y,z)\psi(x,y,z)|dz\right)dxdy\\
&\leq C\|\psi\|_{L^2}\|\varphi\|_{L^2}^{\frac{1}{2}}\left(\frac{\|\varphi\|_{L^2}}{L}+\|\nabla_H\varphi\|_{L^2}\right)^{\frac{1}{2}}\|\phi\|_{L^2}^{\frac{1}{2}}\left(\frac{\|\phi\|_{L^2}}{L}+\|\nabla_H\phi\|_{L^2}\right)^{\frac{1}{2}},
\end{aligned}
\end{equation*}
here we denote $\|\cdot\|_{L^q}=\|\cdot\|_{L^q(S\times (-h,0))}$, for any $\phi$, $\varphi$ and $\psi$, such that the quantities on the right hand sides are finite, and $C$ is a constant depending only on the shape of $S$, but not on its size.
\end{lemma}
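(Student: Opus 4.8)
The plan is to reduce the genuinely three–dimensional estimate to a two–dimensional Ladyzhenskaya-type inequality in the horizontal variables, after using the Cauchy–Schwarz inequality in $z$ to collapse each vertical integral onto a function of $(x,y)$ alone. To this end I introduce the vertically averaged quantities
\begin{equation*}
\tilde\phi(x,y)=\left(\int_{-h}^0|\phi|^2\,dz\right)^{\frac12},\quad
\tilde\varphi(x,y)=\left(\int_{-h}^0|\varphi|^2\,dz\right)^{\frac12},\quad
\tilde\psi(x,y)=\left(\int_{-h}^0|\psi|^2\,dz\right)^{\frac12}.
\end{equation*}
Cauchy–Schwarz in the vertical variable gives the pointwise-in-$(x,y)$ bounds $\int_{-h}^0|\phi|\,dz\le\sqrt h\,\tilde\phi$ and $\int_{-h}^0|\varphi\psi|\,dz\le\tilde\varphi\,\tilde\psi$, so that the common left–hand side of both inequalities is dominated by $\sqrt h\int_S\tilde\phi\,\tilde\varphi\,\tilde\psi\,dxdy$.

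The two inequalities then differ only in how I split this integral by Hölder in $(x,y)$. For the first inequality I keep $\tilde\phi$ in $L^2(S)$ and place $\tilde\varphi,\tilde\psi$ in $L^4(S)$, obtaining $\|\tilde\phi\|_{L^2(S)}\,\|\tilde\varphi\|_{L^4(S)}\,\|\tilde\psi\|_{L^4(S)}$; for the second I keep $\tilde\psi$ in $L^2(S)$ and place $\tilde\phi,\tilde\varphi$ in $L^4(S)$. Since by Fubini $\|\tilde\phi\|_{L^2(S)}=\|\phi\|_{L^2}$, and likewise for $\tilde\varphi,\tilde\psi$, everything reduces to controlling the $L^4(S)$ norm of a vertically averaged function.

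The one piece of real content is the scale-invariant two–dimensional inequality
\begin{equation*}
\|g\|_{L^4(S)}^2\le C\,\|g\|_{L^2(S)}\left(\frac{\|g\|_{L^2(S)}}{L}+\|\nabla_H g\|_{L^2(S)}\right),\qquad g\in H^1(S),
\end{equation*}
with $C$ depending only on the shape of $S$. I would prove this by rescaling $S$ to a domain of unit diameter, applying the standard Gagliardo–Nirenberg inequality $\|\hat g\|_{L^4}^2\le C\|\hat g\|_{L^2}\|\hat g\|_{H^1}$ there, and tracking the scaling of each norm; the correction term $\|g\|_{L^2(S)}/L$ is precisely what the scaling of the full $H^1$ norm produces, and it is what renders the constant independent of the size of $S$. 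Applying this with $g=\tilde\varphi$ yields $\|\tilde\varphi\|_{L^4(S)}^2\le C\|\varphi\|_{L^2}\bigl(\tfrac{\|\varphi\|_{L^2}}{L}+\|\nabla_H\tilde\varphi\|_{L^2(S)}\bigr)$, after which I still need the gradient transfer $\|\nabla_H\tilde\varphi\|_{L^2(S)}\le\|\nabla_H\varphi\|_{L^2}$; this follows from differentiating under the integral and Cauchy–Schwarz, giving $|\nabla_H\tilde\varphi|\le(\int_{-h}^0|\nabla_H\varphi|^2\,dz)^{1/2}$ a.e. The same estimates hold verbatim for $\tilde\psi$ and $\tilde\phi$, and feeding them into the two Hölder splittings reproduces the asserted right–hand sides with exactly the exponents $\tfrac12$ shown.

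I do not expect any conceptual obstacle; the difficulties are purely technical. The first is the careful bookkeeping of the scale-invariant constant, i.e.\ ensuring that the rescaling argument really delivers a $C$ independent of the size of $S$ and that the correction $\|\cdot\|_{L^2(S)}/L$ is propagated consistently through the Hölder splitting. The second is the differentiability of $\tilde\varphi$ on its zero set, which I would handle routinely by regularizing $\tilde\varphi$ as $\bigl(\varepsilon+\int_{-h}^0|\varphi|^2\,dz\bigr)^{1/2}$ and letting $\varepsilon\to0$.
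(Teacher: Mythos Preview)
Your proposal is correct and follows exactly the standard route that the paper defers to by citing \cite{Cao8} and \cite{Cao1}: Cauchy--Schwarz in $z$ to reduce to the horizontal quantities $\tilde\phi,\tilde\varphi,\tilde\psi$, H\"older in $(x,y)$ with the two indicated $L^2$--$L^4$--$L^4$ splittings, and a scale-invariant two-dimensional Ladyzhenskaya inequality together with the gradient-transfer bound $|\nabla_H\tilde\varphi|\le(\int_{-h}^0|\nabla_H\varphi|^2\,dz)^{1/2}$. The paper itself omits the proof entirely, so there is nothing further to compare; your handling of the scaling (the $\|\cdot\|_{L^2}/L$ correction) and of the zero-set issue via $(\varepsilon+\int|\varphi|^2\,dz)^{1/2}$ are both standard and sound. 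One could alternatively bypass the gradient-transfer step by applying Minkowski's integral inequality to $\|\tilde\varphi\|_{L^4(S)}^2=\|\int_{-h}^0|\varphi|^2\,dz\|_{L^2(S)}$ and then Ladyzhenskaya slice-by-slice in $z$, but this is a cosmetic reordering rather than a different argument.
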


We also need the following Aubin--Lions lemma to obtain some compactness of the bounded sequences.

\begin{lemma}[see Corollary 4 of \cite{Sim}]\label{lemma2}
Let $t^*\in (0,\infty)$ be given. Assume that $X$, $Y$ and $Z$ are three Banach spaces such that $X\hookrightarrow\hookrightarrow Y\hookrightarrow Z$. Then, it holds that

(i)\ \  if $F$ is a bounded subset of $L^p(0,t^*;X)$ where $1\leq p<\infty$, and $\frac{\partial F}{\partial t}=\{\frac{\partial f}{\partial t}|f\in F\}$ is bounded in $L^1(0,t^*;Z)$, then $F$ is relatively compact in $L^p(0,t^*; Y)$;

(ii)\ \ if $F$ is a bounded subset of $L^\infty(0,t^*;X)$ , and $\frac{\partial F}{\partial t}$ is bounded in $L^r(0,t^*;Z)$, where $r>1$, then $F$ is relatively compact in $C([0,t^*];Y)$.
\end{lemma}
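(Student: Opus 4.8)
The plan is to deduce both parts from a single Banach-valued Fr\'echet--Kolmogorov--Riesz compactness criterion, the two analytic inputs being Ehrling's interpolation inequality (which encodes the compact embedding $X\hookrightarrow\hookrightarrow Y$) and a sharp conversion of the time-derivative bound into decay of the time-translations. The criterion I would isolate first is the following: a family $F$ that is bounded in $L^p(0,t^*;Y)$, with $1\le p<\infty$, is relatively compact in $L^p(0,t^*;Y)$ provided (a) the time-translates are uniformly equicontinuous, i.e.\ $\sup_{f\in F}\|\tau_h f-f\|_{L^p(0,t^*-h;Y)}\to 0$ as $h\downarrow 0$, where $\tau_h f(t)=f(t+h)$, and (b) for every $0<t_1<t_2<t^*$ the set of local averages $\{\int_{t_1}^{t_2}f(t)\,dt:f\in F\}$ is relatively compact in $Y$; for $p=\infty$ the analogous statement with $C([0,t^*];Y)$ holds once (a) is strengthened to a uniform modulus of continuity. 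This is the vector-valued analogue of the classical $L^p$ compactness theorem; it may be proved by mollifying in time, noting that (a) gives $\rho_\varepsilon*f\to f$ in $L^p(0,t^*;Y)$ uniformly over $F$ while (b) forces each $\rho_\varepsilon*f$ to take values in a fixed relatively compact subset of $Y$, so that Arzel\`a--Ascoli applies to the mollified family. I would take this as the backbone and reduce everything to verifying (a) and (b). Note that $X\hookrightarrow\hookrightarrow Y$ entails $X\hookrightarrow Y$ continuously, so the $L^p(0,t^*;X)$ bound already places $F$ in a bounded subset of $L^p(0,t^*;Y)$.

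Next I would establish Ehrling's inequality: since $X\hookrightarrow\hookrightarrow Y\hookrightarrow Z$, for every $\eta>0$ there is $C_\eta>0$ with $\|v\|_Y\le \eta\|v\|_X+C_\eta\|v\|_Z$ for all $v\in X$. This follows by a standard contradiction argument: a sequence violating it, after normalization in $Y$, is bounded in $X$, hence precompact in $Y$, yet is forced to vanish in $Z$ and therefore in $Y$, contradicting the normalization. Condition (b) is then immediate from the $L^p(0,t^*;X)$ bound: by H\"older's inequality, with $p'$ the conjugate exponent, $\left\|\int_{t_1}^{t_2}f\,dt\right\|_X\le (t_2-t_1)^{1/p'}\|f\|_{L^p(0,t^*;X)}\le C$, so the averages are bounded in $X$ and thus relatively compact in $Y$ by the compact embedding.

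The crux is condition (a). Here I would use that $\partial_t F$ bounded in $L^1(0,t^*;Z)$ means each $f\in W^{1,1}(0,t^*;Z)$, hence (after modification on a null set) is absolutely continuous into $Z$ with $f(t+h)-f(t)=\int_t^{t+h}\partial_s f(s)\,ds$, so that $\|\tau_h f(t)-f(t)\|_Z\le \int_t^{t+h}\|\partial_s f(s)\|_Z\,ds$. Applying Ehrling pointwise in $t$ and taking the $L^p$-norm in time gives
\[
\|\tau_h f-f\|_{L^p(0,t^*-h;Y)}\le \eta\,\|\tau_h f-f\|_{L^p(0,t^*-h;X)}+C_\eta\,\|\tau_h f-f\|_{L^p(0,t^*-h;Z)}.
\]
The first term is at most $2\eta\,\|f\|_{L^p(0,t^*;X)}\le 2\eta C$, small uniformly in $f$ and $h$ once $\eta$ is fixed. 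For the second term, naive interpolation between the small $L^1$-in-time control and the merely bounded $L^p$-in-time control of $\|\tau_h f-f\|_Z$ fails to yield $L^p$-smallness; the correct tool is Young's convolution inequality. Writing $g(s)=\|\partial_s f(s)\|_Z$, the map $t\mapsto\int_t^{t+h}g(s)\,ds$ is the convolution of $g$ with the indicator of an interval of length $h$, whence $\|\tau_h f-f\|_{L^p(0,t^*-h;Z)}\le h^{1/p}\|g\|_{L^1(0,t^*)}=h^{1/p}\|\partial_t f\|_{L^1(0,t^*;Z)}\le C\,h^{1/p}$. Choosing $\eta$ small and then $h$ small makes the right-hand side arbitrarily small uniformly over $F$, which is exactly (a) and proves part (i).

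For part (ii) I would run the same scheme with $p=\infty$ and $\partial_t F$ bounded in $L^r(0,t^*;Z)$, $r>1$: H\"older's inequality in the translation bound gives $\|\tau_h f(t)-f(t)\|_Z\le h^{1-1/r}\|\partial_t f\|_{L^r(0,t^*;Z)}$, a uniform H\"older-in-time modulus of continuity in $Z$; combined through Ehrling with the $L^\infty(0,t^*;X)$ bound this upgrades to a uniform modulus of continuity in $Y$, which together with (b) yields relative compactness in $C([0,t^*];Y)$. The main obstacle, as indicated, is precisely the uniform equicontinuity estimate (a): one must resist interpolating the time-integrability and instead combine Ehrling's spatial interpolation with Young's (resp.\ H\"older's) inequality to turn the $L^1$ (resp.\ $L^r$) control of $\partial_t f$ into genuine $L^p$ (resp.\ uniform) smallness of the $Y$-valued translations.
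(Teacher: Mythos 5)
The paper itself gives no proof of this lemma---it is quoted verbatim as Corollary 4 of Simon \cite{Sim}---and your argument is a correct reconstruction of essentially the route taken in that cited source: Simon's translation--average compactness criterion in $L^p(0,t^*;Y)$ (his Theorem 1), the Ehrling-type interpolation inequality $\|v\|_Y\le\eta\|v\|_X+C_\eta\|v\|_Z$ encoding the compact embedding $X\hookrightarrow\hookrightarrow Y\hookrightarrow Z$, and the correct conversion of the $L^1(0,t^*;Z)$ (resp.\ $L^r(0,t^*;Z)$, $r>1$) bound on $\partial_tf$ into $L^p$-smallness of the translations via Young's convolution inequality (the factor $h^{1/p}$, which rightly fails at $p=\infty$) resp.\ a uniform modulus of continuity via H\"older (the factor $h^{1-1/r}$). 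Since this matches the proof of the cited result, no further comparison is needed.
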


The following lemma is taken from \cite{Temam} (see Lemma 1.4 on page 263).

\begin{lemma}\label{lemmaADLI}
Let $X$ and $Y$ be two Banach spaces, such that $X\hookrightarrow Y$.
If a function $\phi$ belongs to $L^\infty(0, \mathfrak T; X)$ and is weakly continuous with
values in $Y$, then $\phi$ is weakly continuous with values in $X$.
\end{lemma}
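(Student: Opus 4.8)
The plan is to argue entirely at the level of the duality pairing, using the single structural fact that the continuous inclusion $X\hookrightarrow Y$ dualizes to a continuous restriction map $Y^*\to X^*$, so that \emph{weak convergence in $X$ implies weak convergence in $Y$}: if $x_n\rightharpoonup x$ in $X$, then for every $g\in Y^*$ one has $\langle g,x_n\rangle_Y=\langle g|_X,x_n\rangle_X\to\langle g|_X,x\rangle_X=\langle g,x\rangle_Y$. The only ingredient beyond this is the extraction of weakly convergent subsequences from $X$-bounded sequences, which is where reflexivity of $X$ enters; this is harmless in the present paper, since $X$ will always be one of the Hilbert spaces appearing in Definition \ref{def}.

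First I would upgrade the pointwise information. From $\phi\in L^\infty(0,\mathfrak T;X)$ one knows only that $\|\phi(t)\|_X\le M$ for almost every $t$, whereas weak continuity into $Y$ makes $\phi(t)$ a well-defined element of $Y$ for \emph{every} $t$. To see that in fact $\phi(t)\in X$ with $\|\phi(t)\|_X\le M$ for all $t$, I fix $t$ and choose $t_n\to t$ along which the $X$-bound holds; by reflexivity a subsequence of $\{\phi(t_n)\}$ converges weakly in $X$ to some $\chi\in X$ with $\|\chi\|_X\le M$, and by the implication recalled above this subsequence converges weakly in $Y$ as well, so its $Y$-limit must agree with the hypothesized value $\phi(t)$. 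Hence $\phi(t)=\chi\in X$, and the uniform $X$-bound propagates to every time.

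Next I would establish weak continuity in $X$ by a subsequence argument reduced to scalars. Fixing $\eta\in X^*$ and an arbitrary sequence $t_n\to t$, every subsequence of $\{\phi(t_n)\}$ is bounded in $X$, hence admits a further subsequence converging weakly in $X$; exactly as in the previous step, the resulting weak-$X$ limit is forced to equal $\phi(t)$. Consequently the scalar sequence $\langle\eta,\phi(t_n)\rangle_{X^*,X}$ has the property that each of its subsequences possesses a further subsequence tending to $\langle\eta,\phi(t)\rangle_{X^*,X}$, which for a sequence of real numbers forces convergence of the whole sequence to that limit. Since $\eta\in X^*$ is arbitrary, $t\mapsto\phi(t)$ is weakly continuous into $X$, as claimed.

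The crux, and essentially the lone obstacle, is the weak sequential compactness of bounded subsets of $X$ invoked twice above; once that is granted through reflexivity, the compatibility of the two weak topologies does all the remaining work of identifying limits, and no separability or metrizability of the weak topology is needed, because the final continuity statement is tested against one functional $\eta$ at a time.
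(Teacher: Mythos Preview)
The paper does not prove this lemma at all; it simply records it as ``taken from \cite{Temam} (see Lemma 1.4 on page 263)'' and moves on. So there is no in-paper argument to compare your proposal against.

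Your argument is the standard one and is correct once reflexivity of $X$ is assumed, as you explicitly do. Both steps---propagating the a.e.\ $X$-bound to every $t$ by extracting weak-$X$ cluster points and identifying them via the weak-$Y$ continuity, and then the sub-subsequence trick functional by functional---are clean and complete. Your observation that the dualized inclusion $Y^*\to X^*$ makes weak-$X$ convergence imply weak-$Y$ convergence is exactly the mechanism that pins down the limits.

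It is worth flagging that the reflexivity hypothesis you add is not merely a convenience but is genuinely needed: as literally stated in the paper (arbitrary Banach spaces $X\hookrightarrow Y$), the lemma is false. For instance, with $X=C([-1,1])$, $Y=L^2(-1,1)$, and $\phi(t)=\tanh(\cdot/t)$ for $t>0$, one has $\phi\in L^\infty(0,1;X)$ and $\phi$ is weakly (even strongly) continuous into $Y$ with $\phi(0)=\mathrm{sgn}$, yet $\phi(0)\notin X$. You handle this correctly by noting that in every application within the paper $X$ is a Hilbert space, so your added hypothesis is met; Temam's original statement likewise carries this kind of structural assumption.
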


Now we consider the system with full dissipation. By adding a vertical diffusivity term $-\varepsilon\partial_z^2T$ with a small parameter $\varepsilon\in(0,1)$ in the temperature equation, we have the following regularized system with full dissipation:
\begin{equation}
\begin{aligned}
\partial_t v-\frac{1}{Re_1}\Delta_H v&-\frac{1}{Re_2}\partial_z^2 v+(v\cdot\nabla_H)v-\left(\int_{-h}^z\nabla_H\cdot vd\xi\right)\partial_z v\\
&+\nabla_H p_s-\nabla_H\int_{-h}^zTd\xi+fk\times v=0,\label{19}
\end{aligned}
\end{equation}
\begin{equation}
\nabla_H\cdot\int_{-h}^0vdz=0,\label{20}
\end{equation}
\begin{equation}
\partial_tT-\frac{1}{R_T}\Delta_HT-\varepsilon\partial_z^2T+v\cdot\nabla_HT-\left(\int_{-h}^z\nabla_H\cdot vd\xi\right)\partial_z T=0.\label{21}
\end{equation}
We complement the above system with the initial condition 
\begin{equation}
\begin{aligned}
\big(v(t),T(t)\big)\big|_{t=0}&=\big(v_0,T_0\big)\label{22}
\end{aligned}
\end{equation}
and the boundary conditions
\begin{equation}
\begin{aligned}
\textrm{on}~\Gamma_s:\ \  &v=0,~~\partial_nT+\alpha_TT=0,\label{23}
\end{aligned}
\end{equation}
\begin{equation}
\textrm{on}~\Gamma_u~\textrm{and}~\Gamma_b:\  \partial_z v=0,~~\partial_zT=0.\label{24}
\end{equation}

Note that here we impose the homogeneous Neumann type rather than the Dirichlet type boundary conditions on $\Gamma_u$ and $\Gamma_b$ for $T$. The reason is that we will eventually pass the vertical diffusivity $\varepsilon$ to zero, while the Neumann boundary conditions will not produce the boundary layer during this limit procedure.

Finally, for any fixed $\varepsilon>0$, the following short time existence result holds 
for the above regularized system, which can be proved in the same way as in \cite{Gui}.

\begin{proposition}\label{pro2.1}
Let $(v_0,T_0)\in \mathcal V\times H^1(\Omega)$. Then, for any $\varepsilon\in(0,1)$, there is a positive time $t_\varepsilon>0$ such that system \eqref{19}--\eqref{21}, subject to \eqref{22}--\eqref{24}, has a unique solution $(v,T)$, on $\Omega\times(0,t_\varepsilon)$, satisfying
\begin{equation*}
(v,T)\in C([0,t_\varepsilon];H^1(\Omega))\cap L^2(0,t_\varepsilon;H^2(\Omega)),~~(\partial_tv,\partial_tT)\in L^2((0,t_\varepsilon)\times \Omega).
\end{equation*}
\end{proposition}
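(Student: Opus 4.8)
The plan is to use a Galerkin approximation scheme, for which the full (anisotropic) dissipation available in the regularized system, thanks to the parameter $\varepsilon>0$, makes the system genuinely parabolic and the argument essentially standard, following \cite{Gui}. First I would choose appropriate orthonormal bases: for the velocity, a basis of $\mathcal V$ built from eigenfunctions of the hydrostatic Stokes-type operator associated with $-\frac{1}{Re_1}\Delta_H-\frac{1}{Re_2}\partial_z^2$ under the boundary conditions \eqref{24} and the constraint defining $\mathcal V$; for the temperature, a basis of eigenfunctions of $-\frac{1}{R_T}\Delta_H-\varepsilon\partial_z^2$ subject to $\partial_nT+\alpha_TT=0$ on $\Gamma_s$ and $\partial_zT=0$ on $\Gamma_u\cup\Gamma_b$. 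Projecting \eqref{19} and \eqref{21} onto the spans of the first $n$ basis functions yields a finite-dimensional system of ODEs for the coefficients. A crucial simplification is that the surface-pressure term drops out against the velocity basis: for any $\phi\in\mathcal V$, integrating by parts horizontally and using $\nabla_H\cdot\int_{-h}^0\phi\,dz=0$ together with $\phi=0$ on $\Gamma_s$ gives $\int_\Omega\nabla_Hp_s\cdot\phi\,d\Omega=0$, so $p_s$ disappears from the Galerkin equations and is recovered only at the end. Since the nonlinearities are polynomial in the unknowns and their first derivatives, they are locally Lipschitz on the finite-dimensional spaces, so the Cauchy--Lipschitz theorem gives local-in-time existence of the $n$-th Galerkin solution.

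The next step is to derive $H^1$ bounds on the Galerkin solutions that are uniform in $n$ on some short interval $[0,t_\varepsilon]$. Testing the velocity equation with $-\frac{1}{Re_1}\Delta_H v-\frac{1}{Re_2}\partial_z^2 v$ and the temperature equation with the corresponding second-order operator, and adding, I would produce the full second-order dissipation $\|\nabla_H\nabla v\|_{L^2}^2+\|\partial_z\nabla v\|_{L^2}^2$ (and its analogue for $T$, where the Robin boundary contribution has a favorable sign because $\alpha_T\ge0$) on the left-hand side. The only delicate contributions are the primitive-equations nonlinearities $(v\cdot\nabla_H)v$, $\bigl(\int_{-h}^z\nabla_H\cdot v\,d\xi\bigr)\partial_z v$, and their temperature analogues, in which the vertical velocity is only as regular as $\nabla_H v$ and is nonlocal in $z$. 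These are controlled precisely by the anisotropic inequality of Lemma \ref{lemma1}, which bounds a vertical integral of a product by a mix of $L^2$ and horizontal-gradient norms; combined with the two-dimensional Ladyzhenskaya inequality and interpolation, each such term is absorbed into a small multiple of the dissipation plus a superlinear polynomial in $\|(v,T)\|_{H^1}^2$. This produces a differential inequality of the form $\frac{d}{dt}y\le C(y^\alpha+y)$ for $y=\|(v,T)\|_{H^1}^2$ with $\alpha>1$, whose comparison solution stays finite on a time $t_\varepsilon$ depending only on $\|(v_0,T_0)\|_{H^1}$ (and on $\varepsilon$); this yields the uniform bounds in $L^\infty(0,t_\varepsilon;H^1)\cap L^2(0,t_\varepsilon;H^2)$, and testing with $\partial_tv$, $\partial_tT$ gives $(\partial_tv,\partial_tT)\in L^2$.

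With the uniform estimates in hand, I would pass to the limit $n\to\infty$. Weak-$*$ compactness provides limits in $L^\infty(0,t_\varepsilon;H^1)\cap L^2(0,t_\varepsilon;H^2)$, while the $L^2$ bound on the time derivatives together with the Aubin--Lions lemma (Lemma \ref{lemma2}) yields strong convergence in, e.g., $L^2(0,t_\varepsilon;H^1)$, which is exactly what is needed to pass to the limit in the quadratic nonlinearities. The surface pressure $p_s$ is then recovered from the momentum equation by a de Rham/Bogovskii-type argument adapted to the hydrostatic constraint. Finally, uniqueness and continuous dependence follow from an $L^2$ energy estimate on the difference of two solutions: subtracting the equations, testing the difference against itself, and using Lemma \ref{lemma1} to control the nonlinear differences by the dissipation times the strong norms of the solutions, one closes a Gronwall inequality. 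The main obstacle throughout is the term $\bigl(\int_{-h}^z\nabla_H\cdot v\,d\xi\bigr)\partial_z v$, the signature difficulty of the primitive equations, where the nonlocal-in-$z$ vertical velocity couples to $\partial_z v$; its treatment via the anisotropic estimate of Lemma \ref{lemma1} is the heart of closing the $H^1$ a priori bound.
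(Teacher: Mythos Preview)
Your proposal is correct and matches the paper's approach: the paper does not give a proof of this proposition but simply states that it ``can be proved in the same way as in \cite{Gui}'', which is precisely the Galerkin scheme you outline. The key ingredients you identify---the full parabolic dissipation available for $\varepsilon>0$, the anisotropic estimate of Lemma~\ref{lemma1} to control the nonlocal vertical-velocity term, and Aubin--Lions compactness---are exactly those used in \cite{Gui}.
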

%\begin{proof}
%The proof can be given in the same way as in \cite{Gui} and, thus, we omit it here.
%\end{proof}

%In the next section, we will extend the above local solution to be a global one and derive the a priori
%estimates independent of $\varepsilon$.

\section{Global strong solutions with $H^1$ initial data}
\label{secstrong}
This section is denoted to establishing the global well-posedness of strong solutions to the regularized system
\eqref{19}--\eqref{21}, subject to \eqref{22}--\eqref{24}, and deriving the $\varepsilon$--independent a priori estimates. %This will be crucial to prove our main result in section 5.

Given $(v_0,T_0)\in\mathcal V\times H^1(\Omega)$. By Proposition \ref{pro2.1}, there is a unique local solution $(v,T)$ to system \eqref{19}--\eqref{21}, subject to \eqref{22}--\eqref{24}. By applying Proposition \ref{pro2.1} iteratively, one can extend $(v,T)$ to the maximal time of existence $t^*_\varepsilon$. Due to this, throughout this section until the last subsection, we always assume that $(v,T)$ has been extended to the maximal time of existence $t^*_\varepsilon$.

The a priori estimates for $(v,T)$ are carried out in the following order: the low-order energy estimates, estimates of $(\partial_zv,\partial_zT)$, and estimates of $(\nabla_Hv,\nabla_HT)$ and $(\partial_tv,\partial_tT)$. There are carried out separately in the following three subsections.

\subsection{Low-order energy estimates}

First, one can obtain the following basic energy estimate.
\begin{proposition}\label{pro3.1}
For any positive time $\mathfrak{T}\in(0,t^*_\varepsilon)$, the following holds
\begin{align*}
&\ \  \sup_{0\leq t\leq \mathfrak{T}}\|(v,T)\|_{L^2}^2(t)+\frac{1}{R_T}\|\nabla_HT\|_{L^2(\Omega\times(0,\mathfrak{T}))}^2
+\frac{1}{Re_1}\|\nabla_Hv\|_{L^2(\Omega\times(0,\mathfrak{T}))}^2 \\
&\qquad\qquad
 +\frac{1}{Re_2}\|\partial_zv\|_{L^2(\Omega\times(0,\mathfrak{T}))}^2\leq e^{C\mathfrak{T}}\|(v_0,T_0)\|_{L^2}^2,\label{31}
\end{align*}
where $C$ is a positive constant depending only on $h$ and $R_T$.
\end{proposition}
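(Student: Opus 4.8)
The plan is to run the classical $L^2$ energy method, testing the momentum equation \eqref{19} against $v$ and the temperature equation \eqref{21} against $T$, and to exploit the fact that the temperature equation is self-contained at the $L^2$ level. Since we argue on the strong solution furnished by Proposition \ref{pro2.1}, which lies in $L^2(0,t^*_\varepsilon;H^2)$ with square-integrable time derivative, every integration by parts below is legitimate.

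First I would test \eqref{21} with $T$. The advection block $v\cdot\nabla_H T-(\int_{-h}^z\nabla_H\cdot v\,d\xi)\partial_z T$ is the transport of $T$ by the three-dimensional velocity field $(v,w)$ with $w$ reconstructed by \eqref{14}; because this field is divergence free and satisfies $v=0$ on $\Gamma_s$ and $w=0$ on $\Gamma_u\cup\Gamma_b$, pairing it with $T$ and integrating by parts yields exactly zero. The horizontal dissipation $-\frac1{R_T}\Delta_H T$ produces $\frac1{R_T}\|\nabla_H T\|_{L^2}^2$ together with a boundary contribution on $\Gamma_s$ which, thanks to the Robin condition $\partial_n T=-\alpha_T T$ with $\alpha_T\ge0$, equals $\frac{\alpha_T}{R_T}\|T\|_{L^2(\Gamma_s)}^2\ge0$; the vertical regularizing term $-\varepsilon\partial_z^2 T$ produces $\varepsilon\|\partial_z T\|_{L^2}^2\ge0$ with no boundary leftover since $\partial_z T=0$ on $\Gamma_u\cup\Gamma_b$. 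Dropping the two nonnegative extra terms gives
\begin{equation*}
\frac12\frac{d}{dt}\|T\|_{L^2}^2+\frac1{R_T}\|\nabla_H T\|_{L^2}^2\le0,
\end{equation*}
which integrates to the full bounds $\sup_t\|T\|_{L^2}^2\le\|T_0\|_{L^2}^2$ and $\frac1{R_T}\|\nabla_H T\|_{L^2(\Omega\times(0,\mathfrak T))}^2\le\frac12\|T_0\|_{L^2}^2$, involving no constant beyond those already displayed.

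Next I would test \eqref{19} with $v$. The same divergence-free/boundary argument kills the advection block; the Coriolis term drops because $(k\times v)\cdot v=0$ pointwise; and the surface-pressure term vanishes after a horizontal integration by parts using the constraint \eqref{20} together with $v=0$ on $\Gamma_s$ (so that $\int_{-h}^0 v\,dz$ has vanishing trace on $\partial M$). The two dissipation terms give $\frac1{Re_1}\|\nabla_H v\|_{L^2}^2+\frac1{Re_2}\|\partial_z v\|_{L^2}^2$, their boundary terms disappearing because $v=0$ on $\Gamma_s$ and $\partial_z v=0$ on $\Gamma_u\cup\Gamma_b$. The only surviving term is the buoyancy coupling $\int_\Omega\big(\int_{-h}^z\nabla_H T\,d\xi\big)\cdot v\,d\Omega$, which I would bound by $h\|\nabla_H T\|_{L^2}\|v\|_{L^2}$ (using $\|\int_{-h}^z\nabla_H T\,d\xi\|_{L^2(\Omega)}\le h\|\nabla_H T\|_{L^2(\Omega)}$) and then split by Young's inequality into $\frac h2\|\nabla_H T\|_{L^2}^2+\frac h2\|v\|_{L^2}^2$. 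Crucially I would absorb this into $\|v\|_{L^2}^2$ and into the already-controlled quantity $\|\nabla_H T\|_{L^2}^2$, \emph{not} into the velocity dissipation, so that the coefficients $1/Re_1$ and $1/Re_2$ are retained intact on the left.

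Finally I would close with Gronwall: the velocity inequality $\frac{d}{dt}\|v\|_{L^2}^2\le h\|\nabla_H T\|_{L^2}^2+h\|v\|_{L^2}^2$ (after dropping the nonnegative dissipation) combined with the temperature control of $\int\|\nabla_H T\|_{L^2}^2$ gives $\sup_t\|v\|_{L^2}^2\le e^{h\mathfrak T}(\|v_0\|_{L^2}^2+\tfrac{hR_T}2\|T_0\|_{L^2}^2)$; reinserting this into the time-integrated velocity identity controls $\frac1{Re_1}\|\nabla_H v\|^2+\frac1{Re_2}\|\partial_z v\|^2$ and yields the claimed right-hand side $e^{C\mathfrak T}\|(v_0,T_0)\|_{L^2}^2$ with $C=C(h,R_T)$. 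I expect the only genuinely delicate points to be the two cancellations—verifying that the nonlinear advection and the surface-pressure terms truly vanish, which hinges on correctly pairing the reconstruction \eqref{14} of $w$ with the boundary data and the constraint \eqref{20}—and the bookkeeping that isolates the buoyancy coupling against $\|v\|_{L^2}^2$ rather than against the dissipation, since this is precisely what keeps $C$ independent of the Reynolds numbers.
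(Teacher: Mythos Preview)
Your proposal is correct and follows essentially the same approach as the paper: test \eqref{19} with $v$ and \eqref{21} with $T$, use the divergence-free structure and boundary conditions to cancel the advection and pressure terms, bound the buoyancy coupling by $C\|\nabla_H T\|_{L^2}\|v\|_{L^2}$, and close via Young's inequality and Gr\"onwall. The only cosmetic difference is that the paper adds the two energy identities and applies Gr\"onwall once, whereas you first close the autonomous temperature estimate and then feed $\int_0^{\mathfrak T}\|\nabla_H T\|_{L^2}^2\,dt$ into the velocity inequality; both routes yield the stated bound with $C=C(h,R_T)$.
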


\begin{proof}
Multiplying \eqref{19} and (\ref{21}), respectively, with $v$ and $T$, and integrating over $\Omega$, it follows from integrating by parts and
\eqref{20} that
\begin{equation*}
\begin{aligned}
&\ \ \frac{1}{2}\frac{d}{dt}\|v\|_{L^2}^2+\frac{1}{Re_1}\|\nabla_H v\|_{L^2}^2+\frac{1}{Re_2}\|\partial_z v\|_{L^2}^2\\
&=\left(\nabla_H\int_{-h}^zTd\xi,v\right)_{L^2}
\leq C\|\nabla_H T\|_{L^2}\|v\|_{L^2} \label{29}
\end{aligned}
\end{equation*}
and 
\begin{equation*}
\frac{1}{2}\frac{d}{dt}\|T\|_{L^2}^2+\frac{1}{R_T}\|\nabla_HT\|_{L^2}^2+\frac{\alpha_T}{R_T}\|T\|_{L^2(\Gamma_s)}^2+\varepsilon\|\partial_zT\|_{L^2}^2
=0,\label{30}
\end{equation*}
here and what follows, we use $(\cdot,\cdot)_{L^2}$ to denote the $L^2(\Omega)$ inner product.
Combining the above two and using the Young inequality lead to
\begin{equation*}
\begin{aligned}
&\frac{d}{dt}\big(\|v\|_{L^2}^2+\|T\|_{L^2}^2\big)+\frac{1}{R_T}\|\nabla_HT\|_{L^2}^2+\varepsilon\|\partial_zT\|_{L^2}^2\\
&\qquad\qquad+\frac{\alpha_v}{R_T}\|T\|_{L^2(\Gamma_s)}^2+\frac{1}{Re_1}\|\nabla_Hv\|_{L^2}^2+\frac{1}{Re_2}\|\partial_zv\|_{L^2}^2
\leq C\|v\|_{L^2}^2,
\end{aligned}
\end{equation*}
from which, by the Gr\"onwall inequality, the conclusion follows.
\end{proof}

Decompose
\begin{equation*}
v=\widetilde{v}+\overline{v},
\end{equation*}
where
\begin{equation*}
\overline{v}:=\frac{1}{h}\int_{-h}^0vdz\qquad\textrm{and}\qquad\widetilde{v}:=v-\overline{v}.
\end{equation*}
One can check that $\overline{v}$ satisfies
\begin{equation}
\begin{aligned}
&\ \ \ \partial_t\overline{v}+\nabla_Hp_s-\frac{1}{Re_1}\Delta_H\overline{v}\\
&=-(\overline{v}\cdot\nabla_H)\overline{v}-\frac{1}{h}\int_{-h}^0\Big((\widetilde{v}\cdot\nabla_H)\widetilde{v}+(\nabla_H\cdot\widetilde{v})\widetilde{v}\Big)dz\\
&\ \ \ +\frac{1}{h}\int_{-h}^0\left(\nabla_H\int_{-h}^zTd\xi\right)dz-fk\times\overline{v},\ \ \ \qquad ~\textrm{in}~M,
\end{aligned}\label{32}
\end{equation}
$\nabla_H\cdot\overline{v}=0$ in $M$,
and the following boundary conditions
\begin{equation}
\begin{aligned}
&\overline{v}=0\ \ \ \textrm{on}~\partial M.\label{33}
\end{aligned}
\end{equation}
For $\widetilde{v}$, one can check that it satisfies
\begin{equation}
\begin{aligned}
&\ \ \ \partial_t\widetilde{v}-\frac{1}{Re_1}\Delta_H\widetilde{v}-\frac{1}{Re_2}\partial_z^2\widetilde{v}\\
&=-(\widetilde{v}\cdot\nabla_H)\widetilde{v}-(\widetilde{v}\cdot\nabla_H)\overline{v}-(\overline{v}\cdot\nabla_H)\widetilde{v}+\left(\int_{-h}^z\nabla_H\cdot\widetilde{v}d\xi\right)\partial_z\widetilde{v}\\
&\ \ \ +\nabla_H\int_{-h}^zTd\xi-\frac{1}{h}\int_{-h}^0\left(\nabla_H\int_{-h}^zTd\xi\right) dz-fk\times\widetilde{v}\\
&\ \ \ +\frac{1}{h}\int_{-h}^0\Big((\widetilde{v}\cdot\nabla_H)\widetilde{v}+(\nabla_H\cdot\widetilde{v})\widetilde{v}\Big)dz,\ \ \  ~\qquad \textrm{in}~\Omega,\label{34}
\end{aligned}
\end{equation}
and the following boundary conditions
\begin{equation}
\begin{aligned}
&\widetilde{v}=0\ \ \ \ \ \ \ \ \ \ \ \ \textrm{on}~\Gamma_s,\\
&\partial_z\widetilde{v}=0\ \ \ \ \ \ \ \ \  \textrm{on}~\Gamma_b,\\
&\partial_z\widetilde{v}=0\ \ \ \ \ \ \ \ \  \textrm{on}~\Gamma_u.\label{35}
\end{aligned}
\end{equation}

The next proposition states the $L^\infty(0,\mathfrak{T};L^{3+\delta}(\Omega))$ estimate for $\widetilde{v}$ with $\delta\in(0,1]$.
\begin{proposition}\label{pro3.2}
For any positive time $\mathfrak{T}\in(0,t^*_\varepsilon)$ and for any positive number $\delta\in(0,1]$, the following holds
$$
\sup_{0\leq t\leq\mathfrak{T}}\|\widetilde{v}\|_{L^{3+\delta}}^{3+\delta}(t) \leq e^{C\int_{0}^{\mathfrak{T}}K_1(t)dt}\big(\|\widetilde{v}_0\|_{L^{3+\delta}}^{3+\delta}+1\big)
$$
for a positive constant $C$ depending only on $\delta, h$, and $Re_1$, where
\begin{equation*}
K_1(t):=\|\nabla_H v\|_{L^2}^2\big(\|v\|_{L^2}^2+1\big)+\big(\|T\|_{L^2}^2+1\big)\big(\|\nabla_H T\|_{L^2}+1\big).
\end{equation*}
\end{proposition}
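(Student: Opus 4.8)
The plan is to establish the $L^{3+\delta}$ bound on $\widetilde v$ by testing the evolution equation \eqref{34} against $|\widetilde v|^{1+\delta}\widetilde v$ and running a Gr\"onwall argument whose integrating factor is controlled by the low-order quantities already bounded in Proposition \ref{pro3.1}. First I would multiply \eqref{34} by $|\widetilde v|^{1+\delta}\widetilde v$ and integrate over $\Omega$. The time derivative yields $\frac{1}{3+\delta}\frac{d}{dt}\|\widetilde v\|_{L^{3+\delta}}^{3+\delta}$, while the dissipative terms $-\frac{1}{Re_1}\Delta_H\widetilde v$ and $-\frac{1}{Re_2}\partial_z^2\widetilde v$, after integration by parts and using the boundary conditions \eqref{35} (so no boundary contributions survive), produce the coercive quantity
\begin{equation*}
\frac{c}{Re_1}\big\||\widetilde v|^{\frac{1+\delta}{2}}\nabla_H\widetilde v\big\|_{L^2}^2+\frac{c}{Re_2}\big\||\widetilde v|^{\frac{1+\delta}{2}}\partial_z\widetilde v\big\|_{L^2}^2
\end{equation*}
for some $c=c(\delta)>0$. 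The bulk of the work is then to absorb each of the nonlinear and forcing terms on the right-hand side of \eqref{34} into this good dissipation plus a term of the form $K_1(t)(\|\widetilde v\|_{L^{3+\delta}}^{3+\delta}+1)$.

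The delicate terms are the quadratic velocity interactions, especially the vertical transport $\big(\int_{-h}^z\nabla_H\cdot\widetilde v\,d\xi\big)\partial_z\widetilde v$ and the horizontal terms $(\widetilde v\cdot\nabla_H)\widetilde v$, $(\widetilde v\cdot\nabla_H)\overline v$, $(\overline v\cdot\nabla_H)\widetilde v$ together with their $z$-averaged counterparts. For these I would invoke Lemma \ref{lemma1}, which is precisely designed to handle the anisotropic vertical-integral structure: writing the integrands as products with one factor integrated in $z$, the lemma converts the spatial integrals into products of $L^2$ norms times $(\|\cdot\|_{L^2}/L+\|\nabla_H\cdot\|_{L^2})$ factors. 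The strategy is to split the power $|\widetilde v|^{2+\delta}$ appearing in these terms so that a factor $|\widetilde v|^{\frac{1+\delta}{2}}$ pairs with a gradient to reconstruct the dissipation, and then use Young's inequality to peel off an $\epsilon$-fraction of $\||\widetilde v|^{\frac{1+\delta}{2}}\nabla_H\widetilde v\|_{L^2}^2$ and $\||\widetilde v|^{\frac{1+\delta}{2}}\partial_z\widetilde v\|_{L^2}^2$, with the remaining factors collected into the coefficient $K_1(t)$. The temperature forcing terms $\nabla_H\int_{-h}^zT\,d\xi$ (and its average) are lower order: they are linear in $T$, so a Cauchy--Schwarz/H\"older estimate against $|\widetilde v|^{1+\delta}\widetilde v$ followed by Young's inequality produces contributions bounded by $\big(\|T\|_{L^2}^2+1\big)\big(\|\nabla_H T\|_{L^2}+1\big)\big(\|\widetilde v\|_{L^{3+\delta}}^{3+\delta}+1\big)$, matching the second group in $K_1$, after using that $\delta\le1$ keeps the exponents in a favorable range. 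The Coriolis term $fk\times\widetilde v$ is harmless since $k\times\widetilde v$ is orthogonal to $\widetilde v$ pointwise, so its pairing vanishes or is trivially bounded.

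After all nonlinearities are absorbed, the differential inequality takes the form
\begin{equation*}
\frac{d}{dt}\|\widetilde v\|_{L^{3+\delta}}^{3+\delta}\le CK_1(t)\big(\|\widetilde v\|_{L^{3+\delta}}^{3+\delta}+1\big),
\end{equation*}
and the claimed bound follows from Gr\"onwall's inequality, noting that $\int_0^{\mathfrak T}K_1(t)\,dt$ is finite because Proposition \ref{pro3.1} controls $\sup_{t}\|(v,T)\|_{L^2}^2$ and $\int_0^{\mathfrak T}\big(\|\nabla_H v\|_{L^2}^2+\|\nabla_H T\|_{L^2}^2\big)dt$. I expect the main obstacle to be the bookkeeping in the vertical-transport term: since $\partial_z\widetilde v$ is paired against $|\widetilde v|^{1+\delta}\widetilde v$ and multiplied by the vertically integrated horizontal divergence, one must carefully apply Lemma \ref{lemma1} with the right choice of which factor carries the $z$-integral, and verify that the resulting gradient factors can be split as $\frac{1}{2}+\frac{1}{2}$ between $\||\widetilde v|^{\frac{1+\delta}{2}}\nabla_H\widetilde v\|_{L^2}$ and $\||\widetilde v|^{\frac{1+\delta}{2}}\partial_z\widetilde v\|_{L^2}$ so that both pieces of the dissipation get used and nothing stronger than $\nabla_H v\in L^2$ is demanded of the remaining low-order factor.
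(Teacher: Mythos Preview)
Your overall strategy---test \eqref{34} against $|\widetilde v|^{1+\delta}\widetilde v$, absorb nonlinearities into the weighted dissipation, and close by Gr\"onwall---is exactly the paper's approach. But you have misidentified the main difficulty. The two terms you single out as ``the main obstacle,'' namely $(\widetilde v\cdot\nabla_H)\widetilde v$ and $\big(\int_{-h}^z\nabla_H\cdot\widetilde v\,d\xi\big)\partial_z\widetilde v$, in fact \emph{cancel exactly} when paired with $|\widetilde v|^{1+\delta}\widetilde v$. Writing $w=-\int_{-h}^z\nabla_H\cdot\widetilde v\,d\xi$ (recall $\nabla_H\cdot\overline v=0$), one has $\nabla_H\cdot\widetilde v+\partial_z w=0$, and hence
\[
\int_\Omega\big[(\widetilde v\cdot\nabla_H)\widetilde v+w\partial_z\widetilde v\big]\cdot|\widetilde v|^{1+\delta}\widetilde v\,d\Omega
=\frac{1}{3+\delta}\int_\Omega(\widetilde v\cdot\nabla_H+w\partial_z)|\widetilde v|^{3+\delta}\,d\Omega=0
\]
after integration by parts and the boundary conditions. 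Consequently the paper never uses the vertical weighted dissipation $\int_\Omega|\partial_z\widetilde v|^2|\widetilde v|^{1+\delta}d\Omega$ to absorb anything; only the horizontal piece is needed. Your plan to estimate the vertical transport directly via Lemma~\ref{lemma1} would actually fail: whichever way you assign the roles of $\phi,\varphi,\psi$, the lemma forces a horizontal derivative onto either $\partial_z\widetilde v$ or $\nabla_H\widetilde v$, producing $\nabla_H\partial_z\widetilde v$ or $\nabla_H^2\widetilde v$, neither of which is available at this stage of the argument.

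A secondary point: the paper does not use Lemma~\ref{lemma1} in this proof. After the cancellation above, five terms remain (two cross terms involving $\overline v$, the $z$-averaged quadratic term, and two temperature terms). For these the paper works slice by slice: it applies H\"older in $(x,y)$, swaps the $z$-integral outside via Minkowski, and then invokes the 2D Ladyzhenskaya or Gagliardo--Nirenberg inequality on $M$ together with Poincar\'e (available since $\widetilde v=\overline v=0$ on $\partial M$). This is what allows factors like $\||\widetilde v|^{(3+\delta)/2}\|_{L^4(M)}$ to be controlled by $\|\widetilde v\|_{L^{3+\delta}(M)}^{(3+\delta)/4}\||\widetilde v|^{(1+\delta)/2}\nabla_H\widetilde v\|_{L^2(M)}^{1/2}$, closing back onto the horizontal dissipation and $\|\widetilde v\|_{L^{3+\delta}}$. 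Lemma~\ref{lemma1} could likely substitute for some of these steps, but the crucial structural input you are missing is the transport cancellation, not a clever splitting of dissipation.
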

\begin{proof}
Multiplying \eqref{34} with $|\widetilde{v}|^{1+\delta}\widetilde{v}$ and integrating the resulting over $\Omega$, it follows from integrating by parts and the boundary conditions \eqref{35} that
\begin{align*}
\frac{1}{3+\delta}\frac{d}{dt}\|\widetilde{v}\|_{L^{3+\delta}}^{3+\delta}&+\frac{1}{Re_1}
\int_\Omega[|\nabla_H\widetilde{v}|^2|\widetilde{v}|^{1+\delta}+(1+\delta)
\big|\nabla_H|\widetilde{v}|\big|^2|\widetilde{v}|^{1+\delta}]d\Omega\\
&+\frac{1}{Re_2}\int_\Omega[|\partial_z\widetilde{v}|^2|\widetilde{v}|^{1+\delta}+(1+\delta) \big|\partial_z|\widetilde{v}|\big|^2|\widetilde{v}|^{1+\delta}]d\Omega\\
=&-\int_\Omega (\widetilde{v}\cdot\nabla_H)\overline{v}\cdot|\widetilde{v}|^{1+\delta}\widetilde{v}~d\Omega-\int_\Omega(\overline{v}\cdot\nabla_H)\widetilde{v}\cdot|\widetilde{v}|^{1+\delta}\widetilde{v}~d\Omega\\
&+\frac{1}{h}\int_\Omega\left[\int_{-h}^0\Big((\widetilde{v}\cdot\nabla_H)\widetilde{v}+(\nabla_H\cdot\widetilde{v})\widetilde{v}\Big)dz\right]\cdot|\widetilde{v}|^{1+\delta}\widetilde{v}~d\Omega\\
&-\frac{1}{h}\int_\Omega\left[\int_{-h}^0\left(\nabla_H\int_{-h}^zTd\xi\right) dz\right]\cdot|\widetilde{v}|^{1+\delta}\widetilde{v}~d\Omega\\
&+\int_\Omega\left(\nabla_H\int_{-h}^zTd\xi\right)\cdot|\widetilde{v}|^{1+\delta}\widetilde{v}~d\Omega
=:\sum_{i=1}^5 I_i.
\end{align*}
By integrating by parts and using the H\"older, Ladyzhenskaya, and Minkowski inequalities, one deduces
\begin{equation*}
\begin{aligned}
& |I_1|+|I_2|
\leq C_\delta\int_\Omega |\overline{v}||\nabla_H\widetilde{v}||\widetilde{v}|^{2+\delta}d\Omega\\
\leq& C_\delta\int_M |\overline{v}|\left(\int_{-h}^0|\nabla_H\widetilde{v}|^2|\widetilde{v}|^{1+\delta}dz\right)^{\frac{1}{2}}\left(\int_{-h}^0|\widetilde{v}|^{3+\delta}dz\right)^{\frac{1}{2}}dxdy\\
\leq& C_\delta \left(\int_\Omega|\nabla_H\widetilde{v}|^2|\widetilde{v}|^{1+\delta}d\Omega\right)^{\frac{1}{2}}\left(\int_M|\overline{v}|^4 dxdy\right)^{\frac{1}{4}}\left[\int_M\left(\int_{-h}^0|\widetilde{v}|^{3+\delta}dz\right)^2dxdy\right]^{\frac{1}{4}}\\
\leq& C_\delta \left(\int_\Omega|\nabla_H\widetilde{v}|^2|\widetilde{v}|^{1+\delta}d\Omega\right)^{\frac{1}{2}}
\|\overline{v}\|_{L^2(M)}^{\frac{1}{2}}\|\nabla_H\overline{v}\|_{L^2(M)}^{\frac{1}{2}}\\
&\times\left[\int_{-h}^0\left(\int_M|\widetilde{v}|^{6+2\delta}dxdy\right)^{\frac{1}{2}}dz\right]^{\frac{1}{2}},\\
\end{aligned}
\end{equation*}
where the Poincar\'e inequality guaranteed by the boundary condition $\overline{v}|_{\partial M}=0$ has been used. By the Ladyzhenskaya inequality, it has
\begin{align*}
&\ \ \ \left[\int_{-h}^0\left(\int_M|\widetilde{v}|^{6+2\delta}dxdy\right)^{\frac{1}{2}}dz\right]^{\frac{1}{2}}
=\left(\int_{-h}^0\Big\||\widetilde{v}|^{\frac{3}{2}+\frac{\delta}{2}}\Big\|_{L^4(M)}^2dz\right)^{\frac{1}{2}}\\
&\leq C_\delta\left[\int_{-h}^0\left(\int_M|\widetilde{v}|^{3+\delta}dxdy\right)^{\frac{1}{2}}\left(\int_M\big|\nabla_H|\widetilde{v}|\big|^2|\widetilde{v}|^{1+\delta}dxdy\right)^{\frac{1}{2}}dz\right]^{\frac{1}{2}}\\
&\leq C_\delta\|\widetilde{v}\|_{L^{3+\delta}}^{\frac{3+\delta}{4}}\left(\int_\Omega\big|\nabla_H|\widetilde{v}|\big|^2|\widetilde{v}|^{1+\delta}d\Omega\right)^{\frac{1}{4}},
\end{align*}
where again the Poincar\'e inequality has been used. Therefore,
\begin{equation*}
\begin{aligned}
|I_1|+|I_2|\leq C_\delta \left(\int_\Omega|\nabla_H\widetilde{v}|^2|\widetilde{v}|^{1+\delta}d\Omega\right)^{\frac{3}{4}}\|\overline{v}\|_{L^2(M)}^{\frac{1}{2}}\|\nabla_H\overline{v}\|_{L^2(M)}^{\frac{1}{2}}\|\widetilde{v}\|_{L^{3+\delta}}^{\frac{3+\delta}{4}}.
\end{aligned}
\end{equation*}
For $I_3$, integrating by parts and the H\"older inequality yield
\begin{align*}
|I_3|&= \frac{1}{h}\Bigg|\int_\Omega\left(\int_{-h}^0\widetilde{v}\otimes\widetilde{v}dz\right):\nabla_H(|\widetilde{v}|^{1+\delta}\widetilde{v})d\Omega\Bigg|\\
&\leq C_\delta\int_M\left(\int_{-h}^0|\widetilde{v}|^2dz\right)\left(\int_{-h}^0|\widetilde{v}|^{1+\delta}|\nabla_H\widetilde{v}|dz\right)dxdy\\
&\leq C_\delta\int_M\left(\int_{-h}^0|\widetilde{v}|^2dz\right)\left(\int_{-h}^0|\nabla_H\widetilde{v}|^2|\widetilde{v}|^{1+\delta}dz\right)^{\frac{1}{2}}\left(\int_{-h}^0|\widetilde{v}|^{1+\delta}dz\right)^{\frac{1}{2}}dxdy\\
&\leq C_\delta\left[\int_M\left(\int_{-h}^0|\widetilde{v}|^2dz\right)^3dxdy\right]^{\frac{1}{3}}\left(\int_\Omega|\nabla_H\widetilde{v}|^2|\widetilde{v}|^{1+\delta}d\Omega\right)^{\frac{1}{2}}\\
&\qquad\times\left[\int_M\left(\int_{-h}^0|\widetilde{v}|^{1+\delta}dz\right)^3dxdy\right]^{\frac{1}{6}}.
\end{align*}
By the Minkowski, Gagliardo--Nirenberg, and Poincar\'e inequalities, one deduces
\begin{align*}
&\left[\int_M\left(\int_{-h}^0|\widetilde{v}|^2dz\right)^3dxdy\right]^{\frac{1}{3}}\leq \int_{-h}^0\left(\int_M|\widetilde{v}|^6dxdy\right)^{\frac{1}{3}}dz\\
\leq& \int_{-h}^0\|\nabla_H \widetilde{v}\|_{L^2(M)}^{\frac{3-\delta}{3}}\|\widetilde{v}\|_{L^{3+\delta}(M)}^{\frac{3+\delta}{3}}dz\leq C_\delta\|\nabla_H\widetilde{v}\|_{L^2(\Omega)}^{\frac{3-\delta}{3}}\|\widetilde{v}\|_{L^{3+\delta}}^{\frac{3+\delta}{3}} 
\end{align*}
and
\begin{eqnarray*}
\left[\int_M\left(\int_{-h}^0|\widetilde{v}|^{1+\delta}dz\right)^3dxdy\right]^{\frac{1}{6}}\leq \left[\int_{-h}^0\left(\int_M|\widetilde{v}|^{3+3\delta}dxdy\right)^{\frac{1}{3}}dz\right]^{\frac{1}{2}}\\
\leq \left(\int_{-h}^0\|\nabla_H\widetilde{v}\|_{L^2(M)}^{\frac{2\delta}{3}}\|\widetilde{v}\|_{L^{3+\delta}(M)}^{\frac{3+\delta}{3}}dz\right)^{\frac{1}{2}}
\leq C_\delta\|\nabla_H\widetilde{v}\|_{L^2}^{\frac{\delta}{3}}\|\widetilde{v}\|_{L^{3+\delta}}^{\frac{3+\delta}{6}},
\end{eqnarray*}
where $0<\delta\leq1$ has been used. Therefore,
\begin{equation*}
\begin{aligned}
|I_3|\leq C_\delta\left(\int_\Omega|\nabla_H\widetilde{v}|^2|\widetilde{v}|^{1+\delta}d\Omega\right)^{\frac{1}{2}}
\|\nabla_H\widetilde{v}\|_{L^2}\|\widetilde{v}\|_{L^{3+\delta}}^{\frac{3+\delta}{2}}.
\end{aligned}
\end{equation*}
Integrating by parts and using the H\"older, Minkowski and Ladyzhenskaya inequalities, one deduces
\begin{align*}
&|I_4|+|I_5|\\
\leq & C_\delta\int_M\left(\int_{-h}^0|T|dz\right)\left(\int_{-h}^0|\nabla_H\widetilde{v}||\widetilde{v}|^{1+\delta}dz\right)dxdy\\
\leq& C_\delta \int_M\left(\int_{-h}^0|T|dz\right)\left(\int_{-h}^0|\nabla_H\widetilde{v}|^2|\widetilde{v}|^{1+\delta}dz\right)^{\frac{1}{2}}\left(\int_{-h}^0|\widetilde{v}|^{1+\delta}dz\right)^{\frac{1}{2}}dxdy\\
\leq& C_\delta\left(\int_\Omega|\nabla_H\widetilde{v}|^2|\widetilde{v}|^{1+\delta}d\Omega\right)^{\frac{1}{2}}\left[\int_M\left(\int_{-h}^0|T|dz\right)^4dxdy\right]^{\frac{1}{4}}\\
&\times\left[\int_M\left(\int_{-h}^0|\widetilde{v}|^{1+\delta}dz\right)^2dxdy\right]^{\frac{1}{4}}\\
\leq & C_\delta\left(\int_\Omega|\nabla_H\widetilde{v}|^2|\widetilde{v}|^{1+\delta}d\Omega\right)^{\frac{1}{2}}\int_{-h}^0\left(\int_M|T|^4 dxdy\right)^{\frac{1}{4}}dz \\ &\times\left[\int_{-h}^0\left(\int_M|\widetilde{v}|^{2+2\delta}dxdy\right)^{\frac{1}{2}}dz\right]^{\frac{1}{2}}\\
\leq& C_\delta\left(\int_\Omega|\nabla_H\widetilde{v}|^2|\widetilde{v}|^{1+\delta}
d\Omega\right)^{\frac{1}{2}}\|T\|_{L^2(\Omega)}^{\frac{1}{2}}\|(T,\nabla_HT)\|_{L^2}^{\frac{1}{2}}
\|\widetilde{v}\|_{L^{3+\delta}}^{\frac{1+\delta}{2}},
\end{align*}
where $\delta\in(0,1]$ has been used.
Combining the above estimates and applying the Young inequality yield
\begin{align*}
&\ \ \ \frac{d}{dt}\|\widetilde{v}\|_{L^{3+\delta}}^{3+\delta}+\frac{1}{Re_1}\int_\Omega|\nabla_H\widetilde{v}|^2|\widetilde{v}|^{1+\delta}d\Omega+\frac{1}{Re_1}\int_\Omega\big|\nabla_H|\widetilde{v}|\big|^2|\widetilde{v}|^{1+\delta}d\Omega\\
&\ \ \ \ \ \ \ \ \ \qquad \ \ +\frac{1}{Re_2}\int_\Omega|\partial_z\widetilde{v}|^2|\widetilde{v}|^{1+\delta}d\Omega+\frac{1}{Re_2}\int_\Omega\big|\partial_z|\widetilde{v}|\big|^2|\widetilde{v}|^{1+\delta}d\Omega\\
&\leq C_\delta\big(\|v\|_{L^2}^2\|\nabla_H v\|_{L^2}^2+\|(\nabla_H v,T)\|_{L^2}^2+\|T\|_{L^2}\|\nabla_H T\|_{L^2}  +1\big)\big(\|\widetilde{v}\|_{L^{3+\delta}}^{3+\delta}+1\big),\label{59}
\end{align*}
from which, by the Gr\"onwall inequality, the conclusion follows.
\end{proof}

\subsection{Estimates of $(\partial_zv,\partial_zT)$}

Thanks to  Proposition \ref{pro3.2}, one can get the following estimate of $\partial_zv$.
\begin{proposition}\label{pro3.3}
For any positive time $\mathfrak{T}\in(0,t^*_\varepsilon)$, the following holds
\begin{eqnarray*}
\sup_{0\leq t\leq \mathfrak{T}}\|\partial_z v\|_{L^2}^2(t)+\int_{0}^{\mathfrak{T}}\left(\frac{1}{Re_1}\|\nabla_H\partial_z v\|_{L^2}^2+\frac{1}{Re_2}\|\partial_z^2 v\|_{L^2}^2\right)dt\\
\leq e^{C\int_{0}^{\mathfrak{T}}K_2(t)dt}\left(\|\partial_zv_0\|_{L^2}^2+C\int_{0}^{\mathfrak{T}}\|\nabla_H T\|_{L^2}^2dt\right) \label{37}
\end{eqnarray*}
for a positive constant $C$ depending only on $h, Re_1,$ and $Re_2$, where
\begin{equation*}
K_2(t):=\|v\|_{L^2}^2\|\nabla_Hv\|_{L^2}^2+\|\widetilde{v}\|_{L^{3+\delta}}^{2+\frac{6}{\delta}}+1.
\end{equation*}
\end{proposition}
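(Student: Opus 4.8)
The plan is to differentiate the momentum equation \eqref{19} with respect to $z$, test the resulting equation with $\partial_z v$, and close a Grönwall estimate. Writing $u:=\partial_z v$ and using that $\partial_z\nabla_H p_s=0$ while $\partial_z\nabla_H\int_{-h}^zT\,d\xi=\nabla_H T$, differentiation of \eqref{19} in $z$ (applying the product rule to $(v\cdot\nabla_H)v$ and to the vertical-advection term) gives
\begin{equation*}
\partial_t u-\frac{1}{Re_1}\Delta_H u-\frac{1}{Re_2}\partial_z^2 u+(u\cdot\nabla_H)v+(v\cdot\nabla_H)u-(\nabla_H\cdot v)u-\Big(\int_{-h}^z\nabla_H\cdot v\,d\xi\Big)\partial_z u+fk\times u=\nabla_H T.
\end{equation*}
The crucial structural point is that $u=\partial_z v$ vanishes on \emph{all} of $\partial\Omega$: on $\Gamma_u\cup\Gamma_b$ by \eqref{24}, and on $\Gamma_s$ because $v=0$ there. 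Hence testing with $u$ produces no boundary terms under integration by parts, the Coriolis term drops by antisymmetry, and after integrating by parts $(v\cdot\nabla_H)u\cdot u$ and $(\int_{-h}^z\nabla_H\cdot v\,d\xi)\,\partial_z u\cdot u$ and combining them with $(\nabla_H\cdot v)|u|^2$, I obtain
\begin{equation*}
\frac12\frac{d}{dt}\|u\|_{L^2}^2+\frac{1}{Re_1}\|\nabla_H u\|_{L^2}^2+\frac{1}{Re_2}\|\partial_z u\|_{L^2}^2=(\nabla_H T,u)_{L^2}-\int_\Omega(u\cdot\nabla_H v)\cdot u\,d\Omega+\int_\Omega(\nabla_H\cdot v)|u|^2\,d\Omega.
\end{equation*}

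The forcing is immediate: $(\nabla_H T,u)_{L^2}\le\frac12\|\nabla_H T\|_{L^2}^2+\frac12\|u\|_{L^2}^2$, the first piece becoming the explicit $\int_0^{\mathfrak T}\|\nabla_H T\|_{L^2}^2\,dt$ in the conclusion. It then remains to bound the two cubic terms $J_1:=-\int_\Omega(u\cdot\nabla_H v)\cdot u\,d\Omega$ and $J_2:=\int_\Omega(\nabla_H\cdot v)|u|^2\,d\Omega$. Here I would use the splitting $v=\overline v+\widetilde v$ together with the facts $u=\partial_z v=\partial_z\widetilde v$ and $\nabla_H\cdot\overline v=0$. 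The latter annihilates the barotropic part of $J_2$, so $J_2=\int_\Omega(\nabla_H\cdot\widetilde v)|u|^2\,d\Omega$, while $J_1$ splits into a barotropic piece $J_{1a}=-\int_\Omega(u\cdot\nabla_H\overline v)\cdot u\,d\Omega$ and a baroclinic piece $J_{1b}=-\int_\Omega(u\cdot\nabla_H\widetilde v)\cdot u\,d\Omega$.

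For the barotropic piece $J_{1a}$, since $\overline v$ is independent of $z$ I would apply Lemma \ref{lemma1} with the $z$-independent factor $\nabla_H\overline v$ and $\varphi=\psi=u$, obtaining $|J_{1a}|\le C\|\nabla_H\overline v\|_{L^2(M)}\|u\|_{L^2}(\|u\|_{L^2}+\|\nabla_H u\|_{L^2})$; Young then absorbs this as $\frac{1}{4Re_1}\|\nabla_H u\|_{L^2}^2+C\|\nabla_H\overline v\|_{L^2(M)}^2\|u\|_{L^2}^2$, and $\|\nabla_H\overline v\|_{L^2(M)}^2\le C\|\nabla_H v\|_{L^2}^2$ is an integrable-in-time Grönwall coefficient by Proposition \ref{pro3.1}, of the same type as the first term of $K_2$. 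The baroclinic terms $J_{1b}$ and $J_2$ are the heart of the matter. I would integrate by parts in the horizontal variables to move $\nabla_H$ off $\widetilde v$ (legitimate since $\widetilde v=0$ on $\Gamma_s$), reducing both to the model quantity $\int_\Omega|\widetilde v|\,|u|\,|\nabla_H u|\,d\Omega$. Hölder with exponents $(3+\delta,q,2)$, where $\frac1q=\frac12-\frac1{3+\delta}$, followed by the three-dimensional Gagliardo--Nirenberg inequality $\|u\|_{L^q}\le C\|u\|_{L^2}^{\delta/(3+\delta)}\|\nabla u\|_{L^2}^{3/(3+\delta)}$ and the Poincaré inequality (again using $u|_{\partial\Omega}=0$), yields a bound $C\|\widetilde v\|_{L^{3+\delta}}\|u\|_{L^2}^{\delta/(3+\delta)}\|\nabla u\|_{L^2}^{(6+\delta)/(3+\delta)}$. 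Since $\frac{6+\delta}{3+\delta}<2$ for $\delta>0$, a final Young inequality with conjugate exponent $\frac{2(3+\delta)}{\delta}=2+\frac6\delta$ absorbs the gradient into the dissipation and leaves exactly $C\|\widetilde v\|_{L^{3+\delta}}^{2+6/\delta}\|u\|_{L^2}^2$, the second term of $K_2$.

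Collecting everything gives $\frac{d}{dt}\|u\|_{L^2}^2+\frac1{Re_1}\|\nabla_H u\|_{L^2}^2+\frac1{Re_2}\|\partial_z u\|_{L^2}^2\le CK_2(t)\|u\|_{L^2}^2+C\|\nabla_H T\|_{L^2}^2$, whence the Grönwall inequality produces the stated estimate after recalling $u=\partial_z v$, $\partial_z u=\partial_z^2 v$ and $\nabla_H u=\nabla_H\partial_z v$. I expect the main obstacle to be the baroclinic cubic term: one must commit to integrating by parts onto $\nabla_H\partial_z v$ rather than leaving a derivative on $v$ (which would require uncontrolled second horizontal derivatives), and then track the Hölder/Gagliardo--Nirenberg exponents precisely so that the power landing on $\|\widetilde v\|_{L^{3+\delta}}$ is $2+\frac6\delta$. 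This is exactly where the argument needs the $L^{3+\delta}$ control of $\widetilde v$ from Proposition \ref{pro3.2} (a bare $H^1$ bound on $\widetilde v$ would instead give the non-integrable coefficient $\|\nabla_H\widetilde v\|_{L^2}^4$), and where both dissipation terms $\|\nabla_H\partial_z v\|_{L^2}^2$ and $\|\partial_z^2 v\|_{L^2}^2$ are required to control the full gradient $\|\nabla u\|_{L^2}^2$ appearing after Gagliardo--Nirenberg.
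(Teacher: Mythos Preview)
Your argument is correct and follows essentially the same route as the paper. The paper tests \eqref{19} directly with $-\partial_z^2 v$ rather than first differentiating in $z$ and then testing with $\partial_z v$; these are equivalent after an integration by parts in $z$ (the boundary terms vanish by $\partial_z v|_{\Gamma_u\cup\Gamma_b}=0$), and both arrive at the same energy identity with the cubic terms $\int_\Omega(\nabla_H\cdot v)|\partial_z v|^2-\int_\Omega(\partial_z v\cdot\nabla_H)v\cdot\partial_z v$ and the forcing $(\nabla_H T,\partial_z v)_{L^2}$.

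There is one small divergence worth noting. For the barotropic piece, the paper first integrates by parts on the \emph{whole} cubic term to reach the model quantity $\int_\Omega|\nabla_H\partial_z v|\,|v|\,|\partial_z v|\,d\Omega$ and only then splits $v=\overline v+\widetilde v$; Lemma~\ref{lemma1} applied to the $\overline v$ part then produces the factor $\|v\|_{L^2}^{1/2}\|\nabla_H v\|_{L^2}^{1/2}$, and Young with exponents $(4/3,4)$ gives exactly the coefficient $\|v\|_{L^2}^2\|\nabla_H v\|_{L^2}^2$ appearing in $K_2$. Your route---keeping $\nabla_H\overline v$ and applying Lemma~\ref{lemma1} with $\varphi=\psi=u$---is perfectly valid but lands on the Gr\"onwall coefficient $\|\nabla_H v\|_{L^2}^2$, which is \emph{not} dominated by $K_2$ as written (consider $\|v\|_{L^2}$ small). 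If you want the proposition verbatim, follow the paper's order of operations for the barotropic part; otherwise your bound yields an equally usable (in fact slightly cleaner) estimate with $K_2$ replaced by $\|\nabla_H v\|_{L^2}^2+\|\widetilde v\|_{L^{3+\delta}}^{2+6/\delta}+1$. The baroclinic estimate and the Gagliardo--Nirenberg/Young step producing the exponent $2+6/\delta$ match the paper exactly.
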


\begin{proof}
Multiplying equation \eqref{19} with $-\partial_z^2 v$  and integrating over $\Omega$, it follows from integrating by parts that
\begin{equation*}
\begin{aligned}
&\ \ \  \frac{1}{2}\frac{d}{dt}\|\partial_z v\|_{L^2}^2+\frac{1}{Re_1}\|\nabla_H\partial_z v\|_{L^2}^2+\frac{1}{Re_2}\|\partial_z^2 v\|_{L^2}^2\\
&=\int_\Omega (v\cdot\nabla_H)v\cdot\partial_z^2v~d\Omega-\int_\Omega\left(\int_{-h}^z\nabla_H\cdot vd\xi\right)\partial_zv\cdot\partial_z^2v~d\Omega\\
&\qquad-\int_\Omega\left(\nabla_H\int_{-h}^zTd\xi\right)\cdot\partial_z^2v~d\Omega:=\sum_{i=1}^3I_i.
\end{aligned}
\end{equation*}
By integrating by parts, one deduces
\begin{equation*}
\begin{aligned}
I_1+I_2&=\int_\Omega\left[\nabla_H\cdot v|\partial_zv|^2-(\partial_zv\cdot\nabla_H)v\cdot\partial_zv\right]d\Omega \leq C\int_\Omega |\nabla_H\partial_z v| |v| |\partial_z v|d\Omega\\
&\leq C\int_\Omega |\nabla_H\partial_z v| |\widetilde{v}| |\partial_z v|d\Omega+C\int_\Omega |\nabla_H\partial_z v| |\overline{v}| |\partial_z v|d\Omega :=I_{11}+I_{12}.
\end{aligned}
\end{equation*}
$I_{12}$ can be estimated by using Lemma \ref{lemma1} and the Poincar\'e inequality as
\begin{equation*}
\begin{aligned}
|I_{12}|&\leq C\int_M\left(\int_{-h}^0|v|dz\right)\left(\int_{-h}^0|\nabla_H\partial_z v||\partial_z v|dz\right)dxdy\\
&\leq C\|\nabla_H\partial_zv\|_{L^2}\|\partial_z v\|_{L^2}^{\frac{1}{2}}\|\nabla_H\partial_zv\|_{L^2}^{\frac{1}{2}}\|v\|_{L^2}^{\frac{1}{2}}\|\nabla_Hv\|_{L^2}^{\frac{1}{2}}\\
&\leq \frac{1}{8Re_1}\|\nabla_H\partial_zv\|_{L^2}^2+C\|\partial_z v\|_{L^2}^2\|v\|_{L^2}^2\|\nabla_H v\|_{L^2}^2.
\end{aligned}
\end{equation*}
For $I_{11}$, it follows form the H\"older, Gagliardo--Nirenberg, Poincar\'e and Young inequalities that
\begin{equation*}
\begin{aligned}
|I_{11}|&\leq C\|\widetilde{v}\|_{L^{3+\delta}}\|\nabla_H\partial_z v\|_{L^2}\|\partial_z v\|_{L^{\frac{2(3+\delta)}{1+\delta}}}\\
&\leq C\|\widetilde{v}\|_{L^{3+\delta}}\|\nabla_H\partial_z v\|_{L^2}\|\nabla\partial_z v\|_{L^2}^{\frac{3}{3+\delta}}\|\partial_z v\|_{L^2}^{\frac{\delta}{3+\delta}}\\
&\leq \frac{1}{16Re_1}\|\nabla_H\partial_z v\|_{L^2}^2+C\|\widetilde{v}\|_{L^{3+\delta}}^2\|\nabla\partial_z v\|_{L^2}^{\frac{6}{3+\delta}}\|\partial_z v\|_{L^2}^{\frac{2\delta}{3+\delta}}\\
&\leq \frac{1}{8Re_1}\|\nabla_H\partial_z v\|_{L^2}^2+\frac{1}{8Re_2}\|\partial_z^2 v\|_{L^2}^2+C\|\widetilde{v}\|_{L^{3+\delta}}^{2+\frac{6}{\delta}}\|\partial_z v\|_{L^2}^2.
\end{aligned}
\end{equation*}
Integrating by parts and by the H\"older inequality, it follows 
\begin{equation*}
|I_3|\leq C\|\nabla_H T\|_{L^2}\|\partial_z v\|_{L^2}.
\end{equation*}
Thus, one has
\begin{equation*}
\begin{aligned}
&\ \ \  \frac{d}{dt}\|\partial_z v\|_{L^2}^2+\frac{1}{Re_1}\|\nabla_H\partial_z v\|_{L^2}^2+\frac{1}{Re_2}\|\partial_z^2 v\|_{L^2}^2\\
&\leq C\|\partial_z v\|_{L^2}^2\big(\|v\|_{L^2}^2\|\nabla_Hv\|_{L^2}^2+\|\widetilde{v}\|_{L^{3+\delta}}^{2+\frac{6}{\delta}}+1\big)+C\|\nabla_H T\|_{L^2}^2,\label{60}
\end{aligned}
\end{equation*}
from which, by the Gr\"onwall inequality, the conclusion follows.
\end{proof}

Next, one can see that $\partial_z T$ is bounded in $L^\infty(0,\mathfrak{T};L^2(\Omega))$.
\begin{proposition}\label{pro3.4}
For any positive time $\mathfrak{T}\in(0,t^*_\varepsilon)$, the following holds
\begin{align*}
&\sup_{0\leq t\leq \mathfrak{T}}\|\partial_z T\|_{L^2}^2(t)+ \frac{1}{R_T}\int_{0}^{\mathfrak{T}} \|\nabla_H\partial_z T\|_{L^2}^2 dt\\
\leq& e^{C\int_{0}^{\mathfrak{T}}K_3(t)dt}\left(\|\partial_zT_0\|_{L^2}^2+C\int_{0}^{\mathfrak{T}}\|\nabla_HT\|_{L^2}^2dt\right),\label{38}
\end{align*}
for a positive constant $C$ depending only on $R_T$, where
\begin{equation*}
\begin{aligned}
&K_3(t):=1+\|\nabla_Hv\|_{L^2}^2+\|\partial_z\nabla_Hv\|_{L^2}^2+\|\partial_zv\|_{L^2}^2\|\partial_z\nabla_Hv\|_{L^2}^2.
\end{aligned}
\end{equation*}
\end{proposition}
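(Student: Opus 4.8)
The plan is to derive the evolution equation for $\theta:=\partial_zT$ and run an $L^2$ energy estimate in which $\|\nabla_HT\|_{L^2}^2$ plays the role of a source term — it is already controlled in $L^1_t$ by Proposition \ref{pro3.1} — while the horizontal dissipation $\frac{1}{R_T}\|\nabla_H\theta\|_{L^2}^2$ is used to absorb the bad part of the nonlinearities. First I would differentiate the temperature equation \eqref{21} in $z$. Writing $w=-\int_{-h}^z\nabla_H\cdot v\,d\xi$, so that $\partial_zw=-\nabla_H\cdot v$, this gives
\begin{equation*}
\partial_t\theta-\frac{1}{R_T}\Delta_H\theta-\varepsilon\partial_z^2\theta+v\cdot\nabla_H\theta+w\partial_z\theta+\partial_zv\cdot\nabla_HT-(\nabla_H\cdot v)\theta=0 .
\end{equation*}
The boundary conditions transfer cleanly: since $z$ is tangential to $\Gamma_s$, differentiating $\partial_nT=-\alpha_TT$ yields $\partial_n\theta=-\alpha_T\theta$ on $\Gamma_s$, while $\partial_zT=0$ on $\Gamma_u\cup\Gamma_b$ becomes the homogeneous Dirichlet condition $\theta=0$ there.

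Next I would multiply by $\theta$ and integrate over $\Omega$. Integration by parts turns the two diffusion terms into $\frac{1}{R_T}\|\nabla_H\theta\|_{L^2}^2+\frac{\alpha_T}{R_T}\|\theta\|_{L^2(\Gamma_s)}^2$ and $\varepsilon\|\partial_z\theta\|_{L^2}^2$, all with favorable sign, the endpoint contributions at $z=0,-h$ vanishing because $\theta=0$ there. The two transport terms cancel via the divergence-free relation $\nabla_H\cdot v+\partial_zw=0$ together with $v=0$ on $\Gamma_s$ and $w=0$ on $\Gamma_u\cup\Gamma_b$: indeed $\int_\Omega(v\cdot\nabla_H\theta)\theta\,d\Omega=-\tfrac12\int_\Omega(\nabla_H\cdot v)\theta^2\,d\Omega$ and $\int_\Omega(w\partial_z\theta)\theta\,d\Omega=\tfrac12\int_\Omega(\nabla_H\cdot v)\theta^2\,d\Omega$. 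Dropping the nonnegative $\Gamma_s$ term, one is left with
\begin{equation*}
\begin{aligned}
\frac{1}{2}\frac{d}{dt}\|\theta\|_{L^2}^2+\frac{1}{R_T}\|\nabla_H\theta\|_{L^2}^2+\varepsilon\|\partial_z\theta\|_{L^2}^2
&\leq\left|\int_\Omega(\partial_zv\cdot\nabla_HT)\theta\,d\Omega\right|+\left|\int_\Omega(\nabla_H\cdot v)\theta^2\,d\Omega\right|\\
&=:J_1+J_2 .
\end{aligned}
\end{equation*}

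The heart of the matter is estimating $J_1$ and $J_2$ using \emph{only} horizontal derivatives of $\theta$: I must never call upon $\partial_z\theta=\partial_z^2T$, whose only control is the $\varepsilon$-weighted term that degenerates when $\varepsilon\to0$ in Section 5. For each fixed $z$ I would apply the two-dimensional Ladyzhenskaya inequality on $M$ (equivalently, the anisotropic estimate of Lemma \ref{lemma1}), obtaining for $J_2$ a bound of the form $\int_M|\nabla_Hv|\,\theta^2\,dxdy\leq C\|\nabla_Hv\|_{L^2(M)}\|\theta\|_{L^2(M)}\|\theta\|_{H^1(M)}$, and for $J_1$ a bound $\int_M|\partial_zv||\nabla_HT||\theta|\,dxdy\leq C\|\nabla_HT\|_{L^2(M)}\|\partial_zv\|_{L^2(M)}^{1/2}\|\partial_zv\|_{H^1(M)}^{1/2}\|\theta\|_{L^2(M)}^{1/2}\|\theta\|_{H^1(M)}^{1/2}$, where $\|\cdot\|_{H^1(M)}$ involves only $\nabla_H$. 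Integrating in $z$ and controlling the pointwise-in-$z$ velocity norms through the slice inequality $\|g(\cdot,\cdot,z)\|_{L^2(M)}^2\leq\frac{1}{h}\|g\|_{L^2(\Omega)}^2+2\|g\|_{L^2(\Omega)}\|\partial_zg\|_{L^2(\Omega)}$ applied to $g=\nabla_Hv$ is exactly what brings in $\|\partial_z\nabla_Hv\|_{L^2}$ and, in the $J_1$ estimate, the product $\|\partial_zv\|_{L^2}^2\|\partial_z\nabla_Hv\|_{L^2}^2$. A Young inequality then simultaneously absorbs $\frac{1}{R_T}\|\nabla_H\theta\|_{L^2}^2$ into the left-hand side, isolates the source $C\|\nabla_HT\|_{L^2}^2$, and collects the surviving velocity factors into $C\,K_3(t)\|\theta\|_{L^2}^2$ (every constituent of $K_3$ being finite-in-time by Propositions \ref{pro3.1} and \ref{pro3.3}). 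Grönwall's inequality, with source $C\int_0^{\mathfrak T}\|\nabla_HT\|_{L^2}^2\,dt$, then delivers the stated bound.

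I expect the only genuine obstacle to be the bookkeeping in the nonlinear estimates: arranging $J_1$ and $J_2$ so that the $z$-dependence of the velocity enters solely through $\|\partial_zv\|_{L^2}$ and $\|\partial_z\nabla_Hv\|_{L^2}$, while the temperature contributes at most a single horizontal derivative $\nabla_H\theta$ (to be absorbed) or $\nabla_HT$ (to be kept as a source). It is precisely this constraint — no $\partial_z^2v$ and no $\partial_z\theta$ — that dictates the otherwise unusual combination $\|\partial_zv\|_{L^2}^2\|\partial_z\nabla_Hv\|_{L^2}^2$ appearing in $K_3$. The differentiation step, the cancellation of transport, and the final Grönwall argument are all routine.
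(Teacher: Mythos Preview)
Your overall strategy---differentiate \eqref{21} in $z$ (equivalently, multiply by $-\partial_z^2T$ as the paper does), obtain the same two right-hand side terms $J_1,J_2$, absorb with the horizontal dissipation, and close by Gr\"onwall---is exactly the paper's approach, and your treatment of $J_2$ via the slice inequality on $g=\nabla_Hv$ is fine.

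There is, however, a concrete slip in your $J_1$ bookkeeping. After your slice-by-slice Ladyzhenskaya bound
\[
\int_M|\partial_zv||\nabla_HT||\theta|\,dxdy\leq C\|\nabla_HT\|_{L^2(M)}\|\partial_zv\|_{L^2(M)}^{1/2}\|(\partial_zv,\nabla_H\partial_zv)\|_{L^2(M)}^{1/2}\|\theta\|_{L^2(M)}^{1/2}\|(\theta,\nabla_H\theta)\|_{L^2(M)}^{1/2},
\]
you propose controlling the ``pointwise-in-$z$ velocity norms'' by the slice inequality applied to $g=\nabla_Hv$. But $\nabla_Hv$ does not appear in this expression; the velocity factor is $\partial_zv$, and applying your slice inequality to $g=\partial_zv$ would bring in $\|\partial_z^2v\|_{L^2}$, which is \emph{not} in $K_3$. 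Nor can you put the $\sup_z$ on $\theta$, since that would call $\partial_z\theta$. The factor you must take the $\sup$ over is $\|\nabla_HT\|_{L^2(M)}$: the slice inequality on $g=\nabla_HT$ produces $\|\nabla_H\theta\|_{L^2}$, which is absorbable. With $\sup_z\|\nabla_HT\|_{L^2(M)}$ pulled out, the remaining $z$-integral closes by H\"older with exponents $(4,4,4,4)$, and Young then yields exactly the product $\|\partial_zv\|_{L^2}^2\|\partial_z\nabla_Hv\|_{L^2}^2$ together with the source $C\|\nabla_HT\|_{L^2}^2$. Equivalently---and this is how the paper actually proceeds---one replaces $|\nabla_HT(x,y,z)|$ by $\frac{1}{h}\int_{-h}^0|\nabla_HT|\,d\xi+\int_{-h}^0|\nabla_H\theta|\,d\xi$ pointwise and then invokes Lemma~\ref{lemma1}. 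Either way, the vertical derivative must fall on $\nabla_HT$, not on the velocity.
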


\begin{proof}
Multiplying \eqref{21} with $-\partial_z^2 T$, it follows from integration by parts that
\begin{eqnarray*}
\frac{1}{2}\frac{d}{dt}\|\partial_z T\|_{L^2}^2+\frac{1}{R_T}\|\nabla_H\partial_z T\|_{L^2}^2+\frac{\alpha_T}{R_T}\|\partial_zT\|_{L^2(\Gamma_s)}^2+\varepsilon\|\partial_z^2T\|_{L^2}^2\\
=-\int_\Omega\partial_zv\cdot\nabla_H T\partial_z T~d\Omega+\int_\Omega\nabla_H\cdot v|\partial_z T|^2~d\Omega
:=I_1+I_2.
\end{eqnarray*}
Using $|f|\leq \int_{-h}^0|\partial_zf|dz+\frac{1}{h}\int_{-h}^0|f|dz$, applying Lemma \ref{lemma1}, and by the Young and Poincar\'e inequalities, one deduces
\begin{equation*}
\begin{aligned}
|I_1|+|I_2|&\leq C\int_\Omega|\partial_z v|\left[\int_{-h}^0\Big(|\nabla_HT|+|\nabla_H\partial_zT|\Big)dz\right]|\partial_zT|d\Omega\\
&\ \ \ \  +C\int_\Omega\left[\int_{-h}^0\Big(|\nabla_Hv|+|\nabla_H\partial_zv|\Big)dz\right]|\partial_zT|^2d\Omega\\
&\leq C\|(\nabla_HT,\nabla_H\partial_zT)\|_{L^2} \|\partial_zT\|_{L^2}^{\frac{1}{2}}\|(\partial_zT,\nabla_H\partial_zT)\|_{L^2}^{\frac{1}{2}}
\|\partial_zv\|_{L^2}^{\frac{1}{2}}\|\nabla_H\partial_zv\|_{L^2}^{\frac{1}{2}}\\
&\ \ \ \  +C\big(\|\nabla_Hv\|_{L^2}+\|\nabla_H\partial_zv\|_{L^2}\big)\|\partial_zT\|_{L^2}\big(\|\partial_zT\|_{L^2}+\|\nabla_H\partial_zT\|_{L^2}\big)\\
&\leq C\big(1+\|\nabla_Hv\|_{L^2}^2+\|\partial_z\nabla_Hv\|_{L^2}^2+\|\partial_zv\|_{L^2}^2\|\partial_z\nabla_Hv\|_{L^2}^2\big)\|\partial_zT\|_{L^2}^2\\
&\ \ \ \ \ +\frac{1}{8R_T}\|\nabla_H\partial_zT\|_{L^2}^2+C\|\nabla_HT\|_{L^2}^2.
\end{aligned}
\end{equation*}
Therefore, 
\begin{equation*}
\begin{aligned}
&\ \ \  \frac{d}{dt}\|\partial_z T\|_{L^2}^2+\frac{1}{R_T}\|\nabla_H\partial_z T\|_{L^2}^2+\frac{\alpha_T}{R_T}\|\partial_zT\|_{L^2(\Gamma_s)}^2+\varepsilon\|\partial_z^2T\|_{L^2}^2\\
&\leq C\big(1+\|(\nabla_Hv,\partial_z\nabla_Hv)\|_{L^2}^2+\|\partial_zv\|_{L^2}^2\|\partial_z\nabla_Hv\|_{L^2}^2\big)\|\partial_zT\|_{L^2}^2
+C\|\nabla_HT\|_{L^2}^2,\label{61}
\end{aligned}
\end{equation*}
from which, by the Gr\"onwall inequality, the conclusion follows.
\end{proof}

\subsection{Estimates of $(\nabla_Hv,\nabla_HT)$ and $(\partial_tv,\partial_tT)$ }

We first consider the estimates of $\nabla_Hv$ and $\partial_tv$.
\begin{proposition}\label{pro3.5}
For any positive time $\mathfrak{T}\in(0,t^*_\varepsilon)$, the following holds
\begin{align*}
&\ \ \  \sup_{0\leq t\leq\mathfrak{T}}\|\nabla v\|_{L^2}^2+\int_0^{\mathfrak{T}}\|\partial_t v\|_{L^2}^2dt\\
&\leq C\left[\|\nabla v_0\|_{L^2}^2+\int_0^{\mathfrak{T}}\Big(K_3(t)+\|\nabla_HT\|_{L^2}^2+\|v\|_{L^2}^2\Big)dt\right]\\
&\qquad\qquad\times e^{C\int_0^{\mathfrak{T}}\big(K_3(t)+\|\nabla_HT\|_{L^2}^2+\|v\|_{L^2}^2(1+\|\nabla_Hv\|_{L^2}^2)\big)dt} \label{42}
\end{align*}
for a positive constant $C$ depending only on $Re_1$ and $Re_2$, and
\begin{equation*}
\int_0^{\mathfrak{T}}\|\nabla^2 v\|_{L^2}^2dt\leq J_1(\mathfrak{T}),\label{43}
\end{equation*}
where $J_1(t)$, a continuous function in $[0,\infty)$, is determined by  $h, Re_1, Re_2, R_T$, $\|v_0\|_{H^1}$ and $\|T_0\|_{L^2}$.
\end{proposition}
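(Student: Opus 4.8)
The plan is to test the momentum equation \eqref{19} with $\partial_t v$, and to run an anisotropic $H^2$-estimate for $v$ in parallel, coupling the two so as to close the bound. First I would multiply \eqref{19} by $\partial_t v$ and integrate over $\Omega$. After integrating by parts and using the boundary conditions \eqref{73}--\eqref{75}, the two dissipation terms produce $\frac{1}{2}\frac{d}{dt}\big(\frac{1}{Re_1}\|\nabla_H v\|_{L^2}^2+\frac{1}{Re_2}\|\partial_z v\|_{L^2}^2\big)$, while the time-derivative term gives $\|\partial_t v\|_{L^2}^2$. The surface-pressure term $\int_\Omega\nabla_H p_s\cdot\partial_t v\,d\Omega$ vanishes: since $p_s=p_s(x,y,t)$ it reduces to $h\int_M\nabla_H p_s\cdot\partial_t\overline v\,dxdy$, which is zero after integrating by parts because $\nabla_H\cdot\overline v=0$ and $\overline v=0$ on $\partial M$. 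The Coriolis term and the buoyancy term $\nabla_H\int_{-h}^z T\,d\xi$ are linear; by Young's inequality they are absorbed, contributing $C\|v\|_{L^2}^2$ and $C\|\nabla_H T\|_{L^2}^2$ to the right-hand side together with a small multiple of $\|\partial_t v\|_{L^2}^2$, matching the $\|v\|_{L^2}^2$ and $\|\nabla_H T\|_{L^2}^2$ in the final bound.

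The heart of the matter is the convection pair $\int_\Omega(v\cdot\nabla_H)v\cdot\partial_t v\,d\Omega$ and $\int_\Omega\big(\int_{-h}^z\nabla_H\cdot v\,d\xi\big)\partial_z v\cdot\partial_t v\,d\Omega$. After peeling off $\tfrac14\|\partial_t v\|_{L^2}^2$ with Young's inequality, these reduce to the $L^2$-norms $\|(v\cdot\nabla_H)v\|_{L^2}^2$ and $\big\|\big(\int_{-h}^z\nabla_H\cdot v\,d\xi\big)\partial_z v\big\|_{L^2}^2$, which I would estimate using the anisotropic Ladyzhenskaya inequality of Lemma \ref{lemma1}, the Gagliardo--Nirenberg and Poincaré inequalities, and the a priori bounds already secured: the $L^\infty_tL^{3+\delta}$ bound on $\widetilde v$ from Proposition \ref{pro3.2}, the $L^\infty_tL^2$ bound on $\partial_z v$ and the $L^2_t$ bounds on $\nabla_H\partial_z v,\partial_z^2 v$ from Proposition \ref{pro3.3}, and the basic energy bounds from Proposition \ref{pro3.1}. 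The point is to arrange that every genuinely second-order quantity $\|\nabla^2 v\|_{L^2}$ enters only with an arbitrarily small coefficient, while the remaining factors are organized into $C\big(K_3(t)+\|v\|_{L^2}^2(1+\|\nabla_H v\|_{L^2}^2)\big)\|\nabla v\|_{L^2}^2$.

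To control those second-order contributions I would invoke an anisotropic $H^2$-estimate. Reading \eqref{19} as $-\frac{1}{Re_1}\Delta_H v-\frac{1}{Re_2}\partial_z^2 v=g$ with $g=-\partial_t v-(v\cdot\nabla_H)v+\big(\int_{-h}^z\nabla_H\cdot v\,d\xi\big)\partial_z v-\nabla_H p_s+\nabla_H\int_{-h}^z T\,d\xi-fk\times v$, elliptic regularity for this uniformly elliptic constant-coefficient operator with the mixed conditions \eqref{73}--\eqref{75} (after an even extension in $z$ turning the Neumann conditions on $\Gamma_u,\Gamma_b$ into an interior problem) gives $\|\nabla^2 v\|_{L^2}^2\le C(\|g\|_{L^2}^2+\|v\|_{L^2}^2)$. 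I would handle the pressure by the splitting $v=\overline v+\widetilde v$: the baroclinic equation \eqref{34} carries no pressure, so $\|\nabla^2\widetilde v\|_{L^2}$ follows directly, whereas the barotropic equation \eqref{32} is a two-dimensional Stokes system on $M$ for which Stokes regularity bounds $\|\nabla_H^2\overline v\|_{L^2(M)}+\|\nabla_H p_s\|_{L^2(M)}$ by $\|\partial_t\overline v\|_{L^2(M)}$ plus the $L^2$-norms of the remaining, already estimated, terms. Since the nonlinear $L^2$-norms in $\|g\|_{L^2}^2$ carry only small multiples of $\|\nabla^2 v\|_{L^2}^2$, I absorb them to obtain $\|\nabla^2 v\|_{L^2}^2\le C\|\partial_t v\|_{L^2}^2+C(\text{lower order})$; substituting this back into the convection estimates of the previous paragraph and reabsorbing the small multiples of $\|\partial_t v\|_{L^2}^2$ into the left-hand side yields a differential inequality $\frac{d}{dt}\big(\frac{1}{Re_1}\|\nabla_H v\|_{L^2}^2+\frac{1}{Re_2}\|\partial_z v\|_{L^2}^2\big)+\tfrac12\|\partial_t v\|_{L^2}^2\le Ca(t)\|\nabla v\|_{L^2}^2+Cb(t)$ with $a(t)=K_3(t)+\|\nabla_H T\|_{L^2}^2+\|v\|_{L^2}^2(1+\|\nabla_H v\|_{L^2}^2)$ and $b(t)=K_3(t)+\|\nabla_H T\|_{L^2}^2+\|v\|_{L^2}^2$.

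The Grönwall inequality applied to this differential inequality yields the asserted bound on $\sup_{[0,\mathfrak T]}\|\nabla v\|_{L^2}^2+\int_0^{\mathfrak T}\|\partial_t v\|_{L^2}^2\,dt$; here $\int_0^{\mathfrak T}a\,dt<\infty$ is guaranteed because Propositions \ref{pro3.1} and \ref{pro3.3} make $K_3$, $\|\nabla_H T\|_{L^2}^2$ and $\|v\|_{L^2}^2(1+\|\nabla_H v\|_{L^2}^2)$ time-integrable. Finally, integrating $\|\nabla^2 v\|_{L^2}^2\le C\|\partial_t v\|_{L^2}^2+C(\text{lower order})$ over $[0,\mathfrak T]$ and inserting the bounds just obtained, all of which are controlled by $h,Re_1,Re_2,R_T,\|v_0\|_{H^1},\|T_0\|_{L^2}$ through Propositions \ref{pro3.1}--\ref{pro3.4}, produces the continuous function $J_1(\mathfrak T)$ dominating $\int_0^{\mathfrak T}\|\nabla^2 v\|_{L^2}^2\,dt$. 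I expect the main obstacle to be the convection estimates of the second paragraph: forcing the quadratic terms into exactly the form $a(t)\|\nabla v\|_{L^2}^2$ with time-integrable $a$ while confining all $\|\nabla^2 v\|_{L^2}$ contributions to absorbable small terms is delicate, and it is precisely where the anisotropic Lemma \ref{lemma1} and the sharp $L^{3+\delta}$ bound on $\widetilde v$ of Proposition \ref{pro3.2} are indispensable.
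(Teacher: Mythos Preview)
Your plan is correct and structurally the same as the paper's: test \eqref{19} with $\partial_t v$, couple with an $H^2$ elliptic estimate to control $\|\nabla^2 v\|_{L^2}$ in terms of $\|\partial_t v\|_{L^2}$ plus lower-order nonlinear norms, absorb, and Gr\"onwall. Two tactical differences are worth flagging.

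First, for the elliptic step the paper does not split $v=\overline v+\widetilde v$; it invokes directly the regularity estimate for the \emph{hydrostatic Stokes} system (Ziane \cite{Zia}), which gives $\|\nabla^2 v\|_{L^2}\le C(\|R\|_{L^2}+\|\partial_t v\|_{L^2})$ with $R$ the full right-hand side of \eqref{19}, pressure already absorbed. Your route through \eqref{32}--\eqref{34} would also work, but the black-box citation is cleaner.

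Second, and more substantively, your claim that the $L^{3+\delta}$ bound on $\widetilde v$ (Proposition~\ref{pro3.2}) is ``indispensable'' here is not right: the paper's proof of Proposition~\ref{pro3.5} never uses it. Instead, the nonlinear $L^2$-norms are handled by the pointwise inequality $|v(\cdot,z)|\le \frac{1}{h}\int_{-h}^0|v|\,d\xi+\int_{-h}^0|\partial_z v|\,d\xi$, which replaces $v$ by vertical integrals and leads, via H\"older--Minkowski--Ladyzhenskaya in $M$, to products such as
\[
\Big\|\int_{-h}^0|\partial_z v|\,dz\,|\nabla_H v|\Big\|_{L^2}
\le C\big(\|\nabla_H v\|_{L^2}\|\nabla_H^2 v\|_{L^2}\|\partial_z v\|_{L^2}\|\nabla_H\partial_z v\|_{L^2}\big)^{1/2}.
\]
After Young's inequality this puts exactly $\|\partial_z v\|_{L^2}^2\|\nabla_H\partial_z v\|_{L^2}^2$ into the coefficient, which is the dominant term of $K_3$. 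That is why $K_3$ from Proposition~\ref{pro3.4}, not $\|\widetilde v\|_{L^{3+\delta}}$, appears in the statement. Proposition~\ref{pro3.2} is used only upstream, inside the proof of Proposition~\ref{pro3.3}.
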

\begin{proof}
Rewrite \eqref{19} as
\begin{equation}
\begin{aligned}
\partial_t v-\frac{1}{Re_1}\Delta_H v-\frac{1}{Re_2}\partial_z^2 v+\nabla_H p_s=R,\\
v=0~~\textrm{on}~\Gamma_s,~~\partial_zv=0~~\textrm{on}~\Gamma_b~\textrm{and}~\Gamma_u,\label{39}
\end{aligned}
\end{equation}
where
\begin{equation*}
R:=-(v\cdot\nabla_H) v+\left(\int_{-h}^z\nabla_H\cdot vd\xi\right)\partial_z v+\nabla_H\int_{-h}^zTd\xi-fk\times v.
\end{equation*}
It follows by the elliptic estimate of hydrostatic Stokes equations (see \cite{Zia}) that
\begin{equation}
\begin{aligned}
\|\nabla^2 v\|_{L^2}\leq C(\|R\|_{L^2}+\|\partial_t v\|_{L^2}).\label{40}
\end{aligned}
\end{equation}
To estimate $R$,  one can see that
\begin{align*}
&\left\|\big(v\cdot\nabla_H\big)v-\int_{-h}^z\nabla_H\cdot vd\xi\partial_zv\right\|_{L^2}\\
\leq& C\left\|\int_{-h}^0|v|dz\nabla_Hv\right\|_{L^2}+\left\|\int_{-h}^0|\partial_zv|dz\nabla_Hv\right\|_{L^2}
+\left\|\int_{-h}^0|\nabla_Hv|dz\partial_zv\right\|_{L^2}.
\end{align*}
By the H\"older, Minkowski, and Ladyzhenskaya inequalities, one has
\begin{align*}
&\left\|\int_{-h}^0|\partial_zv|dz\nabla_Hv\right\|_{L^2}\\
=&\left[\int_M\left(\int_{-h}^0|\partial_zv|dz\right)^2\int_{-h}^0|\nabla_Hv|^2dz~dxdy\right]^{\frac{1}{2}}\\
\leq &\left[\int_M\left(\int_{-h}^0|\partial_zv|dz\right)^4dxdy\right]^{\frac{1}{4}}\left[\int_M\left(\int_{-h}^0|\nabla_Hv|^2dz\right)^2dxdy\right]^{\frac{1}{4}}\\
\leq &\int_{-h}^0\|\partial_zv\|_{L^4(M)}dz\left(\int_{-h}^0\|\nabla_Hv\|_{L^4(M)}^2dz\right)^{\frac{1}{2}}\\
\leq&C\left(\int_{-h}^0\|\nabla_Hv\|_{L^2(M)}\|\nabla_Hv\|_{H^1(M)}dz\right)^{\frac{1}{2}}\left(\int_{-h}^0\|\partial_zv\|_{L^2(M)}^{\frac{1}{2}}\|\nabla_H\partial_zv\|_{L^2(M)}^{\frac{1}{2}}dz\right)\\
\leq& C\Big(\|\nabla_H v\|_{L^2}\|\nabla_H^2v\|_{L^2}\|\partial_zv\|_{L^2}\|\nabla_H\partial_zv\|_{L^2}\Big)^{\frac{1}{2}}.
\end{align*}
Similarly
\begin{equation*}
\left\|\int_{-h}^0|v|dz\nabla_Hv\right\|_{L^2}\leq C\Big(\|v\|_{L^2}\|\nabla_Hv\|_{L^2}^2\|\nabla_H^2v\|_{L^2}\Big)^{\frac{1}{2}},
\end{equation*}
and
\begin{equation*}
\left\|\int_{-h}^0|\nabla_Hv|dz\partial_zv\right\|_{L^2}\leq C\Big(\|\nabla_H v\|_{L^2}\|\nabla_H^2v\|_{L^2}\|\partial_zv\|_{L^2}\|\nabla_H\partial_zv\|_{L^2}\Big)^{\frac{1}{2}}.
\end{equation*}

Therefore,
\begin{equation*}
\begin{aligned}
\|R\|_{L^2}&\leq C\|\nabla_Hv\|_{L^2}\|\nabla_H^2v\|_{L^2}^{\frac{1}{2}}\|v\|_{L^2}^{\frac{1}{2}}\\
&\ \ \ \  +C\|\nabla_Hv\|_{L^2}^{\frac{1}{2}}\|\nabla_H^2v\|_{L^2}^{\frac{1}{2}}\|\partial_zv\|_{L^2}^{\frac{1}{2}}\|\nabla_H\partial_zv\|_{L^2}^{\frac{1}{2}}\\
&\ \ \ \  +C\|\nabla_HT\|_{L^2}+C\|v\|_{L^2}.
\end{aligned}
\end{equation*}
Then, by \eqref{40} and the Young inequality, one can deduce
\begin{equation}
\begin{aligned}
\|\nabla^2 v\|_{L^2}\leq C&\|\nabla_H v\|_{L^2}^2\|v\|_{L^2}+C\|\nabla_Hv\|_{L^2}\|\partial_zv\|_{L^2}\|\nabla_H\partial_zv\|_{L^2}\\
&+C\|\nabla_HT\|_{L^2}+C\|v\|_{L^2}+C\|\partial_tv\|_{L^2}.\label{41}
\end{aligned}
\end{equation}
Multiplying \eqref{39} with $\partial_tv$, integrating over $\Omega$, and recalling that $\int_{-h}^0\nabla_H\cdot vdz=0$,
it follows from integrating by parts and \eqref{41} that
\begin{equation}
\begin{aligned}
& \frac{d}{dt}\Big(\frac{1}{Re_1}\|\nabla_H v\|_{L^2}^2+\frac{1}{Re_2}\|\partial_zv\|_{L^2}^2\Big)+\|\partial_t v\|_{L^2}^2\leq C\|R\|_{L^2}^2\\
\leq &C\|\nabla_Hv\|_{L^2}^2\|\nabla_H^2v\|_{L^2}\|v\|_{L^2} +C\|\nabla_Hv\|_{L^2}\|\nabla_H^2v\|_{L^2}\|\partial_zv\|_{L^2}\|\nabla_H\partial_zv\|_{L^2} \\ &+C\|\nabla_HT\|_{L^2}^2+C\|v\|_{L^2}^2\\
\leq& C\big(\|\nabla_Hv\|_{L^2}^2\|v\|_{L^2}^2+\|\nabla_H\partial_zv\|_{L^2}^2\|\partial_zv\|_{L^2}^2+\|\nabla_HT\|_{L^2}^2\\
& +\|v\|_{L^2}^2+1\big)\big(\|\nabla_Hv\|_{L^2}^2+1\big)+\frac{1}{2}\|\partial_tv\|_{L^2}^2.\label{52}
\end{aligned}
\end{equation}
Therefore,
\begin{equation*}
\begin{aligned}
&\ \ \ \frac{d}{dt}\Big(\frac{2}{Re_1}\|\nabla_H v\|_{L^2}^2+\frac{2}{Re_2}\|\partial_zv\|_{L^2}^2\Big)+\|\partial_t v\|_{L^2}^2\\
&\leq C\big[K_3+\|\nabla_HT\|_{L^2}^2+\|v\|_{L^2}^2(1+\|\nabla_Hv\|_{L^2}^2)\big](\|\nabla_Hv\|_{L^2}^2+1),
\end{aligned}
\end{equation*}
from which by the Gr\"onwall inequality, the first conclusion follows. The second conclusion follows from the first one
by using \eqref{41} and Propositions \ref{pro3.1} and \ref{pro3.3}.
\end{proof}

Next we give the estimates of $\nabla_HT$ and $\partial_tT$ in the following proposition.
\begin{proposition}\label{pro3.6}
For any positive time $\mathfrak{T}\in(0,t^*_\varepsilon)$, the following holds
\begin{align*}
&\sup_{0\leq t\leq \mathfrak{T}}\left(\|\nabla_H T\|_{L^2}^2+\alpha_T\|T\|_{L^2(\Gamma_s)}^2\right) +\frac{1}{R_T}\int_0^{\mathfrak{T}}\|\nabla_H^2T\|_{L^2}^2 dt\\ \leq& Ce^{C\int_0^{\mathfrak{T}}K_4(t)dt}\left(\|\nabla_HT_0\|_{L^2}^2+\|T_0\|_{L^2}^2+\int_0^{\mathfrak{T}}K_5(t)dt
+J_1(\mathfrak{T})\right) \label{44}
\end{align*}
for a positive constant $C$ depending only on $h, Re_1, Re_2,$ and $R_T$, where
\begin{align*}
K_4(t):=&1+\|(v,\partial_zv)\|_{L^2}^2\|(\nabla_Hv,\nabla_H\partial_zv)\|_{L^2}^2,\\
K_5(t):=&\|\nabla_Hv\|_{L^2}^2\left(\|\partial_zT\|_{L^2}^4+\|\partial_zT\|_{L^2}^2\|\nabla_H\partial_zT\|_{L^2}^2\right),
\end{align*}
and $J_1(t)$ is the function in Proposition \ref{pro3.5}. We also have
$$
\int_0^{\mathfrak{T}}\|\partial_tT\|_{L^2}^2dt\leq J_2(\mathfrak{T}),\label{45}
$$
where $J_2(t)$, a continuous function in $[0,\infty)$, is determined by $h, Re_1, Re_2,$ and $R_T$, and $\|(v_0,T_0)\|_{H^1}$.
\end{proposition}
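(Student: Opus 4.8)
The plan is to test the temperature equation \eqref{21} with $-\Delta_H T$ and integrate over $\Omega$. Integrating by parts in the horizontal variables, the time-derivative term becomes $\tfrac12\frac{d}{dt}\big(\|\nabla_H T\|_{L^2}^2+\alpha_T\|T\|_{L^2(\Gamma_s)}^2\big)$, the boundary contribution on $\Gamma_s$ being rewritten through the Robin condition $\partial_n T=-\alpha_T T$ in \eqref{23}; the horizontal diffusion produces the good term $\tfrac1{R_T}\|\Delta_H T\|_{L^2}^2$; and the regularizing term, after integrating by parts in $z$ and using $\partial_z T=0$ on $\Gamma_u\cup\Gamma_b$ together with the $z$-differentiated Robin condition, equals $\varepsilon\|\nabla_H\partial_z T\|_{L^2}^2+\varepsilon\alpha_T\|\partial_z T\|_{L^2(\Gamma_s)}^2\ge0$ and is discarded. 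Because $M$ is smooth and the lateral condition is of Robin type, slicewise two-dimensional elliptic regularity gives $\|\nabla_H^2T\|_{L^2}\le C(\|\Delta_H T\|_{L^2}+\|T\|_{L^2})$, so that the diffusion term dominates $\tfrac1{R_T}\|\nabla_H^2 T\|_{L^2}^2$ up to the bounded quantity $\|T\|_{L^2}^2$ from Proposition \ref{pro3.1}.

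The heart of the proof is the two advection terms. Since the full velocity $u:=(v,w)$, with $w$ given by \eqref{14}, is divergence free, they combine into $\int_\Omega(u\cdot\nabla T)\Delta_H T\,d\Omega$, and I would integrate by parts in the horizontal variables: the piece transporting $\tfrac12|\nabla_H T|^2$ cancels, its volume part by incompressibility and its boundary part because $v=0$ on $\Gamma_s$ and $w=0$ on $\Gamma_u\cup\Gamma_b$, leaving $-\int_\Omega(\partial_j u_k)(\partial_k T)(\partial_j T)\,d\Omega$ (summing $j$ over the horizontal and $k$ over all directions), plus a lateral boundary term proportional to $\alpha_T$ coming from the vertical advection, which is controlled by a trace estimate against the boundary norm $\|T\|_{L^2(\Gamma_s)}$ already present on the left. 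The volume remainder splits into a contribution bounded by $\int_\Omega|\nabla_H v||\nabla_H T|^2$ and one bounded by $\int_\Omega|\nabla_H w||\partial_z T||\nabla_H T|$. Each is treated with the anisotropic inequality of Lemma \ref{lemma1} together with the pointwise bound $|f|\le\int_{-h}^0|\partial_z f|\,dz+\tfrac1h\int_{-h}^0|f|\,dz$, which turns a $z$-dependent factor into a $z$-independent one; the factors are arranged so as to expose only the controlled quantities $\partial_z v,\nabla_H\partial_z v$ (Proposition \ref{pro3.3}), $\partial_z T,\nabla_H\partial_z T$ (Proposition \ref{pro3.4}) and $\nabla v,\nabla^2 v$ (Proposition \ref{pro3.5}). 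After using Young's inequality to absorb every power of $\|\nabla_H^2 T\|_{L^2}$ into the diffusion term, the surviving factors reorganize into the Gr\"onwall multiplier $K_4$ acting on $\|\nabla_H T\|_{L^2}^2+\alpha_T\|T\|_{L^2(\Gamma_s)}^2$, the forcing $K_5$, and terms of the form $\|\nabla^2 v\|_{L^2}^2$ times the bounded quantity $\|\partial_z T\|_{L^2}^2$, whose time integral is controlled by $J_1(\mathfrak T)$ through Proposition \ref{pro3.5} and the $L^\infty_tL^2$ bound on $\partial_z T$.

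Collecting everything gives a differential inequality of the shape
\begin{align*}
&\frac{d}{dt}\Big(\|\nabla_H T\|_{L^2}^2+\alpha_T\|T\|_{L^2(\Gamma_s)}^2\Big)+\frac1{R_T}\|\nabla_H^2 T\|_{L^2}^2\\
&\qquad\le C K_4\Big(\|\nabla_H T\|_{L^2}^2+\alpha_T\|T\|_{L^2(\Gamma_s)}^2\Big)+C K_5+C\|\nabla^2 v\|_{L^2}^2\|\partial_z T\|_{L^2}^2,
\end{align*}
and Gr\"onwall's inequality yields the first asserted bound, with $J_1(\mathfrak T)$ absorbing the time integral of the velocity Hessian. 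For the last statement I would read $\partial_t T=\tfrac1{R_T}\Delta_H T+\varepsilon\partial_z^2 T-v\cdot\nabla_H T+w\,\partial_z T$ off \eqref{21} and estimate its $L^2(\Omega\times(0,\mathfrak T))$ norm term by term: $\int_0^{\mathfrak T}\|\Delta_H T\|_{L^2}^2\,dt$ is bounded by the estimate just established, $\int_0^{\mathfrak T}\varepsilon^2\|\partial_z^2 T\|_{L^2}^2\,dt\le\varepsilon\int_0^{\mathfrak T}\varepsilon\|\partial_z^2 T\|_{L^2}^2\,dt$ is bounded since $\varepsilon<1$ and the latter integral is a good term in the energy identity behind Proposition \ref{pro3.4}, and the two advection terms are dominated by the anisotropic estimates using $v\in L^\infty_tH^1\cap L^2_tH^2$ and the temperature bounds, the outcome being the continuous function $J_2(\mathfrak T)$. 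The main obstacle is exactly the vertical advection $w\,\partial_z T$ paired with $\Delta_H T$: estimating it naively forces either a third-order horizontal derivative of $T$ or a second-order derivative of $v$ into a position that cannot be absorbed, and it is the three-dimensional incompressibility cancellation, combined with Lemma \ref{lemma1} and the vertical-averaging inequality, that simultaneously keeps the top-order temperature term absorbable and routes the velocity Hessian harmlessly into $J_1$.
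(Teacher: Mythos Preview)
Your treatment of the linear terms (time derivative, horizontal diffusion, vertical regularization) matches the paper, but your handling of the convection terms takes a different---and more delicate---route than the paper's, and the boundary term you generate is not under control the way you claim.

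The paper does \emph{not} integrate by parts in the convection terms. It estimates $(v\cdot\nabla_H T,\Delta_H T)_{L^2}$ and $\big(\int_{-h}^z\nabla_H\cdot v\,d\xi\,\partial_z T,\Delta_H T\big)_{L^2}$ directly: for the first it applies the pointwise inequality $|v|\le\int_{-h}^0|\partial_z v|\,dz+\tfrac1h\int_{-h}^0|v|\,dz$ to $v$ (not to any temperature factor) and then Lemma~\ref{lemma1} with the triple $(\phi,\varphi,\psi)=(|v|+|\partial_z v|,\nabla_H T,\Delta_H T)$; for the second it uses $|w|\le\int_{-h}^0|\nabla_H v|\,dz$ and Lemma~\ref{lemma1} with $(\phi,\varphi,\psi)=(|\nabla_H v|,\partial_z T,\Delta_H T)$. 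After Young's inequality this yields precisely the differential inequality
\[
\frac{d}{dt}\big(\|\nabla_H T\|_{L^2}^2+\alpha_T\|T\|_{L^2(\Gamma_s)}^2\big)+\tfrac1{R_T}\|\Delta_H T\|_{L^2}^2\le CK_4\|\nabla_H T\|_{L^2}^2+CK_5+C\|\Delta_H v\|_{L^2}^2,
\]
with exactly the $K_4$, $K_5$ of the statement and a forcing $\|\Delta_H v\|_{L^2}^2$ whose time integral is $\le J_1(\mathfrak T)$.

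Your integration-by-parts route produces the lateral boundary term $-\alpha_T\int_{\Gamma_s} w\,\partial_z T\,T\,dS$. This is genuinely nonzero: $v=0$ on $\Gamma_s$ forces only the tangential derivatives of $v$ to vanish there, so $\nabla_H\cdot v|_{\Gamma_s}=\partial_n v_n$ and hence $w|_{\Gamma_s}\not\equiv0$. Controlling this term requires trace estimates for $w$ (hence $\nabla^2 v$) and for $\partial_z T$ on $\Gamma_s$, not merely $\|T\|_{L^2(\Gamma_s)}$; once you do this (e.g.\ via $\int_{\Gamma_s}w\,\partial_z T\,T=-\tfrac12\int_{\Gamma_s}(\nabla_H\cdot v)T^2$ and $H^{1/2}$ trace bounds) the resulting Gr\"onwall multiplier picks up terms such as $\|v\|_{H^2}$ that are not part of $K_4$. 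Likewise, the volume remainder $\int_\Omega|\nabla_H w||\partial_z T||\nabla_H T|$ carries $\|\nabla_H^2 v\|_{L^2}$ at full power and, after absorbing $\|\nabla_H^2 T\|_{L^2}$, does not reorganize into $K_4$, $K_5$ and $\|\nabla^2 v\|_{L^2}^2\|\partial_z T\|_{L^2}^2$ as you assert. Your scheme can be made to close, but with different multipliers and forcings than those stated; the paper's direct estimate avoids the boundary term entirely and delivers the announced $K_4$, $K_5$.

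For $\partial_t T$ the paper again differs: it tests \eqref{21} with $\partial_t T$, obtaining an energy identity whose good term is $\|\partial_t T\|_{L^2}^2$ and whose right-hand side is estimated by the same vertical-averaging/Lemma~\ref{lemma1} mechanism. Your plan of reading $\partial_t T$ off the equation and bounding each piece in $L^2(\Omega\times(0,\mathfrak T))$ is a legitimate alternative and works, since $\|v\cdot\nabla_H T\|_{L^2}^2$ and $\|w\,\partial_z T\|_{L^2}^2$ can be controlled by Minkowski--Ladyzhenskaya as in the proof of Proposition~\ref{pro3.5}, and $\varepsilon^2\|\partial_z^2 T\|_{L^2}^2\le\varepsilon\cdot\varepsilon\|\partial_z^2 T\|_{L^2}^2$ is dominated by the dissipation in Proposition~\ref{pro3.4}.
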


\begin{proof}
Multiplying \eqref{21} with $-\Delta_HT$ and integrating over $\Omega$, it follows from integrating by parts, the Poincar\'e and Young inequalities, and Lemma \ref{lemma1} that
\begin{align*}
&\frac{1}{2}\frac{d}{dt}\Big(\|\nabla_HT\|_{L^2}^2+\alpha_T\|T\|_{L^2(\Gamma_s)}^2\Big)\\
&\qquad\qquad+\varepsilon\left(\|\partial_z\nabla_HT\|_{L^2}^2+\alpha_T\|\partial_zT\|_{L^2(\Gamma_s)}^2\right)+\frac{1}{R_T}\|\Delta_HT\|_{L^2}^2\\
=&\left(v\cdot\nabla_HT,\Delta_HT\right)_{L^2}-\left(\int_{-h}^z\nabla_H\cdot vd\xi\partial_zT,\Delta_HT\right)_{L^2}\\
\leq&\int_M\left(\frac{1}{h}\int_{-h}^0|v|dz+\int_{-h}^0|\partial_zv|dz\right)\left(\int_{-h}^0|\nabla_HT||\Delta_HT|dz\right)dxdy\\
&\qquad+\int_M\left(\int_{-h}^0|\nabla_Hv|dz\right)\left(\int_{-h}^0|\partial_zT||\Delta_HT|dz\right)dxdy\\
\leq&C\|\Delta_HT\|_{L^2}\|\nabla_HT\|_{L^2}^{\frac{1}{2}}\|(\nabla_HT,\nabla_H^2T)\|_{L^2}^{\frac{1}{2}} \|(v,\partial_zv)\|_{L^2}^{\frac{1}{2}}\|(\nabla_Hv,\nabla_H\partial_zv)\|_{L^2}^{\frac{1}{2}}\\
&+C\|\Delta_HT\|_{L^2}\|\partial_zT\|_{L^2}^{\frac{1}{2}}\|(\partial_zT,\nabla_H\partial_zT)\|_{L^2}^{\frac{1}{2}}
\|\nabla_Hv\|_{L^2}^{\frac{1}{2}}\|(\nabla_Hv,\nabla_H^2v)\|_{L^2}^{\frac{1}{2}}\\
\leq&C\big(\|(v,\partial_zv)\|_{L^2}^2\|(\nabla_Hv,\nabla_H\partial_zv)\|_{L^2}^2+1\big)\|\nabla_HT\|_{L^2}^2\\
& +C\|\nabla_Hv\|_{L^2}^2\big(\|\partial_zT\|_{L^2}^4+\|\partial_zT\|_{L^2}^2+\|\nabla_H\partial_zT\|_{L^2}^2\big)\\
& +C\|\Delta_Hv\|_{L^2}^2+\frac{1}{2R_T}\|\Delta_HT\|_{L^2}^2.
\end{align*}
Thus, one has
\begin{equation}
\begin{aligned}
\frac{d}{dt}\Big(\|\nabla_HT\|_{L^2}^2+\alpha_T\|T\|_{L^2(\Gamma_s)}^2\Big) +\frac{1}{R_T}\|\Delta_HT\|_{L^2}^2&\\
\leq CK_4(t)\|\nabla_HT\|_{L^2}^2+CK_5(t)+C\|\Delta_Hv\|_{L^2}^2,&\label{53}
\end{aligned}
\end{equation}
which implies the fist conclusion by using the Gr\"onwall inequality, Proposition \ref{pro3.5}, and the elliptic estimates.

It remains to estimate $\partial_tT$. Multiplying equation \eqref{21} with $\partial_tT$ and integrating over $\Omega$, it follows from integrating by parts that
\begin{equation*}
\begin{aligned}
\|\partial_tT\|_{L^2}^2+\frac{1}{2}\frac{d}{dt}\left(\frac{1}{R_T}\|\nabla_HT\|_{L^2}^2+\frac{\alpha_T}{R_T}
\|T\|_{L^2(\Gamma_s)}^2+\varepsilon\|\partial_zT\|_{L^2}^2\right)&\\
=-(v\cdot\nabla_HT,\partial_tT)_{L^2}+\left(\int_{-h}^z\nabla_H\cdot vd\xi\partial_zT,\partial_tT\right)_{L^2}.&
\end{aligned}
\end{equation*}
By Lemma \ref{lemma1}, the Young and Poincar\'e inequalities, one can obtain
\begin{align}
&\|\partial_tT\|_{L^2}^2+\frac{1}{2}\frac{d}{dt}\left(\frac{1}{R_T}\|\nabla_HT\|_{L^2}^2+\frac{\alpha_T}{R_T}
\|T\|_{L^2(\Gamma_s)}^2+\varepsilon\|\partial_zT\|_{L^2}^2\right)\nonumber\\
\leq&\int_M\left(\frac{1}{h}\int_{-h}^0|v|dz+\int_{-h}^0|\partial_zv|dz\right)
\left(\int_{-h}^0|\nabla_HT||\partial_tT|dz\right)dxdy\nonumber\\
& +\int_M\left(\int_{-h}^0|\nabla_Hv|dz\right)\left(\int_{-h}^0|\partial_zT||\partial_tT|dz\right)dxdy\nonumber\\
\leq&C\|\partial_tT\|_{L^2}\|(v,\partial_zv)\|_{L^2}^{\frac{1}{2}}\|(\nabla_Hv,\nabla_H\partial_zv)\|_{L^2}^{\frac{1}{2}}
\|\nabla_HT\|_{L^2}^{\frac{1}{2}}\|(\nabla_HT,\nabla_H^2T)\|_{L^2}^{\frac{1}{2}}\big)\label{54}\\
&+C\|\partial_tT\|_{L^2}\|\nabla_Hv\|_{L^2}^{\frac{1}{2}}\|\nabla_H^2v\|_{L^2}^{\frac{1}{2}}
\|\partial_zT\|_{L^2}^{\frac{1}{2}}\|(\partial_zT,\nabla_H\partial_zT)\|_{L^2}^{\frac{1}{2}} \nonumber\\
\leq&\frac{1}{2}\|\partial_tT\|_{L^2}^2+C\left(\|(v,\partial_zv)\|_{L^2}^2\|
(\nabla_Hv,\nabla_H\partial_zv)\|_{L^2}^2+1\right)\|\nabla_HT\|_{L^2}^2\nonumber\\
&+C\|\partial_zT\|_{L^2}^2\|(\partial_zT,\nabla_H\partial_zT)\|_{L^2}^2\|\nabla_Hv\|_{L^2}^2
+C\big(\|\nabla_H^2T\|_{L^2}^2+\|\nabla_H^2v\|_{L^2}^2\big),\nonumber\\
\leq&\frac{1}{2}\|\partial_tT\|_{L^2}^2+CK_4(t)\|\nabla_HT\|_{L^2}^2+CK_5(t)
+C\big(\|\nabla_H^2T\|_{L^2}^2+\|\nabla_H^2v\|_{L^2}^2\big).\nonumber
\end{align}
Then, by the Gr\"onwall inequality, it follows 
\begin{equation*}
\begin{aligned}
\int_0^{\mathfrak{T}}\|\partial_tT\|_{L^2}^2dt+\sup_{0\leq t\leq \mathfrak{T}} \Big(\frac{1}{R_T}\|\nabla_HT\|_{L^2}^2+\frac{\alpha_T}{R_T}\|T\|_{L^2(\Gamma_s)}^2+\varepsilon\|\partial_zT\|_{L^2}^2\Big)&\\
\leq Ce^{C\int_0^{\mathfrak{T}}K_4(t)dt}\Big[\|T_0\|_{H^1}^2+\int_0^{\mathfrak{T}}\Big(K_5(t)+\|\Delta_HT\|_{L^2}^2\Big)dt
+J_1(\mathfrak{T})\Big]&,
\end{aligned}
\end{equation*}
from which, by the first conclusion, the second conclusion follows.
\end{proof}

\subsection{Existence of the global $H^1$ strong solution}

Base on the estimates in the previous three subsections, we have the following global existence theorem for $H^1$ initial data.
\begin{theorem}\label{thm3.1}
Let $(v_0,T_0)\in \mathcal V\times H^1(\Omega)$. Then, for any $\varepsilon>0$, there is a unique global strong solution $(v,T)$ to system \eqref{19}--\eqref{21}, subject to the boundary and initial conditions \eqref{22}--\eqref{24}, satisfying the following estimate
\begin{equation*}
\sup_{0\leq t\leq \mathfrak{T}}\|(v,T)\|_{H^1}^2+\int_0^{\mathfrak{T}}\Big(\|v\|_{H^2}^2
+\|(\Delta_HT,\nabla_H\partial_zT,\partial_tv,\partial_tT)\|_{L^2}^2\Big)dt\leq J_3(\mathfrak{T}),
\end{equation*}
where $J_3(t)$, a continuous function, is determined by $h, Re_1, Re_2, R_T,$ and $\|(v_0,T_0)\|_{H^1}$.
\end{theorem}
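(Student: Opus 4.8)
The plan is to promote the \emph{local} solution given by Proposition~\ref{pro2.1} to a \emph{global} one through a continuation argument whose engine is an a priori bound ruling out blow-up of the $H^1$ norm in finite time. Fix $\varepsilon\in(0,1)$ and let $(v,T)$ be the unique solution on the maximal interval $[0,t^*_\varepsilon)$ obtained by iterating Proposition~\ref{pro2.1}. It suffices to establish, for every $\mathfrak{T}<t^*_\varepsilon$, the bound asserted in the statement with $J_3$ continuous on $[0,\infty)$ and—crucially—\emph{independent of $\varepsilon$ and of $t^*_\varepsilon$}; the continuation criterion attached to Proposition~\ref{pro2.1} then forces $t^*_\varepsilon=\infty$.

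The bound itself is assembled by running Propositions~\ref{pro3.1}--\ref{pro3.6} in exactly that order, verifying at each stage that the exponential weights and forcing integrals on the right-hand sides are finite quantities already secured by the earlier steps. First, Proposition~\ref{pro3.1} controls $\sup_t\|(v,T)\|_{L^2}^2$ together with $\int_0^{\mathfrak{T}}(\|\nabla_Hv\|_{L^2}^2+\|\partial_zv\|_{L^2}^2+\|\nabla_HT\|_{L^2}^2)\,dt$ in terms of $\|(v_0,T_0)\|_{L^2}$ alone, which makes $\int_0^{\mathfrak{T}}K_1\,dt$ finite and hence Proposition~\ref{pro3.2} bounds $\sup_t\|\widetilde v\|_{L^{3+\delta}}^{3+\delta}$. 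This renders $\int_0^{\mathfrak{T}}K_2\,dt$ finite (using $\sup_t\|v\|_{L^2}^2\int_0^{\mathfrak{T}}\|\nabla_Hv\|_{L^2}^2\,dt<\infty$ and the time-integrability of $\|\widetilde v\|_{L^{3+\delta}}^{2+6/\delta}$), so Proposition~\ref{pro3.3} yields $\sup_t\|\partial_zv\|_{L^2}^2$ and $\int_0^{\mathfrak{T}}(\|\nabla_H\partial_zv\|_{L^2}^2+\|\partial_z^2v\|_{L^2}^2)\,dt$. At this point $\int_0^{\mathfrak{T}}K_3\,dt$ is finite, since $\int_0^{\mathfrak{T}}\|\partial_z\nabla_Hv\|_{L^2}^2\,dt$ is now available and $\int_0^{\mathfrak{T}}\|\partial_zv\|_{L^2}^2\|\partial_z\nabla_Hv\|_{L^2}^2\,dt\le\sup_t\|\partial_zv\|_{L^2}^2\int_0^{\mathfrak{T}}\|\partial_z\nabla_Hv\|_{L^2}^2\,dt$; thus Proposition~\ref{pro3.4} controls $\sup_t\|\partial_zT\|_{L^2}^2$ and $\int_0^{\mathfrak{T}}\|\nabla_H\partial_zT\|_{L^2}^2\,dt$. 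Proposition~\ref{pro3.5} then delivers $\sup_t\|\nabla v\|_{L^2}^2$, $\int_0^{\mathfrak{T}}\|\partial_tv\|_{L^2}^2\,dt$ and $\int_0^{\mathfrak{T}}\|\nabla^2v\|_{L^2}^2\,dt\le J_1(\mathfrak{T})$, and finally Proposition~\ref{pro3.6} closes the loop: both $\int_0^{\mathfrak{T}}K_4\,dt\le\sup_t\|(v,\partial_zv)\|_{L^2}^2\int_0^{\mathfrak{T}}\|(\nabla_Hv,\nabla_H\partial_zv)\|_{L^2}^2\,dt$ and $\int_0^{\mathfrak{T}}K_5\,dt$ are finite by the foregoing, giving $\sup_t\|\nabla_HT\|_{L^2}^2$, $\int_0^{\mathfrak{T}}\|\nabla_H^2T\|_{L^2}^2\,dt$ and $\int_0^{\mathfrak{T}}\|\partial_tT\|_{L^2}^2\,dt\le J_2(\mathfrak{T})$. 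Summing these outputs produces the stated inequality with $J_3$ continuous and determined by $h,Re_1,Re_2,R_T,\|(v_0,T_0)\|_{H^1}$.

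With the finite-time $H^1$ bound secured, global existence follows: if $t^*_\varepsilon<\infty$, then continuity of $J_3$ keeps $\|(v,T)(t)\|_{H^1}$ bounded as $t\uparrow t^*_\varepsilon$, so Proposition~\ref{pro2.1} applied at a time close to $t^*_\varepsilon$ extends the solution beyond $t^*_\varepsilon$ by a positive increment, contradicting maximality. Uniqueness is inherited from the local uniqueness in Proposition~\ref{pro2.1}; alternatively it follows from an $L^2$ energy estimate for the difference of two solutions, in which the quadratic interaction and pressure terms are absorbed using the $H^1$ and $H^2$ regularity just established and closed by Gr\"onwall's inequality.

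I expect the only genuine difficulty to be \emph{bookkeeping the dependencies} so that the six propositions chain together without circularity. The scheme works precisely because each $K_i$ and each forcing integral involves only norms whose supremum-in-time or time-integral has already been obtained at an earlier stage—in particular the mixed second derivative $\|\partial_z\nabla_Hv\|_{L^2}$ that appears in $K_3$ is produced by Proposition~\ref{pro3.3} before it is consumed in Propositions~\ref{pro3.4}--\ref{pro3.5}. A second point requiring care is that the $\varepsilon$-diffusion term $\varepsilon\|\partial_z^2T\|_{L^2}^2$ enters every energy identity with a favorable sign and the constants never degenerate as $\varepsilon\to0$, so $J_3$ is genuinely $\varepsilon$-independent; this $\varepsilon$-uniformity is exactly what makes the estimate usable for the vanishing vertical-diffusivity limit performed later.
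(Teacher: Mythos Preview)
Your argument is correct and follows exactly the paper's approach: the paper's proof simply observes that a finite maximal time $t^*_\varepsilon$ would force $\limsup_{t\to(t^*_\varepsilon)^-}\|(v,T)\|_{H^1}=\infty$, which contradicts Propositions~\ref{pro3.1}--\ref{pro3.6}, and then reads off the estimate from those same propositions. Your version is more explicit in verifying that each $K_i$ is integrable from the preceding steps, but the logical structure---chain the six propositions, invoke the blow-up criterion, conclude $t^*_\varepsilon=\infty$---is identical.
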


\begin{proof}
As before, let $(v,T)$ be the unique strong solution corresponding to the initial data $(v_0,T_0)$, with maximal interval of existence $[0,t^*_\varepsilon)$. If $t^*_\varepsilon<\infty$, then 
\begin{equation*}
\limsup_{t\rightarrow (t^*_\varepsilon)^-}\big(\|v\|_{H^1}(t)+\|T\|_{H^1}(t)\big)=\infty.
\end{equation*}
However, this contradicts to Proposition \ref{pro3.1}--\ref{pro3.6}.  Therefore, $t^*_\varepsilon=\infty$ and $(v,T)$ is a global solution. The corresponding estimate follows from Propositions \ref{pro3.1}--\ref{pro3.6}.
\end{proof}

\section{A priori estimates depending only on $\|(v_0, T_0, \partial_zv_0,\partial_zT_0)\|_{L^2}$}

Note that the estimate of $\partial_zv$ carried out
in the previous section depends not only on the assumption $\partial_zv_0\in L^2(\Omega)$ but also on $\widetilde{v}_0\in L^{3+\delta}(\Omega)$, for $0<\delta\leq 1$; however, these assumptions
are not satisfied by the initial data of $z$-weak solutions. As a result,
the estimates established in the previous section are not sufficient and some extra estimates are required to prove the existence of $z$-weak solutions.

In this section, we establish the $L^\infty(0,\mathfrak T; L^2)$ estimate of $(\partial_zv,\partial_zT)$ and some $t$-weighted $L^\infty(0,\mathfrak T; L^2)$ estimate of $(\nabla_H v,\nabla_H T)$ to the strong solution $(v,T)$ established in Theorem \ref{thm3.1}. These estimates depend
only on $\|(T_0,v_0,\partial_zv_0,\partial_zT_0)\|_{L^2}$, rather than on $\|(v_0, T_0)\|_{H^1}$, even though we
still assume that $(v_0, T_0)\in H^1$.

First, one has the following local in time a priori estimate for $\partial_zv$.

\begin{proposition}\label{pro4.1}
Let $(v,T)$ be a global strong solution to system \eqref{19}--\eqref{21}, subject to \eqref{22}--\eqref{24}.
There is a small positive number $\delta_0\leq1$ depending only on $h, Re_1, Re_2, R_T$, and $\|(v_0, T_0)\|_{L^2}$, such
that if
\begin{equation*}
\sup_{X\in M}\int_{-h}^0\int_{D_{2r_0}(X)\cap M}|\partial_zv_0|^2dxdydz\leq \delta_0^2\label{46}
\end{equation*}
for some positive number $r_0\leq 1$, where $D_{2r_0}(X)$ is the disk of radius $2r_0$ and centered at $X\in M$,
then the following estimate holds
\begin{equation*}
\sup_{0\leq t\leq t_0^*}\|\partial_zv\|_{L^2}^2(t)+\int_0^{t_0^*}\Big(\|\nabla_H\partial_zv\|_{L^2}^2+\|\partial_z^2v\|_{L^2}^2\Big)dt\leq C_K\label{50}
\end{equation*}
for some positive constant $C_K$ depending only on $h, Re_1, Re_2,$ $R_T$, $\delta_0$, $r_0, M$, and $\|(v_0, T_0)\|_{L^2}$, where $t_0^*=\min\left\{1,\frac{r_0^4\delta_0^2}{C_0}
\right\}$ with $C_0$ depending only on $h, Re_1, Re_2, R_T,$ and $\|(v_0,T_0)\|_{L^2}$.
\end{proposition}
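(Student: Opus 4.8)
The plan is to work directly with the equation for $\partial_z v$ and to propagate a \emph{local-in-space} $L^2$-smallness of $\partial_z v$, which is precisely the quantity that controls the only dangerous nonlinear term. Differentiating \eqref{19} in $z$ and using $w=-\int_{-h}^z\nabla_H\cdot v\,d\xi$, $\partial_z w=-\nabla_H\cdot v$, the function $u:=\partial_z v$ solves
\[
\partial_t u-\frac{1}{Re_1}\Delta_H u-\frac{1}{Re_2}\partial_z^2 u+(v\cdot\nabla_H)u+w\partial_z u+(u\cdot\nabla_H)v-(\nabla_H\cdot v)u-\nabla_H T+fk\times u=0,
\]
with $u=0$ on all of $\partial\Omega$ (indeed $v=0$ on $\Gamma_s$ forces $\partial_z v=0$ there, while $\partial_z v=0$ on $\Gamma_u\cup\Gamma_b$). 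Testing against $u$, the transport and Coriolis terms drop and the only contributions not already governed by Proposition \ref{pro3.1} are $\int_\Omega(u\cdot\nabla_H)v\cdot u$ and $\int_\Omega(\nabla_H\cdot v)|u|^2$, both of the form $\int_\Omega|\nabla_H u||v||u|$ after integrating by parts.

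First I would record the mechanism that turns smallness of $\partial_z v_0$ into smallness of the baroclinic mode. Writing $v=\overline v+\widetilde v$, one has $\partial_z\widetilde v=u$ and $\partial_z(\nabla_H\widetilde v)=\nabla_H u$, while both $\widetilde v$ and $\nabla_H\widetilde v$ have zero vertical average; hence the vertical Poincaré inequality for mean-zero functions gives, on every cylinder $D\times(-h,0)$,
\[
\|\widetilde v\|_{L^2(D\times(-h,0))}\le C\|u\|_{L^2(D\times(-h,0))},\qquad \|\nabla_H\widetilde v\|_{L^2(D\times(-h,0))}\le C\|\nabla_H u\|_{L^2(D\times(-h,0))}.
\]
Thus the hypothesis $\sup_X\|\partial_z v_0\|_{L^2(D_{2r_0}(X))}\le\delta_0$ is, up to a constant, a local smallness of $\widetilde v_0$. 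The barotropic part $\overline v$ admits no such local smallness but is globally controlled by Proposition \ref{pro3.1}, and its contribution $\int_\Omega|\nabla_H u||\overline v||u|$ is handled exactly as the term $I_{12}$ in the proof of Proposition \ref{pro3.3} (Lemma \ref{lemma1} plus the Poincaré inequality for $\overline v$), producing a Grönwall coefficient $C\|v\|_{L^2}^2\|\nabla_H v\|_{L^2}^2$ that is time-integrable with an $M$-independent bound on $[0,1]$.

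Next I would localize. For $X\in M$ take a cutoff $\phi=\phi_X$ with $\phi\equiv1$ on $D_{r_0}(X)$, $\mathrm{supp}\,\phi\subset D_{2r_0}(X)$ and $|\nabla_H\phi|\le C/r_0$, and test the $u$-equation against $\phi^2u$. The transport part leaves only the commutator $-\int_\Omega\phi(\nabla_H\phi\cdot v)|u|^2$, the horizontal diffusion leaves $\frac{1}{Re_1}\|\phi\nabla_H u\|_{L^2}^2$ plus a commutator $O(1/r_0)$, and the vertical diffusion leaves $\frac{1}{Re_2}\|\phi\partial_z u\|_{L^2}^2$ with no commutator. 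The dangerous term localizes to $\int_\Omega\phi^2|v||\nabla_H u||u|$; applying Lemma \ref{lemma1} on $D_{2r_0}(X)$ and inserting the transfer estimates above, its baroclinic part is bounded by $C\|u\|_{L^2(D_{2r_0}(X))}\|\phi\nabla_H u\|_{L^2}^2$ plus lower-order terms, so once $\|u\|_{L^2(D_{2r_0}(X))}\le2\delta_0$ with $\delta_0$ small (depending only on $Re_1$ and the energy, never on $r_0$ or $M$) it is absorbed by the dissipation. I would then set $g(t):=\sup_{X\in M}\|u(t)\|_{L^2(D_{2r_0}(X)\times(-h,0))}^2$ and run a continuity argument: on the interval where $g\le4\delta_0^2$, every remaining source — the commutators, the buoyancy $\int_\Omega\nabla_H T\cdot\phi^2u$, and the $\overline v$-term — is estimated using $\|\phi_Xu\|_{L^2}\le2\delta_0$ together with Cauchy–Schwarz in time and the $M$-independent energy bounds of Proposition \ref{pro3.1} on $[0,1]$; each then carries a factor $\delta_0$ and a power of $t$, so choosing $C_0$ large (depending on $h,Re_1,Re_2,R_T,\|(v_0,T_0)\|_{L^2}$ but not on $r_0,M$) forces $g(t)\le\delta_0^2+\tfrac{C_0}{r_0^4}t<4\delta_0^2$ for $t\le t_0^*=\min\{1,r_0^4\delta_0^2/C_0\}$. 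The precise powers — $|\nabla_H\phi|^2\sim r_0^{-2}$ squared through Young's inequality against the $\delta_0^2$ gained from local smallness — are what produce the stated $t_0^*$.

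Finally, for the global bound in the statement I would cover $M$ by finitely many disks $D_{r_0}(X_j)$, $j=1,\dots,N(M,r_0)$, with bounded overlap and $\sum_j\phi_{X_j}^2\ge c_0>0$, and sum the localized inequalities (whose dissipation has already been absorbed disk by disk). Using $g(t)\le4\delta_0^2$ on $[0,t_0^*]$ and the finite overlap, this yields $\sup_{[0,t_0^*]}\|\partial_z v\|_{L^2}^2+\int_0^{t_0^*}(\|\nabla_H\partial_z v\|_{L^2}^2+\|\partial_z^2v\|_{L^2}^2)\,dt\le C_K$ with $C_K\sim N(M,r_0)\,\delta_0^2/c_0$, which legitimately depends on $M$ and $r_0$. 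The main obstacle is bookkeeping rather than any single inequality: one must keep $\delta_0$ and $C_0$ independent of the covering number $N(M,r_0)$, which is possible only if the dissipation is absorbed \emph{inside each localized estimate} (never after summing over the cover) and if every constant is traced to the scale-invariant constant of Lemma \ref{lemma1}, to $Re_1,Re_2,R_T,h$, and to the $M$-independent energy bounds from Proposition \ref{pro3.1} on the short interval $t_0^*\le1$. The local transfer estimates $\|\widetilde v\|_{L^2(D)}\lesssim\|\partial_z v\|_{L^2(D)}$ and $\|\nabla_H\widetilde v\|_{L^2(D)}\lesssim\|\nabla_H\partial_z v\|_{L^2(D)}$, which feed the local smallness into the nonlinearity, are the structural heart of the argument.
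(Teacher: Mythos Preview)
The paper does not actually prove Proposition~\ref{pro4.1}; it simply states that the argument is identical to that of Proposition~3.2 in \cite{Cao1} and omits the details. Your proposal reconstructs precisely that argument: derive the equation for $u=\partial_z v$ (with $u=0$ on all of $\partial\Omega$), localize via cutoffs $\phi_X$ supported in $D_{2r_0}(X)$, transfer local smallness of $\partial_z v$ to the baroclinic mode $\widetilde v$ through the vertical Poincar\'e inequality so that the critical cubic term $\int\phi^2|\widetilde v||\nabla_H u||u|$ can be absorbed into the dissipation once $\delta_0$ is small, handle the barotropic contribution and commutators by the energy bounds of Proposition~\ref{pro3.1} together with Lemma~\ref{lemma1}, propagate the smallness of $g(t)=\sup_X\|u(t)\|_{L^2(D_{2r_0}(X))}^2$ by a continuity argument up to $t_0^*=\min\{1,r_0^4\delta_0^2/C_0\}$, and finally sum over a finite cover of $M$ to obtain the global bound $C_K$. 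Your emphasis that $\delta_0$ and $C_0$ must be kept independent of the covering number (by absorbing dissipation disk by disk and tracing constants to the scale-invariant Lemma~\ref{lemma1}) is exactly the delicate point of the construction. In short, your sketch is correct and matches the referenced approach.
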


\begin{proof}
  The proof of this proposition is exactly the same as that of Proposition 3.2 in \cite{Cao1} and thus is omitted
  here.
\end{proof}

Next, the short time estimate of $\partial_zT$ is given in the following proposition.

\begin{proposition}\label{pro4.2}
Let $(v,T)$ be as in Theorem \ref{thm3.1} and let $t_0^*$ and $C_K$ be the positive constants in Proposition \ref{pro4.1}.
Then, one has
\begin{align*}
\sup_{0\leq t\leq t_0^*}\|\partial_z T\|_{L^2}^2(t) +\frac{1}{R_T}\int_{0}^{t_0^*} \|\nabla_H\partial_z T\|_{L^2}^2 dt
\leq e^{C(1+C_K^2)}\big(\|\partial_zT_0\|_{L^2}^2+1\big) \label{51}
\end{align*}
for some positive constant $C$ depends only on $h, Re_1, Re_2, R_T, C_K$, and $\|(v_0,T_0)\|_{L^2}$.
\end{proposition}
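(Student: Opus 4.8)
The plan is to repeat, essentially verbatim, the energy estimate for $\partial_z T$ carried out in Proposition \ref{pro3.4}, and then to bound the resulting Gr\"onwall exponent on the short interval $[0,t_0^*]$ using the estimate $C_K$ of Proposition \ref{pro4.1} together with the basic energy bounds of Proposition \ref{pro3.1}. This is what removes the $H^1$-dependence that the estimates in Section \ref{secstrong} still carried.

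First I would multiply equation \eqref{21} by $-\partial_z^2 T$ and integrate over $\Omega$. Using the boundary conditions \eqref{23}--\eqref{24}---in particular $\partial_z T=0$ on $\Gamma_u\cup\Gamma_b$, $v=0$ on $\Gamma_s$, and $\partial_nT=-\alpha_TT$ on $\Gamma_s$, which being tangential to $\Gamma_s$ differentiates in $z$ to $\partial_n\partial_z T=-\alpha_T\partial_z T$---integration by parts produces exactly the identity from Proposition \ref{pro3.4},
\begin{equation*}
\begin{aligned}
&\frac{1}{2}\frac{d}{dt}\|\partial_z T\|_{L^2}^2+\frac{1}{R_T}\|\nabla_H\partial_z T\|_{L^2}^2+\frac{\alpha_T}{R_T}\|\partial_z T\|_{L^2(\Gamma_s)}^2+\varepsilon\|\partial_z^2T\|_{L^2}^2\\
&\qquad=-\int_\Omega\partial_z v\cdot\nabla_H T\,\partial_z T\,d\Omega+\int_\Omega\nabla_H\cdot v\,|\partial_z T|^2\,d\Omega.
\end{aligned}
\end{equation*}
The two terms on the right are handled exactly as there: bounding $|\nabla_H T|$ and $|\nabla_H\cdot v|$ pointwise in $z$ via $|f|\le\int_{-h}^0|\partial_z f|\,dz+\frac1h\int_{-h}^0|f|\,dz$, applying the anisotropic Lemma \ref{lemma1} and then the Poincar\'e and Young inequalities, one reaches the differential inequality
\begin{equation*}
\frac{d}{dt}\|\partial_z T\|_{L^2}^2+\frac{1}{R_T}\|\nabla_H\partial_z T\|_{L^2}^2\leq CK_3(t)\|\partial_z T\|_{L^2}^2+C\|\nabla_H T\|_{L^2}^2,
\end{equation*}
with $K_3$ the same quantity as in Proposition \ref{pro3.4}. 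The crucial feature is that this closes using only $\|\partial_z v\|_{L^2}$, $\|\nabla_H\partial_z v\|_{L^2}$, $\|\nabla_H v\|_{L^2}$, $\|\partial_z T\|_{L^2}$, $\|\nabla_H\partial_z T\|_{L^2}$ and $\|\nabla_H T\|_{L^2}$, with no $H^2$ norm of $v$ nor $L^\infty$-in-time norm of $\nabla_H T$.

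The only genuinely new step is the control of the Gr\"onwall factor. Since $t_0^*\leq 1$, and since Proposition \ref{pro4.1} gives $\sup_{[0,t_0^*]}\|\partial_z v\|_{L^2}^2\leq C_K$ and $\int_0^{t_0^*}\|\nabla_H\partial_z v\|_{L^2}^2\,dt\leq C_K$, while $\int_0^{t_0^*}\|\nabla_H v\|_{L^2}^2\,dt$ and $\int_0^{t_0^*}\|\nabla_H T\|_{L^2}^2\,dt$ are bounded by a constant depending only on $\|(v_0,T_0)\|_{L^2}$ through Proposition \ref{pro3.1}, I would estimate
\begin{equation*}
\begin{aligned}
\int_0^{t_0^*}K_3(t)\,dt&\leq C\Big(1+\int_0^{t_0^*}\|\nabla_H\partial_z v\|_{L^2}^2\,dt\\
&\qquad\qquad+\sup_{[0,t_0^*]}\|\partial_z v\|_{L^2}^2\int_0^{t_0^*}\|\nabla_H\partial_z v\|_{L^2}^2\,dt\Big)\leq C(1+C_K^2),
\end{aligned}
\end{equation*}
where the integral of $\|\nabla_H v\|_{L^2}^2$ has been absorbed into the leading constant. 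Gr\"onwall's inequality applied to the differential inequality above, together with absorbing the bounded quantity $C\int_0^{t_0^*}\|\nabla_H T\|_{L^2}^2\,dt$ into the additive "$+1$", then yields the claimed bound $e^{C(1+C_K^2)}(\|\partial_z T_0\|_{L^2}^2+1)$ after discarding the nonnegative $\varepsilon$- and $\alpha_T$-boundary terms.

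I expect the main obstacle to lie not in the Gr\"onwall bookkeeping but in verifying that the nonlinear terms on the right of the energy identity can be closed entirely in terms of quantities controlled by $C_K$ and the $L^2$ data; this is precisely where the anisotropic inequality of Lemma \ref{lemma1} is indispensable, and it is what lets the argument dispense with the $H^1$ dependence inherent to the estimates of Section \ref{secstrong}.
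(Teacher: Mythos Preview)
Your proposal is correct and matches the paper's approach exactly: the paper simply invokes the conclusion of Proposition \ref{pro3.4} with $\mathfrak{T}=t_0^*$ and bounds $\int_0^{t_0^*}\|\nabla_HT\|_{L^2}^2\,dt\leq C$ via Proposition \ref{pro3.1} and $\int_0^{t_0^*}K_3(t)\,dt\leq C+C_K^2$ via Propositions \ref{pro3.1} and \ref{pro4.1}, which is precisely the Gr\"onwall bookkeeping you spell out. You re-derive the differential inequality rather than quote it, but the substance is identical.
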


\begin{proof}
It follows from Proposition \ref{pro3.1} and Proposition \ref{pro4.1} that
\begin{equation*}
\int_0^{t_0^*}\|\nabla_HT\|_{L^2}^2dt\leq C,\qquad \int_0^{t_0^*}K_3(t)dt\leq C+C_K^2,
\end{equation*}
where $C$ depends only on $h, Re_1, Re_2, R_T$ and $\|(v_0,T_0)\|_{L^2}$. With the aid of this and applying Proposition \ref{pro3.4} for
$\mathfrak{T}=t^*_0$, one gets the conclusion.
\end{proof}

Then, we have the $t$-weight estimates of $\nabla_Hv$ and $\partial_tv$.

\begin{proposition}\label{pro4.3}
Let $(v,T)$ be as in Theorem \ref{thm3.1} and let $t_0^*$ and $C_K$ be the positive constants in Proposition \ref{pro4.1}. Then, the following hold
\begin{equation*}
\sup_{0\leq t\leq t_0^*}\|\sqrt t\nabla_H v\|_{L^2}^2+\int_0^{t_0^*}t(\|\partial_t v\|_{L^2}^2+\|\nabla^2v\|_{L^2}^2)dt\leq C \label{55}
\end{equation*}
for a positive constant $C$ depending only on $h, Re_1, Re_2, R_T, C_K$, and $\|(v_0,T_0)\|_{L^2}$.
\end{proposition}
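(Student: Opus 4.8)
The plan is to re-run the energy estimate that produced Proposition \ref{pro3.5}, but now carrying a factor of $t$, so as to trade the dependence on $\|\nabla v_0\|_{L^2}$ (unavailable for $z$-weak data) for time-integrated quantities that are already controlled, via Propositions \ref{pro3.1} and \ref{pro4.1}, in terms of $\|(v_0,T_0)\|_{L^2}$ and $C_K$ alone. The starting point is inequality \eqref{52} from the proof of Proposition \ref{pro3.5}; after absorbing the $\frac{1}{2}\|\partial_tv\|_{L^2}^2$ term, it reads
\begin{equation*}
\frac{d}{dt}\phi+\frac{1}{2}\|\partial_t v\|_{L^2}^2\leq C\,A(t)\,\big(\|\nabla_Hv\|_{L^2}^2+1\big),
\end{equation*}
where I abbreviate $\phi:=\frac{1}{Re_1}\|\nabla_H v\|_{L^2}^2+\frac{1}{Re_2}\|\partial_zv\|_{L^2}^2$ and $A(t):=\|\nabla_Hv\|_{L^2}^2\|v\|_{L^2}^2+\|\nabla_H\partial_zv\|_{L^2}^2\|\partial_zv\|_{L^2}^2+\|\nabla_HT\|_{L^2}^2+\|v\|_{L^2}^2+1$.

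Writing $y:=t\phi$ and using $t\frac{d\phi}{dt}=\frac{d}{dt}(t\phi)-\phi$ together with $t\|\nabla_Hv\|_{L^2}^2\leq Re_1\,t\phi=Re_1 y$ and $t\leq t_0^*\leq 1$, multiplication of the above by $t$ gives
\begin{equation*}
\frac{d}{dt}y+\frac{t}{2}\|\partial_t v\|_{L^2}^2\leq \phi+C\,A(t)\,y+C\,A(t).
\end{equation*}
Here $\int_0^{t_0^*}\phi\,dt\leq C$ and $\int_0^{t_0^*}A(t)\,dt\leq C(1+C_K^2)$ are finite with the admissible dependence: Proposition \ref{pro3.1} bounds $\sup_{0\le t\le t_0^*}\|v\|_{L^2}^2$ and $\int_0^{t_0^*}\big(\|\nabla_Hv\|_{L^2}^2+\|\partial_zv\|_{L^2}^2+\|\nabla_HT\|_{L^2}^2\big)\,dt$ by $\|(v_0,T_0)\|_{L^2}^2$, whereas Proposition \ref{pro4.1} provides $\sup_{0\le t\le t_0^*}\|\partial_zv\|_{L^2}^2\leq C_K$ and $\int_0^{t_0^*}\|\nabla_H\partial_zv\|_{L^2}^2\,dt\leq C_K$, which together bound the delicate product $\int_0^{t_0^*}\|\nabla_H\partial_zv\|_{L^2}^2\|\partial_zv\|_{L^2}^2\,dt\leq C_K^2$. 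Since $\phi(0)<\infty$ (as $v_0\in H^1$), we have $y(0)=0$, so Gr\"onwall's inequality yields $\sup_{0\le t\le t_0^*}y(t)\leq e^{C(1+C_K^2)}\big(\int_0^{t_0^*}\phi\,dt+C(1+C_K^2)\big)\leq C$; as $\|\sqrt t\,\nabla_Hv\|_{L^2}^2=t\|\nabla_Hv\|_{L^2}^2\leq Re_1 y$, this is exactly the first claimed bound. Integrating the last displayed inequality over $[0,t_0^*]$ and using the uniform bound on $y$ together with $\int_0^{t_0^*}A\,dt\leq C$ then gives $\int_0^{t_0^*}t\|\partial_tv\|_{L^2}^2\,dt\leq C$.

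For the Hessian term I would square \eqref{41}, multiply by $t$, and integrate. Each resulting contribution factors as a bounded quantity times an integrable one: for example $\int_0^{t_0^*}t\|\nabla_Hv\|_{L^2}^4\|v\|_{L^2}^2\,dt\leq\big(\sup_{0\le t\le t_0^*}t\|\nabla_Hv\|_{L^2}^2\big)\big(\sup_{0\le t\le t_0^*}\|v\|_{L^2}^2\big)\int_0^{t_0^*}\|\nabla_Hv\|_{L^2}^2\,dt$ and $\int_0^{t_0^*}t\|\nabla_Hv\|_{L^2}^2\|\partial_zv\|_{L^2}^2\|\nabla_H\partial_zv\|_{L^2}^2\,dt\leq\big(\sup_{0\le t\le t_0^*}t\|\nabla_Hv\|_{L^2}^2\big)\big(\sup_{0\le t\le t_0^*}\|\partial_zv\|_{L^2}^2\big)\int_0^{t_0^*}\|\nabla_H\partial_zv\|_{L^2}^2\,dt$, both finite by the bounds just established and by Propositions \ref{pro3.1} and \ref{pro4.1}; the remaining terms $t\|\nabla_HT\|_{L^2}^2$, $t\|v\|_{L^2}^2$ and $t\|\partial_tv\|_{L^2}^2$ are controlled by Proposition \ref{pro3.1} and the $\partial_tv$ estimate above. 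This gives $\int_0^{t_0^*}t\|\nabla^2v\|_{L^2}^2\,dt\leq C$, completing the proof. The only real obstacle is keeping the Gr\"onwall coefficient $A(t)$ integrable on $[0,t_0^*]$ using data bounds that do not see $\|\nabla v_0\|_{L^2}$; the critical term $\|\nabla_H\partial_zv\|_{L^2}^2\|\partial_zv\|_{L^2}^2$ is integrable precisely because of the short-time estimate of Proposition \ref{pro4.1}, which is the new ingredient of Section 4 and the reason the argument is confined to $[0,t_0^*]$.
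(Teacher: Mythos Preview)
Your proposal is correct and follows essentially the same approach as the paper: multiply the differential inequality \eqref{52} by $t$, use the product rule to produce the integrable forcing term $\phi\sim\|v\|_{H^1}^2$, apply Gr\"onwall with the integrability of $A(t)$ supplied by Propositions~\ref{pro3.1} and~\ref{pro4.1}, and then recover the $\int_0^{t_0^*}t\|\nabla^2v\|_{L^2}^2\,dt$ bound from \eqref{41}. Your write-up is in fact a bit more explicit than the paper's in tracking the dependence of $\int_0^{t_0^*}A(t)\,dt$ on $C_K$ and in verifying term-by-term the boundedness after squaring \eqref{41}.
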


\begin{proof}
Multiplying \eqref{52} with $t$ yields
\begin{eqnarray*}
&&\frac{d}{dt}\left(\frac{2t}{Re_1}\|\nabla_H v\|_{L^2}^2+\frac{2t}{Re_2}\|\partial_zv\|_{L^2}^2\right)+t\|\partial_t v\|_{L^2}^2\\
&\leq& C\big(\|\nabla_Hv\|_{L^2}^2\|v\|_{L^2}^2+\|\nabla_H\partial_zv\|_{L^2}^2\|\partial_zv\|_{L^2}^2+\|\nabla_HT\|_{L^2}^2\\
&&+\|v\|_{L^2}^2+1\big)\big(t\|\nabla_Hv\|_{L^2}^2+t\big)+C\|v\|_{H^1}^2,
\end{eqnarray*}
from which, by the Gr\"onwall inequality and applying Proposition \ref{pro3.1} and Proposition \ref{pro4.1}, one gets
\begin{equation*}
\sup_{0\leq t\leq t_0^*}\big(t\|\nabla_H v\|_{L^2}^2+t\|\partial_zv\|_{L^2}^2\big)+\int_0^{t_0^*}t \|\partial_t v\|_{L^2}^2dt\leq C. \label{55}
\end{equation*}
Thanks to this, using \eqref{41}, and applying Proposition \ref{pro3.1} and Proposition \ref{pro4.1} again, one gets the
estimate for $\nabla^2v$ and, thus, proves the conclusion.
\end{proof}

And finally, one can deduce the $t$-weight estimates of $\nabla_HT$ and $\partial_tT$ in the below.

\begin{proposition}\label{pro4.4}
Let $(v,T)$ be as in Theorem \ref{thm3.1} and let $t_0^*$ and $C_K$ be the positive constants in Proposition \ref{pro4.1}. Then, the following holds
\begin{align*}
&\sup_{0\leq t\leq t_0^*}\|\sqrt t\nabla_H T\|_{L^2}^2
+\int_0^{t_0^*}t(\|\nabla_H^2T\|_{L^2}^2+\|\partial_tT\|_{L^2}^2)dt\leq  C\label{57}
\end{align*}
for a positive constant $C$ depending only on $h, Re_1, Re_2,$ $R_T,$ $\|(v_0,T_0,\partial_zT_0)\|_{L^2}$, and $C_K$.
\end{proposition}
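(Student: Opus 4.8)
The plan is to mirror the proof of Proposition \ref{pro4.3}: the two differential inequalities already obtained in the proof of Proposition \ref{pro3.6} --- namely \eqref{53} controlling $\frac{d}{dt}\|\nabla_H T\|_{L^2}^2$ and the one-step-earlier bound \eqref{54} controlling $\|\partial_t T\|_{L^2}^2$ --- are each multiplied by the weight $t$ and then integrated via Gr\"onwall's inequality over $[0,t_0^*]$. The reason for inserting the weight is that the coefficients $K_4$, $K_5$ and the forcing $\|\Delta_H v\|_{L^2}^2$ then need only be controlled in the time-integrated sense supplied by Propositions \ref{pro3.1}, \ref{pro4.1}, \ref{pro4.2} and \ref{pro4.3}, all of which depend solely on $\|(v_0,T_0,\partial_z v_0,\partial_z T_0)\|_{L^2}$ and $C_K$ and not on the full $H^1$ norm. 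I would first record the ``budget'' facts used repeatedly: $\int_0^{t_0^*}K_4(t)\,dt\le C$ (from the $L^\infty_tL^2$ bounds on $(v,\partial_z v)$ in Propositions \ref{pro3.1}, \ref{pro4.1} and the $L^2_t$ bounds on $(\nabla_H v,\nabla_H\partial_z v)$ there); $\sup_{[0,t_0^*]}\|\partial_z T\|_{L^2}^2\le C$ together with $\int_0^{t_0^*}\|\nabla_H\partial_z T\|_{L^2}^2\,dt\le C$ (Proposition \ref{pro4.2}); and the weighted bounds $\sup_{[0,t_0^*]}t\|\nabla_H v\|_{L^2}^2\le C$ and $\int_0^{t_0^*}t\|\nabla^2 v\|_{L^2}^2\,dt\le C$ (Proposition \ref{pro4.3}).

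For the first conclusion I would multiply \eqref{53} by $t$ and use $t\frac{d}{dt}(\,\cdot\,)=\frac{d}{dt}(t\,\cdot\,)-(\,\cdot\,)$ to obtain
\begin{equation*}
\begin{aligned}
&\frac{d}{dt}\Big(t\|\nabla_H T\|_{L^2}^2+t\alpha_T\|T\|_{L^2(\Gamma_s)}^2\Big)+\frac{t}{R_T}\|\Delta_H T\|_{L^2}^2\\
&\quad\le CK_4\big(t\|\nabla_H T\|_{L^2}^2\big)+CtK_5+Ct\|\Delta_H v\|_{L^2}^2+\|\nabla_H T\|_{L^2}^2+\alpha_T\|T\|_{L^2(\Gamma_s)}^2.
\end{aligned}
\end{equation*}
Here the Gr\"onwall factor is $\exp\big(C\int_0^{t_0^*}K_4\,dt\big)\le C$, the leftover lower-order terms are integrable by Proposition \ref{pro3.1}, the term $\int_0^{t_0^*}t\|\Delta_H v\|_{L^2}^2\,dt\le C$ by Proposition \ref{pro4.3}, and $\int_0^{t_0^*}tK_5\,dt\le C$ (handled below). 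This yields $\sup_{[0,t_0^*]}t\|\nabla_H T\|_{L^2}^2\le C$ and $\int_0^{t_0^*}t\|\Delta_H T\|_{L^2}^2\,dt\le C$; the elliptic estimate for the horizontal Laplacian then upgrades the latter to $\int_0^{t_0^*}t\|\nabla_H^2 T\|_{L^2}^2\,dt\le C$.

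The $\partial_t T$ estimate is produced in the same fashion from \eqref{54}: multiplying by $t$, absorbing $\tfrac12 t\|\partial_t T\|_{L^2}^2$ on the left, rewriting the $t\frac{d}{dt}(\,\cdot\,)$ term as above, and integrating over $[0,t_0^*]$ bounds $\int_0^{t_0^*}t\|\partial_t T\|_{L^2}^2\,dt$ by $C\int_0^{t_0^*}\big(tK_4\|\nabla_H T\|_{L^2}^2+tK_5+t\|\nabla_H^2 T\|_{L^2}^2+t\|\nabla_H^2 v\|_{L^2}^2\big)\,dt$ plus the integrable lower-order terms from the bracket. Each summand is now finite: the first conclusion gives $\int t\|\nabla_H^2 T\|_{L^2}^2\,dt\le C$ and $\sup t\|\nabla_H T\|_{L^2}^2\le C$, while Proposition \ref{pro4.3} gives $\int t\|\nabla^2 v\|_{L^2}^2\,dt\le C$.

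The one step requiring genuine care --- and the main obstacle --- is the treatment of $\int_0^{t_0^*}tK_5\,dt$ and of the Gr\"onwall source $\int_0^{t_0^*}tK_4\|\nabla_H T\|_{L^2}^2\,dt$, since $\|\nabla_H v\|_{L^2}^2$ is only integrable in time, not bounded. The device is to let the weight $t$ ride on the velocity rather than on the temperature: by Proposition \ref{pro4.3}, $tK_5=\big(t\|\nabla_H v\|_{L^2}^2\big)\big(\|\partial_z T\|_{L^2}^4+\|\partial_z T\|_{L^2}^2\|\nabla_H\partial_z T\|_{L^2}^2\big)\le C\big(\|\partial_z T\|_{L^2}^4+\|\partial_z T\|_{L^2}^2\|\nabla_H\partial_z T\|_{L^2}^2\big)$, after which $\sup\|\partial_z T\|_{L^2}^2\le C$ and $\int(\|\partial_z T\|_{L^2}^2+\|\nabla_H\partial_z T\|_{L^2}^2)\,dt\le C$ from Proposition \ref{pro4.2} close the integral; and for the source term one writes $tK_4\|\nabla_H T\|_{L^2}^2=K_4\big(t\|\nabla_H T\|_{L^2}^2\big)\le CK_4$ using the already-proved bound $\sup t\|\nabla_H T\|_{L^2}^2\le C$ together with $\int K_4\,dt\le C$. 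Once these two integrals are seen to be finite, both assertions of the proposition follow immediately by Gr\"onwall's inequality.
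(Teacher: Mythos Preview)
Your proposal is correct and follows essentially the same route as the paper: multiply \eqref{53} and \eqref{54} by $t$, feed in the budgets from Propositions \ref{pro3.1}, \ref{pro4.1}--\ref{pro4.3}, and close with Gr\"onwall. The paper records the key integrability fact as the single line $\int_0^{t_0^*}(K_4+tK_5)\,dt\le C$ without elaboration; your explicit observation that the weight $t$ in $tK_5$ should be placed on $\|\nabla_H v\|_{L^2}^2$ (bounded via Proposition \ref{pro4.3}) so that the remaining $\partial_z T$ factors are handled by Proposition \ref{pro4.2} is exactly what underlies that line.
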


\begin{proof}
Multiplying \eqref{53} with $t$, one has
\begin{equation}
\label{ADDLI0}
\begin{aligned}
&\frac{d}{dt}\Big(t\|\nabla_HT\|_{L^2}^2+t\alpha_T\|T\|_{L^2(\Gamma_s)}^2\Big)
+\frac{t}{R_T}\|\Delta_HT\|_{L^2}^2\\
\leq&C\Big(tK_4(t)\|\nabla_HT\|_{L^2}^2+tK_5(t)+t\|\Delta_Hv\|_{L^2}^2+\|\nabla_HT\|_{L^2}^2+\|T\|_{L^2}^2\Big).
\end{aligned}
\end{equation}
By Proposition \ref{pro3.1} and Propositions \ref{pro4.1}--\ref{pro4.3}, one can check that
\begin{equation}
\int_0^{t^*_0}(K_4(t)+tK_5(t))dt\leq C.\label{ADDLI1}
\end{equation}
Thanks to this, applying the Gr\"onwall inequality to (\ref{ADDLI0}), and using Proposition \ref{pro3.1} and Proposition \ref{pro4.3}, it follows
\begin{equation}
\label{ADDLI2}
\begin{aligned}
&\sup_{0\leq t\leq t_0^*}\Big(t\|\nabla_H T\|_{L^2}^2+t\alpha_T\|T\|_{L^2(\Gamma_s)}^2\Big)
+\int_0^{t_0^*}t\|\Delta_HT\|_{L^2}^2dt\leq  C.
\end{aligned}
\end{equation}

It remains to estimate $\partial_tT$.
Multiplying \eqref{54} with $t$ yields 
\begin{equation*}
\begin{aligned}
&t\|\partial_tT\|_{L^2}^2+\frac{d}{dt}\left(\frac{t}{R_T}\|\nabla_HT\|_{L^2}^2+\frac{t\alpha_T}{R_T}\|T\|_{L^2(\Gamma_s)}^2+t\varepsilon\|\partial_zT\|_{L^2}^2\right)\\
\leq&CtK_4(t)\|\nabla_HT\|_{L^2}^2+CtK_5(t)+C\big(t\|\Delta_HT\|_{L^2}^2+t\|\Delta_Hv\|_{L^2}^2\big)\\
&\ \ \ \ \ +C\big(\|\nabla_HT\|_{L^2}^2+\|T\|_{L^2}^2+\varepsilon\|\partial_zT\|_{L^2}^2\big),
\end{aligned}
\end{equation*}
from which, by the Gr\"onwall inequality, (\ref{ADDLI1}), (\ref{ADDLI2}), Proposition \ref{pro3.1}, and Proposition \ref{pro4.3}, it follows
\begin{equation*}
\begin{aligned}
\int_0^{t_0^*}t\|\partial_tT\|_{L^2}^2dt+\sup_{0\leq t\leq t_0^*} \left(\frac{t}{R_T}\|\nabla_HT\|_{L^2}^2+\frac{t\alpha_T}{R_T}\|T\|_{L^2(\Gamma_s)}^2+t\varepsilon\|\partial_zT\|_{L^2}^2\right)
\leq C.
\end{aligned}
\end{equation*}
This completes the proof.
\end{proof}

Combining the local-in-time
estimates in the previous propositions with the
global in time estimates in Theorem \ref{thm3.1}, one can get the global in time estimates depending only on
$\|(v_0, T_0, \partial_zv_0, \partial_zT_0)\|_{L^2}$. 

In fact, we have the following corollary.

\begin{corollary}\label{cor5.1}
Let $(v,T)$ be as in Theorem \ref{thm3.1} and let $\delta_0$ and $r_0$ be the positive constants in Proposition \ref{pro4.1}. Assume that (\ref{50}) holds. Then, it holds that
\begin{equation*}
\begin{aligned}
\int_{0}^{\mathfrak{T}}\Big(\|(\nabla v,\nabla\partial_z v,&\nabla_HT,\nabla_H\partial_zT)\|_{L^2}^2+t\|(\nabla_H^2v,\nabla_H^2T,\partial_tv,\partial_tT)\|_{L^2}^2\Big)dt\\
+\sup_{0\leq t\leq \mathfrak{T}}&\Big(\|(v,T,\partial_zv,\partial_zT)\|_{L^2}^2+t\|(\nabla_Hv,\nabla_HT)\|_{L^2}^2\Big)\leq J_4(\mathfrak{T})\label{65}
\end{aligned}
\end{equation*}
for any positive time $\mathfrak T$,
where $J_4$ is a continuous function determined only by $h, Re_1$, $Re_2$, $R_T$, $\delta_0, r_0, M,$ and $\|(v_0,T_0,\partial_zT_0)\|_{L^2}^2$.
\end{corollary}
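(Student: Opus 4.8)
The plan is to split $[0,\mathfrak T]$ at the short time $t_0^*$ and to combine the local-in-time estimates of this section on $[0,t_0^*]$ with the global $H^1$ theory of Theorem \ref{thm3.1} on $[t_0^*,\mathfrak T]$. The crucial observation is that, although Theorem \ref{thm3.1} only yields bounds in terms of $\|(v_0,T_0)\|_{H^1}$, the $t$-weighted estimates of Propositions \ref{pro4.3}--\ref{pro4.4} furnish a genuine $H^1$ bound at the \emph{positive} time $t_0^*$ depending only on the $L^2$-type data; feeding this bound back into Theorem \ref{thm3.1} (using that the system \eqref{19}--\eqref{21} is autonomous, so it may be restarted at $t_0^*$) closes the argument without ever invoking $\|(v_0,T_0)\|_{H^1}$.

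First, on $[0,t_0^*]$ I would collect the already-proven estimates: Proposition \ref{pro3.1} controls $\sup_t\|(v,T)\|_{L^2}^2$ and $\int_0^{t_0^*}(\|\nabla_Hv\|_{L^2}^2+\|\partial_zv\|_{L^2}^2+\|\nabla_HT\|_{L^2}^2)\,dt$; Proposition \ref{pro4.1} controls $\sup_t\|\partial_zv\|_{L^2}^2$ and $\int_0^{t_0^*}(\|\nabla_H\partial_zv\|_{L^2}^2+\|\partial_z^2v\|_{L^2}^2)\,dt$ by $C_K$; Proposition \ref{pro4.2} controls $\sup_t\|\partial_zT\|_{L^2}^2$ and $\int_0^{t_0^*}\|\nabla_H\partial_zT\|_{L^2}^2\,dt$; and Propositions \ref{pro4.3}--\ref{pro4.4} control the weighted quantities $\sup_t t\|(\nabla_Hv,\nabla_HT)\|_{L^2}^2$ together with $\int_0^{t_0^*}t(\|(\nabla_H^2v,\nabla_H^2T,\partial_tv,\partial_tT)\|_{L^2}^2+\|\nabla^2v\|_{L^2}^2)\,dt$. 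Since $t_0^*=\min\{1,r_0^4\delta_0^2/C_0\}$ depends only on the $L^2$ data, each of these bounds depends only on the quantities permitted in $J_4$, and these are precisely the terms of the conclusion restricted to $[0,t_0^*]$.

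Next I would extract $H^1$ control at $t=t_0^*$. Evaluating the weighted sup-bounds at the endpoint gives $\|\nabla_Hv(t_0^*)\|_{L^2}^2+\|\nabla_HT(t_0^*)\|_{L^2}^2\le C/t_0^*$, while Propositions \ref{pro4.1}--\ref{pro4.2} give $\|\partial_zv(t_0^*)\|_{L^2}^2+\|\partial_zT(t_0^*)\|_{L^2}^2\le C$; with the $L^2$ bound of Proposition \ref{pro3.1} this yields $\|(v,T)(t_0^*)\|_{H^1}^2\le C_1$, where $C_1$ depends only on the data allowed in $J_4$. As $(v(t_0^*),T(t_0^*))\in\mathcal V\times H^1$, I may treat it as new initial data and apply Theorem \ref{thm3.1} on $[t_0^*,\mathfrak T]$, obtaining $\sup_{t_0^*\le t\le\mathfrak T}\|(v,T)\|_{H^1}^2+\int_{t_0^*}^{\mathfrak T}(\|v\|_{H^2}^2+\|(\Delta_HT,\nabla_H\partial_zT,\partial_tv,\partial_tT)\|_{L^2}^2)\,dt\le J_3(\mathfrak T-t_0^*)$, with $J_3$ now evaluated at the $H^1$ norm $\le C_1$ and hence controlled solely by the admissible data.

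Finally I would assemble the two pieces. Over $[0,t_0^*]$ the required integrals are bounded as above; over $[t_0^*,\mathfrak T]$ every term in the conclusion is dominated by the Theorem \ref{thm3.1} bound, the $t$-weights being harmless since $t\le\mathfrak T$ there (so $\int_{t_0^*}^{\mathfrak T}t\|\cdot\|_{L^2}^2\,dt\le\mathfrak T J_3$ and $\sup_{[t_0^*,\mathfrak T]}t\|\cdot\|_{L^2}^2\le\mathfrak T J_3$). Adding the two contributions and absorbing all constants into a single continuous function $J_4(\mathfrak T)$ gives the claim; if $\mathfrak T\le t_0^*$ the second interval is empty and the first two paragraphs already suffice. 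The only genuinely substantive step is the handoff at $t_0^*$: one must be sure the $t$-weighted estimates really upgrade to an $H^1$ bound at a fixed positive time independent of $\|(v_0,T_0)\|_{H^1}$, which is exactly what the factor $1/t_0^*$ provides, $t_0^*$ being itself an $L^2$-controlled quantity. Everything else is bookkeeping, and the uniformity in $\varepsilon$ is inherited termwise from the cited results.
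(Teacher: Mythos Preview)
Your proposal is correct and follows essentially the same route as the paper: harvest the local-in-time estimates on the short interval, extract an $H^1$ bound at a fixed positive time depending only on the admissible data, restart the autonomous system there via Theorem~\ref{thm3.1}, and combine. The only cosmetic difference is that the paper hands off at $\sigma_0=t_0^*/2$ rather than at $t_0^*$ itself, which changes nothing of substance.
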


\begin{proof}
Let $t_0^*$ be the positive time in Proposition \ref{pro4.1} and set $\sigma_0=\frac{t_0^*}{2}$.
By Proposition \ref{pro3.1} and Propositions \ref{pro4.1}--\ref{pro4.4}, one has
$\left\|(v,T)\right\|_{H^1}(\sigma_0) \leq C$
for a positive constant $C$ depending only on $h$, $Re_1$, $Re_2$, $R_T$,
$\delta_0$, $r_0$, $M,$ and $\|(v_0,T_0,\partial_zT_0)\|_{L^2}^2$.
Thanks to this, by viewing $\sigma_0$ as the initial time, one can apply Theorem \ref{thm3.1} to get
$$
\sup_{\sigma_0\leq t\leq \mathfrak{T}}\|(v,T)\|_{H^1}^2+\int_{\sigma_0}^{\mathfrak{T}}\Big(\|(\nabla^2 v,\nabla_H\nabla T,\partial_tv,\partial_tT)\|_{L^2}^2+\varepsilon\|\partial_z^2T\|_{L^2}^2\Big)dt\leq J_3^*(\mathfrak{T}) \label{64}
$$
for any $\mathfrak T\geq\sigma_0$, where $J_3^*$ is a continuous function determined only by $h$, $Re_1$, $Re_2$, $R_T$, $\delta_0$, $r_0$, $M$, and
$\|(v_0,T_0,\partial_zT_0)\|_{L^2}^2$. Thanks to the above and applying Propositions \ref{pro3.1} and \ref{pro4.2}--\ref{pro4.4} again, the conclusion follows.
\end{proof}

We also have the $L^{\frac{4}{3}}(0,\mathfrak{T};\mathcal V^*)\times L^{\frac{4}{3}}(0,\mathfrak{T}; (H^1(\Omega))^*)$ estimate for $(\partial_tv,\partial_tT)$ stated in the following proposition, where $\mathcal V^*$ denotes the dual space of $\mathcal V$.

\begin{proposition}\label{pro5.1}
Let $(v,T)$ be as in Theorem \ref{thm3.1} and let $\delta_0$ and $r_0$ be the positive constants in Proposition \ref{pro4.1}. Then, the following estimates hold
$$
\|\partial_tv\|_{L^{\frac{4}{3}}(0,\mathfrak{T};\mathcal V^*)}\leq J_5(\mathfrak{T}),\quad \|\partial_tT\|_{L^{\frac{4}{3}}(0,\mathfrak{T};(H^{1}(\Omega))^*)}\leq J_6(\mathfrak{T}),\label{67}
$$
for any positive time $\mathfrak T$, where both $J_5(t)$ and $J_6(t)$ are continuous functions determined only by $h, Re_1, Re_2, R_T, \delta_0, r_0, M$, and $\|(v_0,T_0,\partial_zT_0)\|_{L^2}^2$.
\end{proposition}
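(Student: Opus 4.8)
The plan is to estimate $\partial_t v$ and $\partial_t T$ in their respective dual spaces by using the equations \eqref{19} and \eqref{21} to express the time derivatives in terms of spatial quantities, then testing against arbitrary functions $\phi\in\mathcal V$ (resp. $\psi\in H^1(\Omega)$) and bounding the resulting pairings using the a priori estimates already collected in Corollary \ref{cor5.1}. The exponent $\frac{4}{3}$ is the natural target: it is the dual Lebesgue exponent in time dictated by the fact that the worst nonlinear terms involve products like $v\otimes v$ whose $L^2_x$-norm, after Ladyzhenskaya/Gagliardo--Nirenberg interpolation, is controlled by $\|\nabla v\|_{L^2}^{3/2}$-type quantities that are only integrable to the power $\frac43$ in time.

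First, I would rewrite the momentum equation to isolate $\partial_t v$, and for any test function $\phi\in\mathcal V$ form the pairing $\langle\partial_tv,\phi\rangle$. Integrating by parts moves derivatives off $v$ and onto $\phi$, so the dissipative terms contribute $\frac{1}{Re_1}(\nabla_Hv,\nabla_H\phi)+\frac{1}{Re_2}(\partial_zv,\partial_z\phi)$, which are bounded by $C\|\nabla v\|_{L^2}\|\phi\|_{H^1}$; the pressure term $\nabla_Hp_s$ and the temperature term $\nabla_H\int_{-h}^zT\,d\xi$ are handled using that $\phi\in\mathcal V$ is divergence-free in the averaged sense (so $p_s$ drops out) and by the $L^2$ bound on $\nabla_H T$. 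The crucial terms are the advection terms $(v\cdot\nabla_H)v$ and $\left(\int_{-h}^z\nabla_H\cdot v\,d\xi\right)\partial_z v$; after integration by parts these can be written in the form $\int v\otimes v:\nabla\phi$, and then estimated exactly as in the proof of Proposition \ref{pro3.5} via Lemma \ref{lemma1} together with Ladyzhenskaya's inequality, yielding a bound of the shape $C\|v\|_{L^2}^{1/2}\|\nabla v\|_{L^2}^{3/2}\|\phi\|_{H^1}$. Taking the supremum over $\|\phi\|_{H^1}\le1$ gives $\|\partial_tv\|_{\mathcal V^*}\le C(\|\nabla v\|_{L^2}+\|\nabla_HT\|_{L^2}+\|v\|_{L^2}+\|v\|_{L^2}^{1/2}\|\nabla v\|_{L^2}^{3/2})$.

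I would then raise this to the power $\frac43$ and integrate in time on $(0,\mathfrak T)$. The linear pieces present no difficulty since $\nabla v,\nabla_HT\in L^2(0,\mathfrak T;L^2)$ by Corollary \ref{cor5.1}, and $L^2\hookrightarrow L^{4/3}$ on a finite interval. The nonlinear piece produces $\int_0^{\mathfrak T}\|v\|_{L^2}^{2/3}\|\nabla v\|_{L^2}^2\,dt$, which is finite because $\|v\|_{L^2}\in L^\infty$ and $\|\nabla v\|_{L^2}\in L^2$ in time, again by Corollary \ref{cor5.1}; this is precisely why the exponent $\frac43$ works. The estimate for $\partial_tT$ proceeds identically by pairing \eqref{21} with $\psi\in H^1(\Omega)$: the diffusion term gives $\frac{1}{R_T}(\nabla_HT,\nabla_H\psi)$ plus the boundary contribution handled by the Robin condition, and the advection terms $v\cdot\nabla_HT-\left(\int_{-h}^z\nabla_H\cdot v\,d\xi\right)\partial_zT$ are bounded via Lemma \ref{lemma1} in terms of $\|\nabla_H v\|_{L^2}$, $\|\partial_z v\|_{L^2}$, $\|\nabla_H T\|_{L^2}$, and $\|\partial_zT\|_{L^2}$, all of which are controlled by Corollary \ref{cor5.1} with the same $L^{4/3}$-in-time integrability.

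The main obstacle I anticipate is bookkeeping for the nonlinear advection terms: one must integrate by parts carefully to avoid asking for second derivatives of $v$ or $T$ (which are only $t$-weighted in $L^2$ near $t=0$, hence not uniformly integrable), and instead keep everything in terms of the unweighted quantities $\nabla v,\nabla_HT,\partial_zv,\partial_zT$ supplied by Corollary \ref{cor5.1}. A secondary care point is the pressure term $p_s$: it must be eliminated using the constraint defining $\mathcal V$ rather than estimated directly, since $p_s$ itself carries no independent bound. Once these pairings are organized so that every factor matches an available norm in Corollary \ref{cor5.1} at the correct time-integrability, the conclusion follows by taking $\mathfrak T$-dependent constants and packaging them into the continuous functions $J_5$ and $J_6$.
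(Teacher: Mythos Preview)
Your overall strategy (pair with a test function, bound each pairing, invoke Corollary \ref{cor5.1}) is right, but there is a genuine gap in how you handle the vertical advection $w\partial_z v$ (and analogously $w\partial_z T$). After integration by parts using the 3D incompressibility, the nonlinear terms become
\[
-\int_\Omega v\cdot\big[(v\cdot\nabla_H)\phi\big]\,d\Omega-\int_\Omega w\,v\cdot\partial_z\phi\,d\Omega,
\]
not simply ``$\int v\otimes v:\nabla\phi$'': the vertical velocity $w=-\int_{-h}^z\nabla_H\cdot v\,d\xi$ scales like $\nabla_H v$, not like $v$. For the second piece, the claimed bound $C\|v\|_{L^2}^{1/2}\|\nabla v\|_{L^2}^{3/2}\|\phi\|_{H^1}$ is not attainable using only first derivatives of $v$. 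Whether you proceed via Lemma \ref{lemma1} or via direct H\"older--Sobolev estimates, you are forced to use one of $\nabla_H^2 v$ (only $t$-weighted near $t=0$, hence unusable), $\nabla_H\partial_z\phi$ (unavailable for $\phi\in\mathcal V$), or $\nabla_H\partial_z v$. The same obstruction appears for the term $w\partial_z T$ in the temperature equation, where $\nabla_H\partial_z T$ is needed. Thus the specific time-integrability computation you wrote down, $\int_0^{\mathfrak T}\|v\|_{L^2}^{2/3}\|\nabla v\|_{L^2}^{2}\,dt$, rests on a bound that does not hold.

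The paper's proof resolves this differently: it does \emph{not} integrate the advection terms by parts. Instead it uses the pointwise bound $|v(\cdot,z)|\le \tfrac{1}{h}\int_{-h}^0|v|\,d\xi+\int_{-h}^0|\partial_z v|\,d\xi$ and applies Lemma \ref{lemma1} directly to the resulting triple products. This produces factors of the form $\|\partial_z v\|_{L^2}^{1/2}\|(\partial_z v,\nabla_H\partial_z v)\|_{L^2}^{1/2}$ (and analogously $\|\partial_z T\|_{L^2}^{1/2}\|(\partial_z T,\nabla_H\partial_z T)\|_{L^2}^{1/2}$), which do involve the mixed second-order quantity $\nabla_H\partial_z v$---but this is exactly the unweighted $L^2(0,\mathfrak T;L^2)$ information that Corollary \ref{cor5.1} supplies beyond the merely $t$-weighted $\nabla_H^2$ bounds. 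So your instinct to ``avoid second derivatives'' was miscalibrated: the mixed derivatives $\nabla_H\partial_z v$ and $\nabla_H\partial_z T$ are precisely the resource that must be spent here. The $L^{4/3}$ time exponent then comes from the product structure $L^\infty_t\cdot L^4_t\cdot L^2_t$ of the three factors $\|\partial_z v\|_{L^2}^{1/2}$, $\|(\partial_z v,\nabla_H\partial_z v)\|_{L^2}^{1/2}$, and $\|\nabla_H v\|_{L^2}$, paired with $\psi\in L^4(0,\mathfrak T;\mathcal V)$.
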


\begin{proof}
Taking arbitrary $\psi \in L^4(0,\mathfrak{T};\mathcal V)$ as a testing function in \eqref{19}, it follows from integrating by parts that
\begin{equation*}
\begin{aligned}
\int_0^{\mathfrak{T}}(\partial_tv,\psi)_{L^2}dt=&-\int_0^{\mathfrak{T}}\frac{1}{Re_2}(\nabla_Hv,\nabla_H\psi)_{L^2}dt-
\frac{1}{Re_2}\int_0^{\mathfrak{T}}(\partial_zv,\partial_z\psi)_{L^2}dt\\
&-\int_0^{\mathfrak{T}}\big((v\cdot\nabla_H)v,\psi\big)_{L^2}dt+\int_0^{\mathfrak{T}}\left(\int_{-h}^z\nabla_H\cdot vd\xi\partial_z v,\psi\right)_{L^2}dt\\
&-\int_0^{\mathfrak{T}}(fk\times v,\psi)_{L^2}dt
+\int_0^{\mathfrak{T}}\left(\nabla_H\int_{-h}^zTd\xi,\psi\right)_{L^2}dt:=\sum_{i=1}^6I_i.
\end{aligned}
\end{equation*}
By the H\"older inequality, one has
\begin{equation*}
|I_1+I_2|\leq C\|\nabla v\|_{L^2(L^2)}\|\nabla\psi\|_{L^2(L^2)},
\end{equation*}
where we use $L^p(X)$ to denote $L^p(0,\mathfrak{T};X)$ for some Banach space $X$. It follows from Lemma \ref{lemma1} that
\begin{equation*}
\begin{aligned}
&|I_3+I_4|\\
\leq& C\int_0^{\mathfrak{T}}\int_\Omega \Bigg[\int_{-h}^0|\nabla_Hv|dz|\partial_zv||\psi| +\int_{-h}^0\Big(|\partial_zv|+|v|\Big)dz|\nabla_Hv||\psi|\Bigg]d\Omega dt\\
\leq& C\int_0^{\mathfrak{T}}\Big(\|\partial_zv\|_{L^2}^{\frac{1}{2}}\|(\partial_zv,\nabla_H\partial_zv)\|_{L^2}^{\frac{1}{2}}
 +\|v\|_{L^2}^{\frac{1}{2}}\|(v,\nabla_H v)\|_{L^2}^{\frac{1}{2}}\Big)\\
& \times\|\nabla_Hv \|_{L^2}\|\psi\|_{L^2}^{\frac{1}{2}} \|(\psi,\nabla_H\psi)\|_{L^2}^{\frac{1}{2}} dt\\
\leq& C\Big(\|v\|_{L^\infty(L^2)}^{\frac{1}{2}} \|(v,\nabla_Hv)\|_{L^2(L^2)}^{\frac{1}{2}}
+\|\partial_zv\|_{L^\infty(L^2)}^{\frac{1}{2}}\|(\partial_zv,\partial_z\nabla_Hv)\|_{L^2(L^2)}^{\frac{1}{2}}\Big)\\
&  \times\|\nabla_H v\|_{L^2(L^2)} \|(\psi,\nabla_H\psi)\|_{L^4(L^2)}.
\end{aligned}
\end{equation*}
Applying the H\"older inequality yields
\begin{equation*}
\begin{aligned}
|I_5+I_6|\leq C\left(\|v\|_{L^2(L^2)}+\|\nabla_HT\|_{L^2(L^2)}\right)\|\psi\|_{L^2(L^2)}.
\end{aligned}
\end{equation*}
Thus, by Corollary \ref{cor5.1}, one gets
\begin{equation*}
\left|\int_0^{\mathfrak{T}}(\partial_tv,\psi)_{L^2}dt\right|\leq J_5(\mathfrak{T})\|\psi\|_{L^4(\mathcal V)},\quad
\forall\psi\in L^4(0,\mathfrak{T};\mathcal V)
\end{equation*}
and, as a result, $\|\partial_tv\|_{L^{\frac{4}{3}}(0,\mathfrak{T};\mathcal V^*)}\leq J_5(\mathfrak{T}).$

Similarly, taking arbitrary $\phi\in L^4(0,\mathfrak{T};H^1(\Omega))$ as a testing function in \eqref{21}, it follows from integrating by parts that
\begin{equation*}
\begin{aligned}
\int_0^{\mathfrak{T}}(\partial_tT,\phi)_{L^2}dt=&\int_0^{\mathfrak{T}}\left[-\frac{1}{R_T}(\nabla_HT,\nabla_H \phi)_{L^2}-\frac{\alpha_T}{R_T}(T,\phi)_{L^2(\Gamma_s)}-\varepsilon(\partial_zT,\partial_z\phi)_{L^2}\right]dt\\
&+\int_0^{\mathfrak{T}}\left[-(v\cdot\nabla_HT,\phi)_{L^2} +\left(\int_{-h}^z\nabla_H\cdot vd\xi\partial_z T,\phi\right)_{L^2}\right]dt.
\end{aligned}
\end{equation*}
By the H\"older inequality, one has
\begin{equation*}
\begin{aligned}
&\left|\int_0^{\mathfrak{T}}\left[\frac{1}{R_T}(\nabla_HT,\nabla_H\phi)_{L^2}+\varepsilon(\partial_zT,\partial_z\phi)_{L^2}\right]dt\right|\\
\leq&\frac{1}{R_T}\|\nabla_HT\|_{L^2(L^2)}\|\nabla_H\phi\|_{L^2(L^2)}+\varepsilon\|\partial_zT\|_{L^2(L^2)}\|\partial_z\phi\|_{L^2(L^2)}.
\end{aligned}
\end{equation*}
Applying the trace inequality leads to
\begin{equation*}
\begin{aligned}
&\left|\int_0^{\mathfrak{T}}\frac{\alpha_T}{R_T}(T,\phi)_{L^2(\Gamma_s)}dt\right|\leq C\|T\|_{L^2(L^2(\Gamma_s))}\|\phi\|_{L^2(L^2(\Gamma_s))}\\
\leq &C\left(\|T\|_{L^2(L^2)}+\|\nabla_HT\|_{L^2(L^2)}\right)\left(\|\phi\|_{L^2(L^2)}+\|\nabla_H\phi\|_{L^2(L^2)}\right).
\end{aligned}
\end{equation*}
By Lemma \ref{lemma1}, the nonlinear terms can be estimated as
\begin{equation*}
\begin{aligned}
&\left|\int_0^{\mathfrak{T}}\left[(v\cdot\nabla_HT,\phi)_{L^2}-\left(\int_{-h}^z\nabla_H\cdot vd\xi\partial_zT,\phi\right)_{L^2}\right]dt\right|\\
\leq&C\int_0^{\mathfrak{T}}\Bigg[\int_M\int_{-h}^0\Big(|v|+|\partial_zv|\Big)dz\int_{-h}^0|\nabla_HT||\phi|dzdxdy\\
& +\int_M\int_{-h}^0|\nabla_Hv|dz\int_{-h}^0|\partial_zT||\phi|dzdxdy\Bigg]dt\\
\leq&C\int_0^{\mathfrak{T}} \Big(\|\nabla_HT\|_{L^2}\|(v,\partial_zv)\|_{L^2}^{\frac{1}{2}}
\|(v,\partial_zv,\nabla_Hv,\nabla_H\partial_zv)\|_{L^2}^{\frac{1}{2}} \\
&+\|\nabla_Hv\|_{L^2}\|\partial_zT\|_{L^2}^{\frac{1}{2}}\|(\partial_zT,\nabla_H\partial_zT)\|_{L^2}^{\frac{1}{2}}\Big) \|\phi\|_{L^2}^{\frac{1}{2}}\|(\phi,\nabla_H\phi)\|_{L^2}^{\frac{1}{2}} dt\\
\leq&C\Big(\|\nabla_HT\|_{L^2(L^2)}\|(v,\partial_zv)\|_{L^\infty(L^2)}^{\frac{1}{2}}
\|(v,\partial_zv,\nabla_Hv,\nabla_H\partial_zv)\|_{L^2(L^2)}^{\frac{1}{2}}\\
&+\|\nabla_Hv\|_{L^2(L^2)}\|\partial_zT\|_{L^\infty(L^2)}^{\frac{1}{2}}
\|(\partial_zT,\nabla_H\partial_zT)\|_{L^2(L^2)}^{\frac{1}{2}}\Big)\|(\phi,\nabla_H\phi)\|_{L^4(L^2)}.
\end{aligned}
\end{equation*}
By Corollary \ref{cor5.1} and combining the above estimates, one has
\begin{equation*}
\left|\int_0^{\mathfrak{T}}(\partial_tT,\phi)_{L^2}dt\right|\leq J_6(\mathfrak{T})\|\phi\|_{L^4(H^1)},\quad \forall\phi\in L^4(0,\mathfrak{T};H^1(\Omega)),
\end{equation*}
and, thus, $\|\partial_tT\|_{L^{\frac{4}{3}}(0,\mathfrak{T};(H^1(\Omega))^*)}\leq J_6(\mathfrak{T}).$
\end{proof}

\section{Proof of Theorem \ref{thm1}}

We are now ready to give the proof of Theorem \ref{thm1}

\begin{proof}[\textbf{Proof of Theorem \ref{thm1}}]
\textbf{(i) Existence.}
Take a sequence $\{(v_{0n},T_{0n})\}_{n=1}^\infty \subseteq \mathcal V\times H^1(\Omega)$, such that
\begin{equation*}
(v_{0n},T_{0n},\partial_zv_{0n},\partial_zT_{0n})\rightarrow(v_0,T_0,\partial_zv_0,\partial_zT_0)\quad \textrm{in}~L^2(\Omega),  \quad\textrm{ as}~n\rightarrow \infty.
\end{equation*}
Let $\delta_0$ be the constant stated in Proposition \ref{pro4.1} and choose $r_0\in(0,1)$ sufficiently small, such that
\begin{equation*}
\sup_{X\in M}\int_{-h}^0\int_{D_{2r_0}(X)\cap M}|\partial_zv_0|^2dxdydz\leq \frac{\delta_0^2}{2}.
\end{equation*}
Since $\partial_zv_{0n}\rightarrow \partial_zv_0$ in $L^2(\Omega)$, as $n\rightarrow\infty$, there exists an integer
$N_0$, such that
\begin{equation*}
\sup_{X\in M}\int_{-h}^0\int_{D_{2r_0}(X)\cap M}|
\partial_zv_{0n}|^2dxdydz\leq \delta_0^2,\quad\forall n\geq N_0.
\end{equation*}

By Theorem \ref{thm3.1}, for each $n$, there is a unique global solution $(v_n,T_n)$ to system (\ref{19})--(\ref{21}), with $\varepsilon=\frac{1}{n^2}$, subject to (\ref{23})--(\ref{24}), and with initial data $(v_{0n},T_{0n})$. By Corollary \ref{cor5.1} and Proposition \ref{pro5.1}, the following estimates hold
\begin{eqnarray*}
\sup_{0\leq t\leq \mathfrak{T}} \|(v_{n},T_n,\partial_zv_n,\partial_zT_n)\|_{L^2}^2
+\int_{0}^{\mathfrak{T}} \|(\nabla v_n,\nabla\partial_zv_n,\nabla_HT_n,\nabla_H\partial_zT_n)\|_{L^2}^2 dt\leq C,\\
\sup_{0\leq t\leq \mathfrak{T}}\Big(t\|(\nabla_Hv_n,\nabla_HT_n)\|_{L^2}^2\Big)+
\int_{0}^{\mathfrak{T}} t\|(\nabla_H^2v_n,\nabla_H^2T_n,\partial_tv_n,\partial_t
T_n)\|_{L^2}^2 dt\leq C,\label{69}
\end{eqnarray*}
and
$$
\|\partial_tv_n\|_{L^\frac43(0,\mathfrak T; \mathcal V^*)}+\|\partial_tT\|_{L^\frac43(0,\mathfrak T; (H^1(\Omega))^*)}
\leq C,
$$
for any $n\geq N_0$ and for a positive constant $C$ independent of $n\geq N_0$.

Thanks to the above estimates, by the Aubin--Lions lemma, i.e., Lemma \ref{lemma2}, and by Cantor's diagonal arguments,
there exists a pair $(v,T)$ and a subsequence of $\{(v_n,T_n)\}_{n=1}^\infty$, still denoted by $\{(v_n,T_n)\}_{n=1}^\infty$, such that
\begin{equation*}
\begin{aligned}
&(v_n,\partial_zv_n,T_n,\partial_z T_n)\stackrel{\ast}{\rightharpoonup}(v,\partial_zv,T,\partial_zT)\quad \textrm{in}~L^\infty(0,\mathfrak{T};L^2(\Omega)),\\
&(\nabla v_n,\nabla\partial_zv_n)
\rightharpoonup(\nabla v, \nabla\partial_zv)\quad\textrm{in}~L^2(\Omega\times(0,\mathfrak{T})),\\
&(\nabla_HT_n,\nabla_H\partial_zT_n)
\rightharpoonup(\nabla_HT,\nabla_H\partial_zT)\quad\textrm{in}~L^2(\Omega\times(0,\mathfrak{T})),\\
&\sqrt t(\nabla v_n,\nabla T_n)\stackrel{\ast}{\rightharpoonup}\sqrt t(\nabla v,\nabla T)\quad\textrm{in}~L^\infty(0,\mathfrak{T};L^2(\Omega)),\\
&\sqrt t(\nabla_H^2v_n,\nabla_H^2T_n)\rightharpoonup \sqrt t(\nabla_H^2v,\nabla_H^2T)\quad \textrm{in}~L^2(\Omega\times(0,\mathfrak{T})),\\
&\sqrt t(\partial_tv_n,\partial_tT_n)\rightharpoonup \sqrt t(\partial_t v,\partial_tT)\quad \textrm{in}~L^2(\Omega\times(0,\mathfrak{T})),\\
& v_n\rightarrow v\quad \textrm{in}~C([\delta,\mathfrak{T}];\mathcal H)\cap C([0,\mathfrak T]; \mathcal V^*),\\
&T_n\rightarrow T\quad \textrm{in}~C([\delta,\mathfrak{T}];L^2(\Omega))\cap C([0,\mathfrak T]; (H^1(\Omega))^*),
\end{aligned}
\end{equation*}
as $n\rightarrow\infty$, for any $0<\delta<\mathfrak{T}<\infty$, where $\rightharpoonup$ and
$\stackrel{\ast}{\rightharpoonup}$ are the weak and weak-$*$ convergence, respectively. With the aid of the above
strong and weak convergence, one can take the limit, as $n\rightarrow\infty$, to show that $(v,T)$ satisfies the boundary
conditions (\ref{73})--(\ref{75}) in the sense of trace, satisfies equations (\ref{16})--(\ref{17}) a.e.\,in
$\Omega\times(0,\infty)$, and fulfills the initial condition (\ref{8}).
By the weakly lower semi-continuity of norms, one can verify from the above convergence that $(v,T)$ satisfies all the
regularities stated in Definition \ref{def} except that the regularities
$v\in C([0,\mathfrak T]; \mathcal H)$ and $T\in C([0,\mathfrak T]; L^2(\Omega))$ are replaced by the weaker ones: $v\in
C((0,\mathfrak T]; \mathcal H)\cap C([0,\mathfrak T]; \mathcal V^*)$ and $T\in C((0,\mathfrak T]; L^2(\Omega))\cap
C([0,\mathfrak T]; (H^1(\Omega))^*)$.
Therefore, in order to complete the proof of the existence part of Theorem \ref{thm1}, it remains to verify that
$$
(v,T)\rightarrow(v_0,T_0)\quad\mbox{ in }L^2(\Omega),\mbox{ as } t\rightarrow0.
$$

Note that $\mathcal V\hookrightarrow L^2(\Omega)$ and $H^1(\Omega)\hookrightarrow L^2(\Omega)$, one has 
$$
L^2(\Omega)=(L^2(\Omega))^*\hookrightarrow\mathcal V^*\quad\mbox{and}\quad 
L^2(\Omega)\hookrightarrow (H^1(\Omega))^*.
$$ 
Thanks to
these, recalling $v\in C([0,\mathfrak T]; \mathcal V^*)$, $T\in C([0,\mathfrak T]; (H^1(\Omega))^*)$, and $(v,T)\in L^\infty(0,\mathfrak T; L^2(\Omega))$, it follows from Lemma \ref{lemmaADLI} that
$(v,T)\in C_w([0,\mathfrak T]; L^2(\Omega))$, where $C_w$ means the weak continuity. With the aid of this, it suffices to show that
$$
\varlimsup_{t\rightarrow0}(\|v\|_{L^2}^2+\|T\|_{L^2}^2)(t)\leq \|v_0\|_{L^2}^2+\|T_0\|_{L^2}^2.
$$
This can be verified as follows: Proposition \ref{pro3.1} implies
$$
(\|v_n\|_{L^2}^2+\|T_n\|_{L^2}^2)(t)\leq e^{Ct}(\|v_{0n}\|_{L^2}^2+\|T_{0n}\|_{L^2}^2),\quad\forall t>0,
$$
from which, recalling that $(v_n,T_n)\rightarrow (v,T)$ in $C([\delta,\mathfrak{T}];L^2(\Omega))$ for any $\delta\in(0,\mathfrak T)$ and $(v_{0n}, T_{0n})\rightarrow(v_0,T_0)$ in $L^2(\Omega)$, one can take the limit $n\rightarrow\infty$ in the above to get $(\|v\|_{L^2}^2+\|T\|_{L^2}^2)(t)  \leq e^{Ct}(\|v_0\|_{L^2}^2+\|T_0\|_{L^2}^2)$, leading to the desired
inequality.

\textbf{(ii) Uniqueness and continuous dependence.}
Let $(v_1,T_1)$ and $(v_2,T_2)$ be two $z$-weak solutions to system \eqref{16}--\eqref{17}, subject to \eqref{73}--\eqref{75}, with initial data $(v_{10},T_{10})$ and $(v_{20},T_{20})$, respectively. Denote $\omega:=v_1-v_2$, $\theta :=T_1-T_2$, $q:=p_{s1}-p_{s2}$, $\omega_0:=v_{10}-v_{20}$ and $\theta_0:=T_{10}-T_{20}$. Then,
\begin{align}
\partial_t\omega&+(v_2\cdot\nabla_H) \omega-\left(\int_{-h}^z\nabla_H\cdot v_2d\xi\right)\partial_z\omega+(\omega\cdot\nabla_H) v_1-\left(\int_{-h}^z\nabla_H\cdot \omega d\xi\right)\partial_zv_1
\nonumber\\
&+\nabla_Hq-\nabla_H\int_{-h}^z\theta d\xi+fk\times \omega-\frac{1}{Re_1}\Delta_H \omega-\frac{1}{Re_2}\partial_z^2 \omega=0,\label{DIFF1}
\end{align}
and
\begin{equation}\label{DIFF2}
\begin{aligned}
\partial_t \theta+v_2\cdot\nabla_H \theta&-\left(\int_{-h}^z\nabla_H\cdot v_2d\xi\right)\partial_z\theta+\omega\cdot\nabla_H T_1\\
&-\left(\int_{-h}^z\nabla_H\cdot \omega d\xi\right)\partial_z T_1-\frac{1}{R_T}\Delta_H \theta=0,
\end{aligned}
\end{equation}
a.e.\,in $\Omega\times(0,\infty)$, and the following boundary conditions hold:
\begin{equation*}
\begin{aligned}
&\textrm{on}~\Gamma_s:~\partial_n \theta=-\alpha_T\theta,~\omega=0,\\
&\textrm{on}~\Gamma_u:~\partial_z \omega=0,\\
&\textrm{on}~\Gamma_b:~\partial_z \omega=0.
\end{aligned}
\end{equation*}
Multiplying (\ref{DIFF1}) and (\ref{DIFF2}) with $\omega$ and $\theta$, respectively, it follows from Lemma \ref{lemma1} and integrating by parts that
\begin{align*}
& \frac{1}{2}\frac{d}{dt}\|\omega\|_{L^2}^2+\frac{1}{Re_1}\|\nabla_H \omega\|_{L^2}^2+\frac{1}{Re_2}\|\partial_z\omega\|_{L^2}^2\\
=&-\big((\omega\cdot\nabla_H)v_1,\omega\big)_{L^2}+\left(\int_{-h}^z\nabla_H\cdot \omega d\xi\partial_z v_1,\omega\right)_{L^2}+\left(\nabla_H\int_{-h}^z\theta d\xi,\omega\right)_{L^2}\\
\leq& C\int_M\int_{-h}^0\Big(|\nabla_H\partial_zv_1|+|\nabla_Hv_1|\Big)dz\int_{-h}^0|\omega|^2dzdxdy\\
& +\int_M\int_{-h}^0|\nabla_H\omega|dz\int_{-h}^0|\partial_zv_1||\omega|dzdxdy
+\int_M\int_{-h}^0|\nabla_H\theta|dz\int_{-h}^0|\omega|dzdxdy\\
\leq& C\|(\nabla_H v_1,\partial_z\nabla_H v_1)\|_{L^2}\big)\|\omega\|_{L^2}\|(\omega,\nabla_H\omega)\|_{L^2}+C\|\nabla_H\theta\|_{L^2}\|\omega\|_{L^2}\\
& +C\|\nabla_H\omega\|_{L^2}\|\partial_z v_1\|_{L^2}^{\frac{1}{2}}\|(\partial_z v_1,\partial_z\nabla_Hv_1)\|_{L^2}^{\frac{1}{2}}\|\omega\|_{L^2}^{\frac{1}{2}}\|(\omega,\nabla_H\omega)\|_{L^2}
^{\frac{1}{2}}\\
\leq& C\big(1+\|(\nabla_H v_1,\partial_z\nabla_Hv_1)\|_{L^2}^2+\|\partial_z v_1\|_{L^2}^4+\|\partial_z v_1\|_{L^2}^2\|\partial_z\nabla_Hv_1\|_{L^2}^2\big)\|\omega\|_{L^2}^2\\
& +\frac{1}{4Re_1}\|\nabla_H \omega\|_{L^2}^2+\frac{1}{4R_T}\|\nabla_H\theta\|_{L^2}^2,
\end{align*}
where we have used $|f|\leq \int_{-h}^0|\partial_zf|dz+\frac{1}{h}\int_{-h}^0|f|dz$. Similarly,
\begin{align*}
& \frac{1}{2}\frac{d}{dt}\|\theta\|_{L^2}^2+\frac{1}{R_T}\|\nabla_H \theta\|_{L^2}^2+\frac{\alpha_T}{R_T}\|\theta\|_{L^2(\Gamma_s)}^2\\
=&-(\omega\cdot\nabla_HT_1,\theta)_{L^2}+\left(\int_{-h}^z\nabla_H\cdot \omega d\xi\partial_z T_1,\theta\right)_{L^2}\\
\leq& C\int_M\int_{-h}^0\Big(|\nabla_HT_1|+|\nabla_H\partial_zT_1|\Big)dz\int_{-h}^0|\omega||\theta|dzdxdy\\
 &+C\int_M\int_{-h}^0|\nabla_H\omega|dz\int_{-h}^0|\partial_zT_1||\theta|dzdxdy\\
\leq &C\|(\nabla_HT_1,\nabla_H\partial_zT_1)\|_{L^2} \|\omega\|_{L^2}^{\frac{1}{2}}\|(\omega,\nabla_H\omega)\|_{L^2}^{\frac{1}{2}}  \|\theta\|_{L^2}^{\frac{1}{2}}\|(\theta,\nabla_H\theta)\|_{L^2}^{\frac{1}{2}}\\
&+C\|\nabla_H\omega\|_{L^2}\|\partial_zT_1\|_{L^2}^{\frac{1}{2}}\|(\partial_zT_1,\nabla_H\partial_zT_1)\|_{L^2}^{\frac{1}{2}}
\|\theta\|_{L^2}^{\frac{1}{2}}\|(\theta,\nabla_H\theta)\|_{L^2}^{\frac{1}{2}}\\
\leq& C\big(1+\|(\nabla_HT_1,\partial_z\nabla_HT_1)\|_{L^2}^2+\|\partial_zT_1\|_{L^2}^4+\|\partial_zT_1\|_{L^2}^2\|\partial_z\nabla_HT_1\|_{L^2}^2\big)\\
& \times\|(\theta,\omega)\|_{L^2}^2+\frac{1}{4Re_1}\|\nabla_H\omega\|_{L^2}^2+\frac{1}{4R_T}\|\nabla_H\theta\|_{L^2}^2.
\end{align*}
Combining the above estimates, one has
\begin{equation*}
\begin{aligned}
\frac{d}{dt}\Big(\|\omega\|_{L^2}^2+\|\theta\|_{L^2}^2\Big)&+\frac{1}{Re_1}\|\nabla_H\omega\|_{L^2}^2+\frac{1}{Re_2}\|\partial_z\omega\|_{L^2}^2
+\frac{1}{R_T}\|\nabla_H\theta\|_{L^2}^2\\
\leq& C\big(G_1(t)+G_2(t)\big)\big(\|\omega\|_{L^2}^2+\|\theta\|_{L^2}^2\big),
\end{aligned}
\end{equation*}
where
\begin{equation*}
\begin{aligned}
&G_1(t):=1+\|(\nabla_HT_1,\partial_z\nabla_HT_1)\|_{L^2}^2+\|\partial_zT_1\|_{L^2}^4+\|\partial_zT_1\|_{L^2}^2\|\partial_z\nabla_HT_1\|_{L^2}^2,\\
&G_2(t):=1+\|(\nabla_H v_1,\partial_z\nabla_Hv_1)\|_{L^2}^2+\|\partial_z v_1\|_{L^2}^4+\|\partial_z v_1\|_{L^2}^2\|\partial_z\nabla_Hv_1\|_{L^2}^2.
\end{aligned}
\end{equation*}
Note that the regularities of $(v_i,T_i)$ imply that $G_1,G_2\in L^1((0,\mathfrak T))$ for any $\mathfrak T>0$.
Then, for any $0<s<\mathfrak{T}<\infty$, applying the Gr\"onwall inequality on $t\in[s,\mathfrak{T}]$, one can obtain
\begin{eqnarray*}
\sup_{s\leq t\leq \mathfrak{T}}\|(\omega,\theta)\|_{L^2}^2(t)+\int_s^{\mathfrak{T}} \|(\nabla\omega,\nabla_H\theta)\|_{L^2}^2
 dt
\leq e^{C\int_s^{\mathfrak{T}}\big(G_1(t)+G_2(t)\big)dt}\|(\omega,\theta)\|_{L^2}^2(s),
\end{eqnarray*}
from which, letting $s\rightarrow0$, one gets
\begin{eqnarray*}
\sup_{0\leq t\leq \mathfrak{T}}\|(\omega,\theta)\|_{L^2}^2(t)+\int_0^{\mathfrak{T}} \|(\nabla\omega,\nabla_H\theta)\|_{L^2}^2
 dt
\leq e^{C\int_0^{\mathfrak{T}}\big(G_1(t)+G_2(t)\big)dt}\|(\omega_0,\theta_0)\|_{L^2}^2.
\end{eqnarray*}
This leads to the uniqueness and continuous dependence on the initial datum.
\end{proof}

\section{Appendix}
It is shown in this appendix that one can homogenize the nonhomogeneous boundary conditions at the cost of having some
extra linear terms in the resulting equations. We only give the details about the
system with full dissipation, that is, system \eqref{19}--\eqref{21}; however, calculations for the system (\ref{16})--(\ref{17}) are similar. The following boundary conditions are complemented to system (\ref{19})--(\ref{21}):
\begin{equation*}
\begin{aligned}
&\textrm{on}~\Gamma_s:~\partial_n T=\alpha_T(T_{s}-T),~v=0,\\
&\textrm{on}~\Gamma_u:~\partial_z v=-\alpha_v\tau(x,y),~\partial_z T=0,\\
&\textrm{on}~\Gamma_b:~\partial_z v=0,~\partial_z T=0.
\end{aligned}
\end{equation*}

Let $V=v+\frac{\alpha_v}{h}\left(\frac{(z+h)^2}{2}-\frac{h^2}{6}\right)\tau(x,y)$. Then, $\partial_z V=\partial_z v+\frac{\alpha_v}{h}(z+h)\tau$ and, thus, $\partial_z V=0$ on $\Gamma_b$ and $\Gamma_u$. By \eqref{12}, one can check that

\begin{equation*}
V\Big|_{\Gamma_s}=v|_{\Gamma_s}+\frac{\alpha_v}{h}\left(\frac{(z+h)^2}{2}-\frac{h^2}{6}\right)\tau|_{\Gamma_s}=0\qquad\textrm{on}~\Gamma_s.
\end{equation*}
Obviously, one has
\begin{equation*}
\nabla_H\cdot\int_{-h}^0Vdz=0.
\end{equation*}

Let $T^*$ be the unique solution to
\begin{equation*}
\begin{aligned}
&\partial_tT^*-\Delta T^*=0\qquad\qquad\ \ \ \textrm{in}~\Omega,\\
&\partial_n T^*+\alpha_T T^*=\alpha_T T_{s}~\qquad \textrm{on}~\Gamma_s,\\
&\partial_z T^*=0~\qquad\qquad\qquad\textrm{on}~\Gamma_u~\textrm{and}~\Gamma_b,\\
&T|_{t=0}=0.
\end{aligned}
\end{equation*}
Existence, uniqueness and regularity of such $T^*$ has been established in \cite{Hit1}.
Set $\mathcal{T}:=T-T^*$. Then, it is clear that
\begin{equation*}
\partial_n \mathcal{T}|_{\Gamma_s}=\partial_n T|_{\Gamma_s}-\partial_n T^*|_{\Gamma_s}=-\alpha_T \mathcal{T}|_{\Gamma_s} 
\end{equation*}
and
\begin{equation*}
\partial_z\mathcal{T}=0\quad \textrm{on}~\Gamma_u\cup\Gamma_b.
\end{equation*}

Substituting $v=V-\frac{\alpha_v}{h}\left(\frac{(z+h)^2}{2}-\frac{h^2}{6}\right)\tau$ and $T=T^*+\mathcal{T}$ into \eqref{19}--\eqref{21} yields
\begin{equation*}
\begin{aligned}
\partial_t V-\frac{1}{Re_1}\Delta_H V&-\frac{1}{Re_2}\partial_z^2 V+\big(V\cdot\nabla_H\big) V-\left(\int_{-h}^z\nabla_H\cdot Vd\xi\right)\partial_z V\\
&+\nabla_H p_s-\nabla_H\int_{-h}^z\mathcal{T}d\xi+fk\times V+a_\tau(V)=F_\tau,
\end{aligned}
\end{equation*}
\begin{equation*}
\nabla_H\cdot\int_{-h}^0Vdz=0,
\end{equation*}
\begin{equation*}
\partial_t\mathcal{T}-\frac{1}{R_T}\Delta_H\mathcal{T}-\varepsilon\partial_z^2\mathcal{T}+V\cdot\nabla_H\mathcal{T}-\left(\int_{-h}^z\nabla_H\cdot Vd\xi\right)\partial_z \mathcal{T}+b(V,\mathcal{T})=G_\tau,
\end{equation*}
where
\begin{align*}
a_\tau(V)=&-\frac{\alpha_v}{h}\left(\frac{(z+h)^2}{2}-\frac{h^2}{6}\right)\Big[\big(V\cdot\nabla_H\big)\tau+
\big(\tau\cdot\nabla_H\big) V\Big]\\
&+\frac{\alpha_v}{6h}\Big((z+h)^3-h^2(z+h)\Big)(\nabla_H\cdot\tau)\partial_z V +\left(\int_{-h}^z\nabla_H\cdot Vd\xi\right)\frac{\alpha_v(z+h)}{h}\tau,\\
b(V,\mathcal{T})=&-\left(\int_{-h}^z\nabla_H\cdot Vd\xi\right)\partial_z T^*-\frac{\alpha_v}{h}\left(\frac{(z+h)^2}{2}-\frac{h^2}{6}\right)(\tau\cdot\nabla_H)\mathcal{T}\\
&+\frac{\alpha_v}{6h}\Big((z+h)^3-h^2(z+h)\Big)(\nabla_H\cdot\tau)\partial_z\mathcal{T}+V\cdot\nabla_H T^*,
\end{align*}
and
\begin{align*}
F_\tau=&\frac{\alpha_v}{h}\left(\frac{(z+h)^2}{2}-\frac{h^2}{6}\right)(fk\times\tau+\partial_t\tau)
-\frac{\alpha_v^2}{h^2}\left(\frac{(z+h)^2}{2}-\frac{h^2}{6}\right)^2(\tau\cdot\nabla_H)\tau\\
&-\frac{\alpha_v}{Re_1h}\left(\frac{(z+h)^2}{2}-\frac{h^2}{6}\right)\Delta_H\tau -\frac{\alpha_v\tau}{Re_2h}+\nabla_H\int_{-h}^zT^*d\xi\\
&+\frac{\alpha_v^2}{6h^2}\Big((z+h)^4-h^2(z+h)^2\Big)(\nabla_H\cdot\tau)\tau,\\
G_\tau=&\frac{\alpha_v}{h}\left(\frac{(z+h)^2}{2}-\frac{h^2}{6}\right)\tau\cdot\nabla_H T^*-\frac{\alpha_v}{6h}\Big((z+h)^3-h^2(z+h)\Big)(\nabla_H\cdot\tau)\partial_zT^*\\
&-\partial_t T^*+\frac{1}{R_T}\Delta_H T^*+\varepsilon\partial_z^2T^*.
\end{align*}

The initial and boundary conditions read as
\begin{equation*}
\big(V,\mathcal{T}\big)\Big|_{t=0}=\left(v_0+\frac{\alpha_v}{h}\left(\frac{(z+h)^2}{2}-\frac{h^2}{6}\right)
\tau(x,y,0),T_0\right) 
\end{equation*}
and
\begin{equation*}
\begin{aligned}
\textrm{on}~\Gamma_s:\ \  &V=0,~\partial_n\mathcal{T}+\alpha_T\mathcal{T}=0,
\end{aligned}
\end{equation*}
\begin{equation*}
\textrm{on}~\Gamma_u\cap\Gamma_b:\  \partial_z V=0,~\partial_z\mathcal{T}=0.
\end{equation*}

Following the same arguments as in this paper, one can check that there is no difficulty to deal with the linear terms $a_\tau(V)$, $b(V,\mathcal{T})$, $F_\tau$, and $G_\tau$, if $\tau$ and $T_s$ are smooth enough and, as a result, Theorem \ref{thm1} still holds true for the corresponding nonhomogeneous boundary conditions.

\smallskip
{\bf Acknowledgment.}
This work was supported in part by the National
Natural Science Foundation of China (11971009, 11871005, and 11771156) and by
the Guangdong Basic and Applied Basic Research Foundation (2019A1515011621,
2020B1515310005, 2020B1515310002, and 2021A1515010247).

\bigskip
%\newpage
%\newpage

\end{document}